\numberwithin{equation}{section}
\newtheorem{thrm}{Theorem}[section]
\newtheorem{lmm}[thrm]{Lemma}
\newtheorem{crllr}[thrm]{Corollary}
\newtheorem{prpstn}[thrm]{Proposition}
\theoremstyle{definition}
\newtheorem{rmrk}[thrm]{Remark}
\newcommand\R{{\mathbb R}}
\newcommand\Comp{{\mathrm{c}}}
\newcommand\Ens{{\mathcal E}}
\newcommand\dist{{\mathrm{d}}}
\newcommand\Supp{{\mathrm{supp}}\, }
\newcommand\Md{{\mathcal M}}
\newcommand\Kernel{{\mathcal K}}
\newcommand\CSTu{\boldsymbol {A}}
\newcommand\CSTtu{\boldsymbol {B}_1}
\newcommand\CSTtd{\boldsymbol {B}_2}
\newcommand\KIR{1 _{ \{  |y|<r\} } }
\newcommand\CSTK{\boldsymbol {M}}
\newcommand\Loc{{\mathrm{loc}}}
\newcommand\goto{\mathop{\longrightarrow}}
\newcommand\Step[1]{\medskip \noindent {\bf Step~#1.}\quad}
\newcommand\MScN[1]{\href{http://www.ams.org/mathscinet-getitem?mr=#1}{\nolinkurl{(#1)}}}
\newcommand\DOI[1]{\href{http://dx.doi.org/#1}{(doi: \nolinkurl{#1})}}
\newcommand\LINK[1]{\href{#1}{(link: \nolinkurl{#1})}}
\newcommand\DI{u_0 }
\newcommand\DIbd{w_0 }
\newcommand\DFNk{\omega }
\newcommand\CSTB[1]{B_{#1} }
\newcommand\CSTR[1]{R_{#1} }
\newcommand\cstpa{a}
\definecolor{bostonuniversityred}{rgb}{0.8, 0.0, 0.0}
\newcommand\xHn[1]{ {\mathrm{H}}^{{#1}}}
\newcommand\xLn[1]{ {\mathrm{L}}^{{#1}}}
\newcommand\xCn[1]{ {\mathrm{C}}^{{#1}}}
\newcommand\xWn[1]{ {\mathrm{W}}^{{#1}}}
\newcommand\xdif{\,\mathrm{d}}
\begin{document}

\title{Sign-changing solutions of the nonlinear heat equation with persistent singularities}

\def\shorttitle{Sign-changing solutions with persistent singularities}

\author[T. Cazenave]{Thierry Cazenave$^1$}
\email{\href{mailto:thierry.cazenave@sorbonne-universite.fr}{thierry.cazenave@sorbonne-universite.fr}}

\author[F. Dickstein]{Fl\'avio Dickstein$^{1,2}$}
\email{\href{mailto:flavio@labma.ufrj.br}{flavio@labma.ufrj.br}}

\author[I. Naumkin]{Ivan Naumkin$^3$}
\email{\href{mailto:ivan.naumkin@iimas.unam.mx}{ivan.naumkin@iimas.unam.mx}}

\author[F. B.~Weissler]{Fred B. Weissler$^4$}
\email{\href{mailto:weissler@math.univ-paris13.fr}{weissler@math.univ-paris13.fr}}

\address{$^1$Sorbonne Universit\'e, CNRS, Universit\'e de Paris. Laboratoire Jacques-Louis Lions,
B.C. 187, 4 place Jussieu, 75252 Paris Cedex 05, France}

\address{$^2$Instituto de Matem\'atica, Universidade Federal do Rio de Janeiro, Caixa Postal 68530, 21944--970 Rio de Janeiro, R.J., Brazil}

\address{$^3$Departamento de F\'{\i}sica Matem\'{a}tica, Instituto de Investigaciones
en Matem\'{a}ticas Aplicadas y en Sistemas. Universidad Nacional Aut\'{o}noma
de M\'{e}xico, Apartado Postal 20-126, Ciudad de M\'{e}xico, 01000, M\'{e}xico.}

\address{$^4$Universit\'e Sorbonne Paris Nord, CNRS UMR 7539 LAGA, 99 Avenue J.-B. Cl\'e\-ment, F-93430 Villetaneuse, France}

\subjclass[2010] {Primary 35K91; secondary 35K58, 35C06, 35K67, 35A01, 35A21}

\keywords{Nonlinear heat equation, sign-changing solutions, singular self-similar solutions, singular stationary solutions, persistent singularities}

\thanks{Research supported by the ``Brazilian-French Network in Mathematics"}
\thanks{Fl\'avio Dickstein was partially supported by CNPq (Brasil).}
\thanks{Ivan Naumkin is a Fellow of Sistema Nacional de Investigadores. He was partially supported by project PAPIIT IA101820}

\begin{abstract}
We study the existence of sign-changing solutions to the nonlinear heat equation $\partial _t u = \Delta u +  |u|^\alpha u$  on ${\mathbb R}^N $, $N\ge 3$, with $\frac {2} {N-2} < \alpha <\alpha _0$, where $\alpha _0=\frac {4} {N-4+2\sqrt{ N-1 } }\in (\frac {2} {N-2}, \frac {4} {N-2})$, which are singular at $x=0$ on an interval of time. 
In particular, for certain $\mu >0$ that can be arbitrarily large, we prove that for any $u_0 \in \mathrm{L} ^\infty _{\mathrm{loc}} ({\mathbb R}^N \setminus \{ 0 \}) $ which is bounded at infinity and equals  $\mu  |x|^{- \frac {2} {\alpha }}$ in a neighborhood of $0$, there exists a local (in time) solution $u$ of the nonlinear heat equation with initial value $u_0$, which is sign-changing, bounded at infinity and has the singularity $\beta  |x|^{- \frac {2} {\alpha }}$ at the origin in the sense that for $t>0$, $ |x|^{\frac {2} {\alpha }} u(t,x) \to \beta  $ as $ |x| \to 0$, where $\beta =  \frac {2} {\alpha }   ( N -2 - \frac {2} {\alpha } ) $. 
These solutions in general are neither stationary nor self-similar. 
\end{abstract}

\maketitle

\section{Introduction}
In this paper, we study the nonlinear heat equation 
\begin{equation} \label{NLHE}
\partial _t u = \Delta u +  |u|^\alpha u
\end{equation}
on $\R^N $ with 
\begin{equation} \label{fCondAlpha} 
N\ge 3 \quad  \text{and}\quad  \frac {2} {N-2}<\alpha <\alpha _0, 
\end{equation} 
where 
\begin{equation} \label{fPS0} 
\alpha _0=\frac {4} {N-4+2\sqrt{ N-1 } }.
\end{equation}
(Note that $\frac {2} {N-2} < \alpha _0 < \frac {4} {N-2}$, see Lemma~\ref{eNUM1}~\eqref{eNUM1:1b1}.) 
We are interested in sign-changing solutions of~\eqref{NLHE} which have a singularity at $x=0$ on an interval of time. 

Positive solutions of~\eqref{NLHE} with a standing or moving singularity have been well studied. 
The simplest, for all $\alpha >\frac {2} {N-2}$, is the homogeneous stationary solution $  \beta ^{\frac {1} {\alpha }} |x|^{- \frac {2} {\alpha }}$ where 
\begin{equation} \label{bta}
\beta = \frac {2} {\alpha }  \Bigl( N -2 - \frac {2} {\alpha } \Bigr) >0 .
\end{equation}
Moreover, for all $\frac {2} {N-2} < \alpha < \frac {4} {N-2} $,
\eqref{NLHE} has a one-parameter family $(U _\lambda )  _{ \lambda >0 } \subset \xCn{2} (\R^N \setminus \{ 0\})$ of radially symmetric, positive, singular stationary solutions satisfying $ |x|^{\frac {2} {\alpha }} U_\lambda (x) \to \beta ^{\frac {1} {\alpha }}  $ as $x\to 0$ and $ |x|^{N-2} U_\lambda (x) \to \lambda $ as $ |x| \to \infty $. 
Furthermore, for this range of $\alpha $, the family $(U _\lambda )  _{ \lambda >0 }$ and $  \beta ^{\frac {1} {\alpha }} |x|^{- \frac {2} {\alpha }}$ constitute all the positive, radially symmetric, singular stationary solutions of~\eqref{NLHE}. See~\cite[Proposition~3.1]{SerrinZ}.
Under the stronger assumption~\eqref{fCondAlpha}, these solutions can be used as prototypes to construct positive solutions of~\eqref{NLHE} with a moving singularity, i.e. a singularity located at $x= \xi (t)$ for every $t$ in some interval, under appropriate conditions on the function $\xi (\cdot )$.
See~\cite{SatoY2009, SatoY2012-2}. 
Positive self-similar solutions of~\eqref{NLHE}, both forward and backward,  with a standing or moving singularity, 
have also been constructed, see~\cite{SatoY2010, SatoY2012-1}. 
The finite-time blowup and the long-time asymptotic behavior of positive singular solutions of~\eqref{NLHE} are studied in~\cite{Sato2011, HoshinoY}. 

Sign changing stationary solutions of equation~\eqref{NLHE} have been less studied. We show here that  for all $\frac {2} {N-2} < \alpha < \frac {4} {N-2} $, equation~\eqref{NLHE}  has sign-changing, radially symmetric, stationary solutions that are singular at $x=0$. These solutions behave like $  \beta ^{\frac {1} {\alpha }} |x|^{- \frac {2} {\alpha }}$ at the origin and oscillate indefinitely as $ |x|\to \infty $. See Proposition~\ref{eStat1} and Corollary~\ref{eRemSum}.

For the same range $\frac {2} {N-2} < \alpha < \frac {4} {N-2} $, equation~\eqref{NLHE} also has sign-changing, radially symmetric, self-similar solutions which  are singular for all positive time.
More precisely, it follows from~\cite[Theorem~1.3]{CDNW1} that there exist an integer $ \overline{m}\ge 0 $ and an increasing sequence $( \mu _m) _{ m\ge   \overline{m}  }\subset (0, \infty )$, $\mu _m \to \infty $ as $m\to \infty $, such that for each $\mu _m$ there exists a radially symmetric self-similar solution 
\begin{equation*} 
U  (t, x)= t^{- \frac {1} {\alpha }} f  \Bigl( \frac { |x|} {\sqrt t} \Bigr)
\end{equation*}
of~\eqref{NLHE}  in the sense of distributions, where the profile $f \in \xCn{2} (0, \infty ) $ has exactly $m$ zeros and satisfies 
\begin{equation} \label{fpr2}
f'' + \Big(\frac{N-1}{r} +  \frac{r}{2}\Big)f'  + \frac{1}{\alpha}f  +  | f |^{\alpha}f = 0 .
\end{equation} 
Moreover, $r^{\frac {2} {\alpha }} f (r) \to  \beta ^{\frac {1} {\alpha }}$ as $r \to 0$ and $r^{\frac {2} {\alpha }} f (r) \to (-1)^m \mu _m$ as $r\to \infty $. 
It follows that  $|x|^{\frac {2} {\alpha }}U (t,x) \to  \beta ^{\frac {1} {\alpha }} $ as $ |x | \to 0$ for all $t>0$, and 
$ U (t, \cdot ) \to  (-1)^m \mu _m  | \cdot |^{- \frac {2} {\alpha }} $ in $\xLn{1}_\Loc  (\R^N ) $ as $t\to 0$.

The purpose of this article is to construct sign-changing solutions of~\eqref{NLHE} which are singular at $x=0$ for small positive time, and which are neither stationary nor self-similar. 
The construction of these solutions is based on the following perturbation result.

\begin{thrm} \label{ePSSZ1} 
Assume~\eqref{fCondAlpha}. 
Let $S>0$ and 
\begin{equation} \label{ePSSZ2} 
U \in  \xLn{{\alpha + 1}}_\Loc ((0, S ) \times \R^N ) \cap \mathrm{C} (( 0,S) \times ( \R^N \setminus \{0\}  ) ) 
\end{equation} 
satisfy for some constant $C$
\begin{equation} \label{fPShrho} 
| \,  |x|^\frac {2} {\alpha }U(t, x )-\beta ^{\frac {1} {\alpha }}|\le C\Bigl [\Bigl (\frac {  |x|} {  |x|+\sqrt{ t } } \Bigr )^\rho + |x|^\frac {2} {\alpha } \Bigr] ,
\end{equation}  
 for $0<t<S$ and $x\not = 0$, where
\begin{equation} \label{fPSrho} 
\rho = \frac {2} {\alpha } - \frac {N-2} {2} - \sqrt{   \frac {(N-2)^2} {4} - \beta (\alpha +1)   } ,
\end{equation}
and $\beta $ is given by~\eqref{bta}. 
 ($\rho >0$ by Lemma~$\ref{eNUM1}~\eqref{eNUM1:3}$  below.)
Assume further that there exists $U_0 \in \xLn{1} _{ \Loc } (\R^N ) $ such that
\begin{equation} \label{fDfnUdz} 
U(t, \cdot ) \goto _{ t\to 0 } U_0 (\cdot ),
\end{equation} 
in $\xLn{1} _{ \Loc } (\R^N ) $. 
 Given $  \delta >0$ and $\DI \in \xLn{1} _\Loc   (\R^N  ) \cap \xLn{\infty }   ( \{ |x| >\delta  \} ) $ such that
\begin{equation*} 
\DI (x) = U_0 (x)  \text{ a.e. on } \{  |x|<\delta  \},
\end{equation*} 
there exist $ T \in (0,S)$ and  a solution  $u\in \xLn{{\alpha +1}} _\Loc ((0,T) \times \R^N ) \cap \mathrm{C} (( 0,T) \times ( \R^N \setminus \{0\}  ) )$ of
\begin{equation} \label{fPS4:b8} 
\partial _t u - \Delta u =  |u|^\alpha u - [ \partial _t U - \Delta U -  |U|^\alpha U] ,
\end{equation} 
in the sense of distributions ${\mathcal D}' ((0,T) \times \R^N )$, such that
\begin{equation} \label{fPS4:b6} 
u(t) \goto _{ t\to 0 } \DI \quad  \text{in } \xLn{1} _\Loc ( \R^N ) ,
\end{equation} 
and 
\begin{equation} \label{fPS4:b7} 
 | u(t,x) - U (t, x)  |\le C  (1 +  |x|^{- \eta}), \quad 0<t<T, x\not = 0, 
\end{equation} 
where 
\begin{equation} \label{fPSeta}
\eta =  \frac {N-2} {2} -   \sqrt{  \frac {(N-2)^2} {4} -\beta (\alpha +1) } >0. 
\end{equation} 
\end{thrm}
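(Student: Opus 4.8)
The plan is to solve \eqref{fPS4:b8} by a fixed-point argument for the remainder $u - U$, working in a weighted space adapted to the singularity $|x|^{-2/\alpha}$ of $U$. Writing $v = u - U$, the equation \eqref{fPS4:b8} becomes $\partial_t v - \Delta v = |U+v|^\alpha(U+v) - |U|^\alpha U = g(v)$, and the initial condition \eqref{fPS4:b6} together with the assumption $\DI = U_0$ on $\{|x|<\delta\}$ forces $v(t) \to \DI - U_0$ as $t \to 0$, a function in $\xLn{1}_\Loc(\R^N)$ that vanishes on $\{|x|<\delta\}$ and is bounded for $|x|>\delta$. Since $\DI - U_0$ is bounded away from the singularity, the natural ansatz is $v = e^{t\Delta}(\DI - U_0) + \int_0^t e^{(t-s)\Delta} g(v(s))\,ds$, where $e^{t\Delta}(\DI-U_0)$ is already under control; the real work is the Duhamel term. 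I would linearize: $|g(v)| \le C(|U|^\alpha|v| + |v|^{\alpha+1})$, and near $x=0$ one has $|U(t,x)|^\alpha \le C\beta |x|^{-2}$ (from \eqref{fPShrho}, since $|x|^{2/\alpha}U \to \beta^{1/\alpha}$ and $|U|^\alpha \sim \beta |x|^{-2}$). Thus the linear part of the operator is essentially $\partial_t v - \Delta v - \beta(\alpha+1)|x|^{-2}v = (\text{small}) + (\text{superlinear})$, and the Hardy-type constant $\beta(\alpha+1)$ is exactly what appears under the square roots in \eqref{fPSrho} and \eqref{fPSeta}.

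The key computation is the mapping estimate for $\int_0^t e^{(t-s)\Delta}(\,|x|^{-2}\,\cdot\,)$ in the norm $\||x|^\eta w(t,x)\|_{L^\infty}$, with $\eta$ as in \eqref{fPSeta}. The point of choosing $\eta = \frac{N-2}{2} - \sqrt{(N-2)^2/4 - \beta(\alpha+1)}$ is that $x \mapsto |x|^{-\eta}$ is (a subsolution for) the static operator $-\Delta - \beta(\alpha+1)|x|^{-2}$: indeed $-\Delta|x|^{-\eta} = \eta(N-2-\eta)|x|^{-\eta-2}$, and $\eta(N-2-\eta) = \beta(\alpha+1)$ precisely by the definition of $\eta$. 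So one expects a bound of the form $|x|^\eta \big| \int_0^t e^{(t-s)\Delta}(|x|^{-2} w(s))\,ds\big| \le C\, t^{\theta}\, \sup_{s<t}\||x|^\eta w(s)\|_\infty$ for some $\theta > 0$, gaining a small power of $T$; the analysis of $e^{t\Delta}$ acting on homogeneous weights $|x|^{-a}$ (self-similar scaling plus the Gaussian kernel) yields this, with $\theta$ controlled by the gap between $\eta$ and the exponents in $g$. The superlinear term $|v|^{\alpha+1}$ is handled similarly, using $(\alpha+1)\eta < $ something manageable together with the bound $|x|>\delta$ region where $v$ is merely bounded. One also needs the bound \eqref{fPShrho} on $U$ to ensure $e^{t\Delta}(\DI-U_0)$ and the cross terms involving $U$ fit the weighted space; the exponent $\rho$ enters here, governing how fast $|x|^{2/\alpha}U - \beta^{1/\alpha}$ decays and hence how the inhomogeneity $[\partial_t U - \Delta U - |U|^\alpha U]$ (which is \emph{not} assumed to vanish — note the equation \eqref{fPS4:b8} explicitly subtracts it) is absorbed. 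Actually, re-reading: the right-hand side of \eqref{fPS4:b8} cancels the defect of $U$, so $u = U + v$ solves $\partial_t u - \Delta u = |u|^\alpha u$ iff $\partial_t v - \Delta v = |U+v|^\alpha(U+v) - |U|^\alpha U$ exactly, with no leftover inhomogeneity — so the defect term never needs estimating, only $g(v)$ does. This is the cleaner route.

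Having set up the complete metric space $X_T = \{ v \in \mathrm{C}((0,T)\times(\R^N\setminus\{0\})) : \|v\|_{X_T} := \sup_{0<t<T}\sup_{x\ne 0} \frac{|v(t,x)|}{1+|x|^{-\eta}} \le K \}$ for suitable $K$, with $v$ also required to converge to $\DI - U_0$ in $\xLn{1}_\Loc$ as $t \to 0$, I would show the map $\Phi(v)(t) = e^{t\Delta}(\DI-U_0) + \int_0^t e^{(t-s)\Delta}g(v(s))\,ds$ is a contraction on $X_T$ for $T$ small: the free term lies in $X_T$ because $\DI - U_0 \in L^\infty(\{|x|>\delta\})$ and vanishes near $0$, so $e^{t\Delta}(\DI - U_0)$ is bounded (the heat semigroup only mildly spreads the singularity-free data), while the Duhamel term is estimated by the weighted bounds above, each contributing a positive power of $T$. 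The $\xLn{1}_\Loc$ convergence \eqref{fPS4:b6} follows from standard properties of $e^{t\Delta}$ on the free part plus the fact that the Duhamel term is $o(1)$ in $\xLn{1}_\Loc$; the pointwise bound \eqref{fPS4:b7} is then exactly $\|v\|_{X_T} \le K$ restated. Regularity in \eqref{ePSSZ2} for $u$ (i.e. $\xLn{\alpha+1}_\Loc$ and continuity off the axis) comes from parabolic regularity applied to $v$ once the bound is in hand, using that $|x|^{-\eta(\alpha+1)}$ and $|x|^{-2-\eta}$ are locally integrable in $\R^N$ — which needs $\eta(\alpha+1) < N$ and $\eta + 2 < N$; these should follow from \eqref{fCondAlpha} and the explicit formula for $\eta$, via Lemma~\ref{eNUM1}. \textbf{Main obstacle.} The delicate point is proving the weighted smoothing estimate $|x|^\eta \int_0^t e^{(t-s)\Delta}(|x|^{-2}w(s))\,ds \lesssim T^\theta\|w\|$ with $\theta>0$ and a \emph{finite} constant: this is a borderline Hardy-type inequality for the heat flow, and the constant is finite exactly because $\eta$ is the \emph{smaller} root (so $\eta < (N-2)/2$, keeping us below the critical Hardy threshold and making $|x|^{-\eta}$ locally integrable against the relevant kernels). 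Verifying the positivity of the gain $\theta$ — equivalently that the strict inequalities among $\alpha$, $\rho$, $\eta$ forced by \eqref{fCondAlpha} leave room — is where Lemma~\ref{eNUM1} and the precise choice of $\alpha_0$ in \eqref{fPS0} are essential, and is the crux of the whole argument.
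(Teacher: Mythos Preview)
Your approach has a genuine gap at exactly the point you flag as the main obstacle: the claimed weighted smoothing estimate
\[
|x|^\eta \int_0^t e^{(t-s)\Delta}\bigl(|y|^{-2}w(s)\bigr)\,ds \lesssim T^\theta\,\sup_{s<t}\bigl\||x|^\eta w(s)\bigr\|_\infty,\qquad \theta>0,
\]
is \emph{false}. To see why, take $w(s,y)=|y|^{-\eta}$. Since $\eta(N-2-\eta)=\beta(\alpha+1)$, the function $|x|^{-\eta}$ is an exact stationary solution of $\partial_t v=\Delta v+\beta(\alpha+1)|x|^{-2}v$, so the Duhamel identity reads
\[
\beta(\alpha+1)\int_0^t e^{(t-s)\Delta}|y|^{-2-\eta}\,ds=|x|^{-\eta}-e^{t\Delta}|x|^{-\eta}.
\]
Both sides are homogeneous of degree $-\eta$ under parabolic scaling, so the ratio $\beta(\alpha+1)\int_0^t e^{(t-s)\Delta}|y|^{-2-\eta}\,ds\big/|x|^{-\eta}$ depends only on $z=x/\sqrt t$ and equals $1-|z|^\eta\phi(z)$ where $\phi=e^\Delta|\cdot|^{-\eta}>0$; as $z\to 0$ this ratio tends to $1$. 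Hence there is no factor $T^\theta$, and the Lipschitz constant of $\Phi$ in your norm $\sup|v|/(1+|x|^{-\eta})$ is at least $1$: no contraction, for any $T>0$. The equation $\partial_t-\Delta-\beta(\alpha+1)|x|^{-2}$ is scale-invariant, which is another way to see that smallness of $T$ cannot help.

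The paper's proof circumvents this in two steps. First, it absorbs the critical potential into the propagator: one works with the semigroup $e^{tH}$ generated by $H=\Delta+\beta(\alpha+1)|x|^{-2}$ (well defined precisely because $\beta(\alpha+1)<\frac{(N-2)^2}{4}$, i.e.\ $\alpha<\alpha_0$), and the integral equation becomes $w(t)=e^{tH}w_0+\int_0^t e^{(t-s)H}\mathcal M w(s)\,ds$ with $\mathcal M w=|U+w|^\alpha(U+w)-|U|^\alpha U-\beta(\alpha+1)|x|^{-2}w$. The point is that $\mathcal M w$ now carries the extra decay factor $\bigl(\frac{|x|}{|x|+\sqrt t}\bigr)^{\omega}$ with $\omega=\rho\min\{1,\alpha\}>0$ coming from~\eqref{fPShrho}, which is exactly what your formulation with $e^{t\Delta}$ lacks. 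Second, even this is borderline as $(t,x)\to(0,0)$, so the contraction is run not in the space $\{|w|\le K(1+|x|^{-\eta})\}$ but in $\{|w|\le M\Theta\}$ with a multi-scale weight $\Theta(t,x)=h(t,x)\bigl(t^{m/2}+\sum_{j=1}^m t^{(j-1)/2}\chi_j\bigr)$, where the $\chi_j$ are cutoffs at dyadic radii $2^{-j}\delta$ and $h(t,x)=(1+\sqrt t/|x|)^\eta$; smallness comes from choosing $m$ large (a constant $R_m\to 0$), not from $T$ alone. Your observation that $|x|^{-\eta}$ is the natural weight is correct and reappears through $h$, but it must be paired with the right propagator and the $t$-dependent refinement of the norm.
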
 

In Theorem~\ref{ePSSZ1}, the choice of $\DI $ is both flexible and rigid. On the one hand, for $ |x| > \delta $ there is complete freedom to choose $\DI $ as long as it is bounded. On the other hand, for $ |x|< \delta $, $\DI $ must agree precisely with $U_0$, the initial value of the given function $U$. 
The following remark gives information on the relationship between $U$ and $u$, and how $u_0$ affects this relationship. 

\begin{rmrk} \label{ePSSZ1:rem1} 

\begin{enumerate}[{\rm (i)}] 

\item \label{ePSSZ1:rem1:1} 
In Theorem~\ref{ePSSZ1},  $U$ need not be a solution of~\eqref{NLHE}, so that $u $  need not be a solution of~\eqref{NLHE}. However, if $U$ solves~\eqref{NLHE}, then so does $u $. 

\item \label{ePSSZ1:rem1:2} 
In Theorem~\ref{ePSSZ1},  $U$ need not be radially symmetric. Even if $U$ is radially symmetric, $u$ is not radially symmetric if $\DI$ is not. (Recall that $\DI$ is specified only in the ball of radius $\delta $.)

\item \label{ePSSZ1:rem1:3} 
The function $u$ cannot be a stationary solution of~\eqref{NLHE}, unless $\DI$ is a stationary solution of~\eqref{NLHE}, by~\eqref{fPS4:b6}. 

\item \label{ePSSZ1:rem1:4} 
The function $u$ cannot be a self-similar solution of~\eqref{NLHE}, unless $\DI$ is homogeneous. Indeed, recall that the initial value of a self-similar solution, if it exists, is always homogeneous. 

\item \label{ePSSZ1:rem1:5} 
Since $ \eta < \frac {N-2} {2}< \frac {2} {\alpha }$ by~\eqref{fCondAlpha} and~\eqref{eNUM1:1}, it follows from~\eqref{fPShrho} and~\eqref{fPS4:b7} that $u$ given by Theorem~\ref{ePSSZ1} is singular for all $0<t<T$ at $x=0$, and has the same singular behavior as $U(t)$, i.e.
\begin{equation} \label{fSBu} 
 |x| ^{ \frac {2} {\alpha } }  u(t, x )  \goto  _{  x \to 0 } \beta .
\end{equation} 

\end{enumerate} 
\end{rmrk} 

In order to use Theorem~\ref{ePSSZ1} to construct sign-changing solutions of~\eqref{NLHE} with a singularity at $x=0$, we consider separately the cases where $U$ is a radially symmetric stationary solution of~\eqref{NLHE}, and where $U$ is a radially symmetric self-similar solution of~\eqref{NLHE} with singular profile.
This gives the following two theorems. 

\begin{thrm} \label{ePSSZpr1} 
Assume~\eqref{fCondAlpha}. 
Let $U$ be a radially symmetric, stationary solution of~\eqref{NLHE}  that is singular at $x=0$.
Given $\delta >0$ and $\DI \in \xLn{1} _\Loc   (\R^N  ) \cap \xLn{\infty }   ( \{ |x| >\delta  \} ) $ such that
\begin{equation*} 
\DI (x) = U(x) \text{ a.e. on } \{  |x|<\delta  \},
\end{equation*} 
there exist $ T >0 $ and  a solution  $u\in \xLn{{\alpha +1}} _\Loc ((0,T) \times \R^N ) \cap \mathrm{C} (( 0,T) \times ( \R^N \setminus \{0\}  ) )$ of~\eqref{NLHE} in the sense of distributions, such that~\eqref{fPS4:b7}  holds and $u(0)= \DI$ in the sense~\eqref{fPS4:b6}. 
Moreover, $u(t)$ is singular at $x=0$ for all $t<T$ and satisfies~\eqref{fSBu}.
\end{thrm}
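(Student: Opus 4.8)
The plan is to deduce this directly from the perturbation result, Theorem~\ref{ePSSZ1}, applied with the given stationary solution $U$ playing the role of the function ``$U$'' there. First I would verify that a radially symmetric stationary solution $U$ of~\eqref{NLHE} that is singular at $x=0$ actually meets the hypotheses~\eqref{ePSSZ2}--\eqref{fDfnUdz}. By the classification result cited in the introduction (\cite[Proposition~3.1]{SerrinZ}), under~\eqref{fCondAlpha} — hence in particular for $\frac2{N-2}<\alpha<\frac4{N-2}$ — the only radially symmetric, positive, singular stationary solutions are the homogeneous one $\beta^{1/\alpha}|x|^{-2/\alpha}$ and the members $U_\lambda$ of the one-parameter family, all of which lie in $\xCn{2}(\R^N\setminus\{0\})$ and satisfy $|x|^{2/\alpha}U(x)\to\beta^{1/\alpha}$ as $x\to0$. (One should also note that any radially symmetric singular stationary solution, sign-changing or not, has this same behavior near the origin; the relevant ODE analysis forces the leading singular term to be $\beta^{1/\alpha}|x|^{-2/\alpha}$.) Since $U$ is time-independent, membership in $\xLn{\alpha+1}_\Loc((0,S)\times\R^N)$ and in $\mathrm C((0,S)\times(\R^N\setminus\{0\}))$ for any $S>0$ reduces to $U\in\xLn{\alpha+1}_\Loc(\R^N)$ and $U\in\mathrm C(\R^N\setminus\{0\})$; near $0$ the bound $|U(x)|\le C|x|^{-2/\alpha}$ gives $\xLn{\alpha+1}_\Loc$ integrability exactly because $\frac{2}{\alpha}(\alpha+1)<N$ under~\eqref{fCondAlpha} (equivalently $\frac2\alpha<N-2$, which is~\eqref{eNUM1:1}).

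Next I would check the quantitative singular estimate~\eqref{fPShrho}. Because $U$ does not depend on $t$, the quantity $|x|^{2/\alpha}U(x)-\beta^{1/\alpha}$ is a function of $x$ alone, and the asymptotics $|x|^{2/\alpha}U(x)\to\beta^{1/\alpha}$ as $x\to 0$ together with continuity away from $0$ and boundedness away from the origin show that $||x|^{2/\alpha}U(x)-\beta^{1/\alpha}|$ is bounded on all of $\R^N$ and tends to $0$ as $|x|\to0$. One then only needs this to be dominated by $C[(|x|/(|x|+\sqrt t))^\rho+|x|^{2/\alpha}]$; since $\rho>0$ (Lemma~\ref{eNUM1}~\eqref{eNUM1:3}) the first term $(|x|/(|x|+\sqrt t))^\rho$ is bounded below on $|x|\ge\delta'$ for any fixed $\delta'$ by a positive constant depending on $t$, which handles the region away from $0$ after possibly enlarging $C$; and near $0$ a finer expansion of the stationary profile (again from the ODE/\cite{SerrinZ}) gives $||x|^{2/\alpha}U(x)-\beta^{1/\alpha}|\le C|x|^{\sigma}$ for a positive exponent $\sigma$, which is absorbed into the right-hand side since $(|x|/(|x|+\sqrt t))^\rho\ge c|x|^\rho$ near $0$ for bounded $t$ and one can check $\sigma\ge\min\{\rho,2/\alpha\}$ or simply use whichever of the two terms dominates. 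Finally, $U$ being stationary, $U(t,\cdot)\equiv U(\cdot)\to U(\cdot)$ trivially in $\xLn1_\Loc(\R^N)$ as $t\to0$, so~\eqref{fDfnUdz} holds with $U_0=U$; and the hypothesis $\DI=U$ a.e.\ on $\{|x|<\delta\}$ is precisely $\DI=U_0$ a.e.\ there.

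Having verified all the hypotheses, Theorem~\ref{ePSSZ1} yields $T\in(0,S)$ and a distributional solution $u$ of~\eqref{fPS4:b8} with the regularity stated, satisfying~\eqref{fPS4:b6} and~\eqref{fPS4:b7}. Since $U$ is now an actual solution of~\eqref{NLHE}, the bracket $[\partial_tU-\Delta U-|U|^\alpha U]$ in~\eqref{fPS4:b8} vanishes in $\mathcal D'$ (this is Remark~\ref{ePSSZ1:rem1}~\eqref{ePSSZ1:rem1:1}), so $u$ solves~\eqref{NLHE} itself in the sense of distributions. Estimate~\eqref{fPS4:b7} is exactly the conclusion claimed, and $u(0)=\DI$ in the sense of~\eqref{fPS4:b6} is~\eqref{fPS4:b6}. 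For the final assertion, Remark~\ref{ePSSZ1:rem1}~\eqref{ePSSZ1:rem1:5} shows that~\eqref{fPShrho} and~\eqref{fPS4:b7} together force $|x|^{2/\alpha}u(t,x)\to\beta$ as $|x|\to0$ for every $t<T$, which is both the singularity statement and~\eqref{fSBu}. This completes the proof.

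The main obstacle is the bookkeeping in the second paragraph: one must be sure the purely spatial singular bound on the stationary profile $U$ near $x=0$ really is subsumed by the $t$-dependent envelope in~\eqref{fPShrho} with the specific exponent $\rho$ from~\eqref{fPSrho}, and that away from the origin the first term of that envelope does not degenerate. Both points are genuinely easy once one notes that $(|x|/(|x|+\sqrt t))^\rho$ behaves like $|x|^\rho$ for $|x|\lesssim\sqrt t$ and is bounded away from $0$ for $|x|\gtrsim\sqrt t$ on a bounded time interval, but they are the only place where the hypotheses of Theorem~\ref{ePSSZ1} are not immediately visible.
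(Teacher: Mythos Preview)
Your overall strategy is correct and matches the paper's: verify the hypotheses of Theorem~\ref{ePSSZ1} for the stationary $U$ (with $U_0=U$), then invoke Remark~\ref{ePSSZ1:rem1}\,\eqref{ePSSZ1:rem1:1} and~\eqref{ePSSZ1:rem1:5}. Most of the checks you list are indeed routine.

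There is, however, a genuine gap in your verification of~\eqref{fPShrho}, and it is precisely the point you flag as ``bookkeeping'' and then dismiss as ``genuinely easy.'' Since the right-hand side of~\eqref{fPShrho} is monotone decreasing in $t$, the worst case is $t$ close to $S$, where $(|x|/(|x|+\sqrt t))^\rho\sim c\,|x|^\rho$ near the origin. Because $\rho<2/\alpha$ (this follows from~\eqref{fPSrho:b1} and $\mu_1<\frac1\alpha-\frac{N-2}{4}$), what you actually need near $x=0$ is the \emph{specific} rate
\[
\bigl|\,|x|^{2/\alpha}U(x)-\beta^{1/\alpha}\bigr|\le C\,|x|^{\rho},
\]
with $\rho$ exactly as in~\eqref{fPSrho}. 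A generic positive exponent $\sigma$ does \emph{not} suffice unless you know $\sigma\ge\rho$, and this is not something you can read off from~\cite{SerrinZ} (which in any case treats only positive solutions). The paper supplies this rate via Proposition~\ref{eLemCV1} (applied with $a=0$): after the Emden--Fowler change of variables~\eqref{fStat3}, one linearizes around $\beta^{1/\alpha}$ to obtain a constant-coefficient second-order ODE whose characteristic roots are $-2\mu_1=-\rho$ and $-2\mu_2$, and then runs a bootstrap using the variation-of-parameters formula to upgrade a crude decay $e^{-\varepsilon s}$ to $e^{-\rho s}$. This is the substantive step, not mere bookkeeping.

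A secondary point: your appeal to~\cite{SerrinZ} for the leading singular behavior $|x|^{2/\alpha}U(x)\to\beta^{1/\alpha}$ covers only the positive case. For sign-changing singular stationary solutions---which are the whole point here---the paper relies on its own Proposition~\ref{eStat1}\,\eqref{eStat1:2}, established via the transformation~\eqref{fStat3}--\eqref{fStat4} and the Lyapunov functional~\eqref{fStat6}. You correctly note in parentheses that the same conclusion should hold in general, but you would need to supply that argument (or cite Proposition~\ref{eStat1}).
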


We stress the fact that there exist sign-changing stationary solutions $U$ to which Theorem~\ref{ePSSZpr1} applies, by Corollary~\ref{eRemSum}. In this case, if $\delta $ is sufficiently large, then the solution $u$ is sign-changing for small time by~\eqref{fPS4:b6}. See below for further discussion of this point.

\begin{thrm} \label{ePSSZpr2} 
Assume~\eqref{fCondAlpha}. 
Let $f\in \xCn{2} (0,\infty  ) $ be a solution of the equation~\eqref{fpr2}  having the singularity 
$r^{\frac {2} {\alpha }} f (r) \to  \beta ^{\frac {1} {\alpha }}$ as $r \to 0$, let
\begin{equation} \label{fEximu} 
\mu = \lim  _{ r\to \infty  } r^{\frac {2} {\alpha }} f(r)  \in \R,
\end{equation} 
which exists by Proposition~$\ref{eLemCV2}$. 
Let
\begin{equation} \label{fPS4:b3} 
U  (t, x)= t^{- \frac {1} {\alpha }} f  \Bigl( \frac { |x|} {\sqrt t} \Bigr), \quad t>0, x\not = 0,
\end{equation}
so that $U$ is a self-similar solution of~\eqref{NLHE} by Proposition~$\ref{eLemCV2}$.
Given $\delta >0$ and $\DI \in \xLn{1} _\Loc   (\R^N  ) \cap \xLn{\infty }   ( \{ |x| >\delta  \} ) $ such that
\begin{equation*} 
\DI (x) =  \mu  |x|^{- \frac {2} {\alpha }}  \text{ a.e. on } \{  |x|<\delta  \},
\end{equation*} 
there exist $ T >0 $ and  a solution  $u\in \xLn{{\alpha +1}} _\Loc ((0,T) \times \R^N ) \cap \mathrm{C} (( 0,T) \times ( \R^N \setminus \{0\}  ) )$ of~\eqref{NLHE} in the sense of distributions, such that~\eqref{fPS4:b7}  holds and $u(0)= \DI$ in the sense~\eqref{fPS4:b6}. 
Moreover, $u(t)$ is singular at $x=0$ for all $t<T$ and satisfies~\eqref{fSBu}.
\end{thrm}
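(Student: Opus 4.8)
The plan is to deduce Theorem~\ref{ePSSZpr2} from the perturbation result Theorem~\ref{ePSSZ1}, applied with the reference function $U$ equal to the self-similar solution~\eqref{fPS4:b3} itself. Since this $U$ already solves~\eqref{NLHE} (Proposition~\ref{eLemCV2}), the inhomogeneous term $[\partial_t U-\Delta U-|U|^\alpha U]$ in~\eqref{fPS4:b8} vanishes identically; hence, once Theorem~\ref{ePSSZ1} applies, the function $u$ it produces is a genuine solution of~\eqref{NLHE} in $\mathcal{D}'((0,T)\times\R^N)$ by Remark~\ref{ePSSZ1:rem1}~\eqref{ePSSZ1:rem1:1}, the bound~\eqref{fPS4:b7} and the initial trace $u(0)=\DI$ in the sense~\eqref{fPS4:b6} are exactly the conclusions of Theorem~\ref{ePSSZ1}, and the persistent singularity~\eqref{fSBu} for every $0<t<T$ follows from Remark~\ref{ePSSZ1:rem1}~\eqref{ePSSZ1:rem1:5}. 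So the whole statement reduces to verifying that $U$ given by~\eqref{fPS4:b3}, together with $U_0(x)=\mu|x|^{-\frac{2}{\alpha}}$, satisfies the three hypotheses~\eqref{ePSSZ2},~\eqref{fDfnUdz} and~\eqref{fPShrho} of Theorem~\ref{ePSSZ1}; the remaining compatibility requirement $\DI=U_0$ a.e.\ on $\{|x|<\delta\}$ is built into the hypothesis on $\DI$.

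The common tool for all three verifications is the scaling identity
\begin{equation*}
|x|^{\frac{2}{\alpha}}\,U(t,x)=g\Bigl(\frac{|x|}{\sqrt t}\Bigr),\qquad g(r)\Eqdef r^{\frac{2}{\alpha}}f(r),
\end{equation*}
where $g\in\mathrm{C}(0,\infty)$, with $g(r)\to\beta^{1/\alpha}$ as $r\to0$ (the prescribed singularity of $f$) and $g(r)\to\mu$ as $r\to\infty$ by~\eqref{fEximu}; in particular $g$ is bounded on $(0,\infty)$, so that $|U(t,x)|\le\|g\|_{\xLn{\infty}(0,\infty)}\,|x|^{-\frac{2}{\alpha}}$ uniformly in $t>0$. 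Since $\alpha>\frac{2}{N-2}$ forces $\frac{2(\alpha+1)}{\alpha}<N$, the function $|x|^{-\frac{2}{\alpha}}$ lies in $\xLn{\alpha+1}_\Loc(\R^N)$; combined with the continuity of $U$ on $(0,S)\times(\R^N\setminus\{0\})$, which follows from $f\in\xCn{2}(0,\infty)$, this yields~\eqref{ePSSZ2} for any $S>0$. For~\eqref{fDfnUdz}: for each fixed $x\ne0$ one has $|x|/\sqrt t\to\infty$ as $t\to0$, whence $U(t,x)\to\mu|x|^{-\frac{2}{\alpha}}$ pointwise; since $|x|^{-\frac{2}{\alpha}}\in\xLn{1}_\Loc(\R^N)$ (because $\alpha>\frac{2}{N-2}>\frac{2}{N}$), the uniform bound above lets dominated convergence conclude that $U(t,\cdot)\to\mu|x|^{-\frac{2}{\alpha}}$ in $\xLn{1}_\Loc(\R^N)$, i.e.~\eqref{fDfnUdz} holds with $U_0=\mu|x|^{-\frac{2}{\alpha}}$.

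It remains to prove~\eqref{fPShrho}, which is the heart of the matter. Writing $r=|x|/\sqrt t$, the left-hand side of~\eqref{fPShrho} equals $|g(r)-\beta^{1/\alpha}|$ and the right-hand side equals $C\bigl[(\tfrac{r}{r+1})^{\rho}+t^{1/\alpha}r^{2/\alpha}\bigr]$. For $r\ge1$ one has $\tfrac{r}{r+1}\ge\tfrac12$, so the boundedness of $g$ makes the inequality trivial; for $0<r<1$ one has $\tfrac{r}{r+1}\ge\tfrac{r}{2}$, so it suffices to prove the one-variable estimate $|g(r)-\beta^{1/\alpha}|\le Cr^{\rho}$ for $0<r<1$. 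This is a statement about the solution $f$ of the profile ODE~\eqref{fpr2} near the singular point $r=0$: substituting $f(r)=r^{-\frac{2}{\alpha}}g(r)$ into~\eqref{fpr2}, one finds that $g$ solves
\begin{equation*}
r^2g''+\Bigl(N-1-\tfrac{4}{\alpha}\Bigr)rg'+\tfrac{r^3}{2}g'-\beta g+|g|^\alpha g=0 ,
\end{equation*}
and linearizing at the constant solution $g\equiv\beta^{1/\alpha}$ and discarding the term $\tfrac{r^3}{2}g'$, which is of strictly higher order at $r=0$, leaves the Euler equation $r^2w''+(N-1-\tfrac{4}{\alpha})rw'+\alpha\beta w=0$, whose indicial roots are $\tfrac{2}{\alpha}+\gamma_\pm$ with $\gamma_\pm=-\tfrac{N-2}{2}\pm\sqrt{\tfrac{(N-2)^2}{4}-(\alpha+1)\beta}$. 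Both roots are positive — the smaller one is $\tfrac{2}{\alpha}+\gamma_-=\rho>0$ by Lemma~\ref{eNUM1}~\eqref{eNUM1:3}, while $\tfrac{2}{\alpha}+\gamma_+=\tfrac{2}{\alpha}-\eta>0$ by~\eqref{fCondAlpha} — so $g-\beta^{1/\alpha}$ decays at rate at least $r^{\rho}$, the quadratic remainder $O((g-\beta^{1/\alpha})^2)$ in $|g|^\alpha g$ being again of higher order. I would make this rigorous by a contraction (or Gronwall) argument on the integral form of the ODE near $r=0$, or else invoke the corresponding asymptotic statement in Proposition~\ref{eLemCV2}. Granting it, Theorem~\ref{ePSSZ1} applies and the reduction of the first paragraph finishes the proof.

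The step I expect to be the main obstacle — indeed the only genuinely analytical one — is this estimate~\eqref{fPShrho}, that is, quantifying the rate at which $r^{\frac{2}{\alpha}}f(r)$ approaches $\beta^{1/\alpha}$: the exponent $\rho$ is dictated by the indicial analysis of~\eqref{fpr2} at the irregular point $r=0$ and cannot be relaxed. Everything else — the regularity~\eqref{ePSSZ2}, the $\xLn{1}_\Loc$ initial trace~\eqref{fDfnUdz}, and the passage from~\eqref{fPS4:b8} to~\eqref{NLHE} via Remark~\ref{ePSSZ1:rem1} — is routine once the scaling identity is in hand.
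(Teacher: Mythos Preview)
Your proposal is correct and follows essentially the same route as the paper: verify that the self-similar $U$ satisfies~\eqref{ePSSZ2}, \eqref{fDfnUdz}, \eqref{fPShrho}, then apply Theorem~\ref{ePSSZ1}. The paper packages these verifications into Proposition~\ref{eLemCV2}, and for the key estimate~\eqref{fPShrho} it invokes Proposition~\ref{eLemCV1}, whose proof (via the Emden--Fowler change of variable $s=-\log r$ and a bootstrap on the variation-of-constants formula) is exactly the rigorous version of the indicial-root argument you sketch; your Euler-equation formulation and the paper's $s=-\log r$ formulation are equivalent, with the same roots $\rho=2\mu_1$ and $\tfrac{2}{\alpha}-\eta=2\mu_2$.
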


There exist sign-changing  self-similar solutions $U$ to which Theorem~\ref{ePSSZpr2} applies, by Proposition~\ref{eLemCV2}.
In this case, the solution $u$ is necessarily sign-changing for small time, see the discussion below. 
Moreover, Proposition~\ref{eLemCV2} states that~\eqref{fEximu} can be achieved by a sign-changing profile for a sequence $\mu = \mu _n \to \infty $.

Note that, for a given solution $U$, Theorems~\ref{ePSSZpr1} and~\ref{ePSSZpr2} produce many different solutions of~\eqref{NLHE}  with the same singularity. Indeed, we can choose  $\DI$ arbitrarily for $ |x|>\delta $ as long as $ \DI \in \xLn{\infty }  (\{  |x|>\delta \}  )$. That two different choices of $\DI$ produce two different solutions of~\eqref{NLHE} follows from~\eqref{fPS4:b6}. 

We observe that using Remark~\ref{ePSSZ1:rem1} and Theorems~\ref{ePSSZpr1} and~\ref{ePSSZpr2} we do indeed obtain sign-changing solutions of~\eqref{NLHE} with persistent singularities, which are neither stationary nor self-similar. 
We may assume that $u_0$ is neither homogeneous, nor a stationary solution of~\eqref{NLHE}. 
That $u$ is sign-changing is of course true by~\eqref{fPS4:b6} if $\DI$ is sign-changing, and
this is always possible since $\DI$ is prescribed only for $ |x|<\delta $.
Furthermore, in Theorem~\ref{ePSSZpr1}, if $U$ is sign-changing and $\delta $ is  chosen sufficiently large, then $\DI$ is necessarily sign-changing. Finally, even if $\DI >0$, Theorem~\ref{ePSSZpr2} produces sign-changing solutions. 
Indeed, suppose the profile $f$ is sign-changing (there exist such profiles, see~\cite[Theorem~1.3]{CDNW1}), and
let $\tau, \varepsilon  >0$ be such that $f(\tau )= -\varepsilon  $. Given any $x_0\in \R^N $ such that $ |x_0|= 1$, it follows from~\eqref{fPS4:b7} with $x= \tau \sqrt t x_0$ that
\begin{equation*} 
 | t^{\frac {1} {\alpha }} u(t, \tau \sqrt t x_0) + \varepsilon   | = t^{\frac {1} {\alpha }} | u(t, \tau \sqrt t x_0) - U (t, \tau \sqrt t x_0)  | \le C ( t^{\frac {1} {\alpha }} + \tau ^{- \eta } t^{\frac {1} {\alpha} - \frac {\eta} {2}}) \le \frac {\varepsilon  } {2}
\end{equation*} 
for $t>0$ small, so that $ u(t, \tau \sqrt t x_0) \le - \frac {\varepsilon  } {2} t^{ - \frac {1} {\alpha }} <0$ for $t>0$ small.

We formalize some of the previous observations with the following corollary to Theorem~\ref{ePSSZpr2}.

\begin{crllr} \label{ePSSZpr2:cor} 
There exists a sequence $(\mu_n )  _{ n\ge 1 } \subset (0,\infty )$, $\mu _n\to \infty $, such that if 
 $\DI \in \xLn{1} _\Loc   (\R^N  ) \cap \xLn{\infty }   ( \{ |x| >1  \} ) $ satisfies
\begin{equation*} 
\DI (x) =  \mu _n |x|^{- \frac {2} {\alpha }}  \text{ a.e. on } \{  |x|<\delta  \},
\end{equation*} 
for some $\delta >0$ and $n\ge 1$, then there exist $ T >0 $ and  a sign-changing solution  $u\in \xLn{{\alpha +1}} _\Loc ((0,T) \times \R^N ) \cap \mathrm{C} (( 0,T) \times ( \R^N \setminus \{0\}  ) )$ of~\eqref{NLHE} in the sense of distributions, which is not self-similar (or stationary), such that $u(0)= \DI$ in the sense~\eqref{fPS4:b6} and $u$ satisfies~\eqref{fSBu} for all $0<t<T$. In particular, $u(t)$ is singular at $x=0$ for all $0 < t<T$. 
\end{crllr}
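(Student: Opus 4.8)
The plan is to read off the corollary as the special case of Theorem~\ref{ePSSZpr2} in which $U$ is one of the sign-changing, singular, self-similar solutions supplied by Proposition~\ref{eLemCV2}, supplemented by a direct verification that the resulting $u$ changes sign and is neither self-similar nor stationary.

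First, by Proposition~\ref{eLemCV2} there is a sequence of \emph{sign-changing} profiles $f_n\in\xCn{2}(0,\infty)$ solving~\eqref{fpr2} with $r^{\frac{2}{\alpha}}f_n(r)\to\beta^{\frac{1}{\alpha}}$ as $r\to0$ and with $\mu_n\Eqdef\lim_{r\to\infty}r^{\frac{2}{\alpha}}f_n(r)\in(0,\infty)$, $\mu_n\to\infty$; this produces the sequence $(\mu_n)_{n\ge1}$ of the statement. Fix $n\ge1$ and set $U(t,x)=t^{-\frac{1}{\alpha}}f_n(|x|/\sqrt{t})$, which is a self-similar solution of~\eqref{NLHE} by Proposition~\ref{eLemCV2} and for which~\eqref{fEximu} holds with $\mu=\mu_n$, so that $U(t,\cdot)\to\mu_n|\cdot|^{-\frac{2}{\alpha}}$ in $\xLn{1}_\Loc(\R^N)$ as $t\to0$. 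Given $\DI$ as in the statement, it satisfies the hypotheses of Theorem~\ref{ePSSZpr2} relative to this $U$: it equals $\mu_n|x|^{-\frac{2}{\alpha}}$ a.e.\ on $\{|x|<\delta\}$ and is bounded for $|x|>\delta$ (being bounded at infinity and, near $0$, equal to the bounded function $\mu_n|x|^{-\frac{2}{\alpha}}$). Theorem~\ref{ePSSZpr2} therefore yields $T>0$ and a solution $u\in\xLn{{\alpha+1}}_\Loc((0,T)\times\R^N)\cap\mathrm{C}((0,T)\times(\R^N\setminus\{0\}))$ of~\eqref{NLHE} in ${\mathcal D}'((0,T)\times\R^N)$, with $u(0)=\DI$ in the sense~\eqref{fPS4:b6}, satisfying~\eqref{fPS4:b7} with this $U$, and satisfying~\eqref{fSBu} for all $0<t<T$; in particular $u(t)$ is singular at $x=0$ for every $0<t<T$.

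It remains to check the two qualitative features. \emph{Sign change.} Fix $t\in(0,T)$. By~\eqref{fSBu} (and $\beta>0$) there is $r>0$ with $u(t,x)>0$ for $0<|x|<r$. On the other hand, since $f_n$ is sign-changing, choose $\tau,\varepsilon>0$ with $f_n(\tau)=-\varepsilon$; for any $x_0\in\R^N$ with $|x_0|=1$, inserting $x=\tau\sqrt{t}\,x_0$ into~\eqref{fPS4:b7} gives, exactly as in the computation displayed just before the corollary, $u(t,\tau\sqrt{t}\,x_0)\le-\frac{\varepsilon}{2}\,t^{-\frac{1}{\alpha}}<0$ for $t>0$ small — this uses the strict inequality $\eta<\frac{2}{\alpha}$, valid by~\eqref{fCondAlpha} and Lemma~\ref{eNUM1}, to guarantee $\frac{1}{\alpha}-\frac{\eta}{2}>0$. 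Hence $u$ takes both signs, i.e.\ is sign-changing. \emph{Not self-similar, not stationary.} By Remark~\ref{ePSSZ1:rem1}~\ref{ePSSZ1:rem1:4} a self-similar solution of~\eqref{NLHE} has homogeneous initial trace, so $u$ is not self-similar once $\DI$ is chosen not homogeneous for $|x|>\delta$, which we may and do; and by Remark~\ref{ePSSZ1:rem1}~\ref{ePSSZ1:rem1:3} a stationary solution of~\eqref{NLHE} equals its own initial value, whereas $\DI$ agrees near $0$ with $\mu_n|x|^{-\frac{2}{\alpha}}$, which is not a stationary solution of~\eqref{NLHE} since $\mu_n\neq\beta^{\frac{1}{\alpha}}$ (recall $\mu_n\to\infty$). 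This proves the corollary.

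The substantive content is entirely in Theorem~\ref{ePSSZpr2} — hence in the perturbation result Theorem~\ref{ePSSZ1} — and in the construction, in Proposition~\ref{eLemCV2}, of sign-changing singular self-similar profiles with $\mu_n\to\infty$; the rest is bookkeeping. The only points deserving a line of care are that $\DI$ genuinely meets the function-space hypothesis of Theorem~\ref{ePSSZpr2} on $\{|x|>\delta\}$ (immediate from ``bounded at infinity'' together with the explicit form near $0$), and the strict inequality $\frac{1}{\alpha}-\frac{\eta}{2}>0$ used to extract negative values of $u$ from~\eqref{fPS4:b7}, which is precisely the strict bound $\eta<\frac{2}{\alpha}$ recorded in Lemma~\ref{eNUM1}.
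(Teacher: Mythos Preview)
Your approach is exactly the paper's: the corollary is offered there as an immediate consequence of Theorem~\ref{ePSSZpr2}, Proposition~\ref{eLemCV2}, and the sign-change computation displayed just before the corollary, and you reproduce precisely that chain.

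There is one genuine slip. In arguing that $u$ is not self-similar you say $\DI$ may be ``chosen not homogeneous for $|x|>\delta$, which we may and do''. But in the corollary $\DI$ is \emph{given}, not chosen: the statement is asserted for every admissible $\DI$. In particular $\DI(x)=\mu_n|x|^{-2/\alpha}$ on all of $\R^N$ meets the hypotheses (it is bounded on $\{|x|>1\}$), and for this $\DI$ the solution supplied by the construction behind Theorem~\ref{ePSSZpr2} is $U$ itself, which \emph{is} self-similar. So your argument does not cover that case. The paper's discussion immediately preceding the corollary makes the tacit assumption ``we may assume that $u_0$ is neither homogeneous, nor a stationary solution'' explicit; you should acknowledge this restriction rather than smuggle it in with ``which we may and do''. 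A smaller point in the same spirit: Theorem~\ref{ePSSZpr2} requires $\DI\in\xLn{\infty}(\{|x|>\delta\})$, whereas the corollary only assumes $\xLn{\infty}(\{|x|>1\})$; your parenthetical justification (``near $0$, equal to the bounded function $\mu_n|x|^{-2/\alpha}$'') is off, since that function is unbounded near $0$ and the region at issue is $\{\delta<|x|<1\}$ when $\delta<1$.
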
 

In Theorem~\ref{ePSSZpr2}, the solution $u (t) $ has the spatial singularity $\mu  |x|^{- \frac {2} {\alpha }}$ when $t=0$, and the singularity $\beta ^{\frac {1} {\alpha }}  |x|^{- \frac {2} {\alpha }}$ when $t>0$. Since $\mu $ can be chosen arbitrarily large by Proposition~\ref{eLemCV2}, we see that the singularity of $u$ at $t=0$ can be greater than the singularity at  $t>0$. 

Let $\alpha >0$ and let $\DI \in \xLn{\infty }  _\Loc (\R^N \setminus \{0\}) \cap \xLn{\infty }  ( \{  |x|>1 \} )  $  equal $ \mu  |x|^{- \frac {2} {\alpha }}$ near the origin with $\mu >0$.
If $\mu $ is sufficiently large, then there is no positive (possibly singular) solution of~\eqref{NLHE} with the initial value $\DI$. See~\cite[Proposition~A.1]{CDNW1} or~\cite[Corollary~2.7]{CDNW2}.
(Note that this is not in contradiction with the results in~\cite{SatoY2009, SatoY2010, SatoY2012-1, SatoY2012-2}, since all the positive, singular solutions of~\eqref{NLHE} constructed there have an initial value $\DI = u( 0 )$ which behaves like $  \beta ^{\frac {1} {\alpha }} |x|^{- \frac {2} {\alpha }}$ near the origin.)
On the other hand, if $\alpha <\frac {4} {N-2}$, then there exist sign-changing, local in time solutions (regular for positive time) of~\eqref{NLHE} with the initial value $\DI$. See~\cite[Theorem~5.1]{CDNW2}. It follows from Corollary~\ref{ePSSZpr2:cor} that, at least for certain arbitrarily large $\mu $ and under assumption~\eqref{fCondAlpha}, there also exist local in time, sign-changing solutions of~\eqref{NLHE} with the initial value $\DI$, which are singular at the origin for positive time. 

We observe that for positive data ($U\ge 0$ and $\DI \ge 0$), Theorem~\ref{ePSSZpr1} is weaker than~\cite[Theorem~1.1]{SatoY2012-2}. Indeed, in~\cite[Theorem~1.1]{SatoY2012-2}, the singularity of $u$ can move with time, and $\DI$ need not be equal to $U(0)$ in a neighborhood of the origin, but sufficiently close to $U(0)$.  
A technical reason for this difference is that Theorem~\ref{ePSSZpr1} allows sign-changing solutions so that we cannot apply the powerful comparison arguments used in~\cite{SatoY2012-2}.

Also, it is natural ask if $u(t,x) \to 0$ as $ |x| \to \infty $ in Theorems~\ref{ePSSZpr1} and~\ref{ePSSZpr2}, assuming $\DI (x) \to 0$ as $ |x| \to \infty $. Our construction of the solution $u$ does not answer this question. 
The analogous property for perturbations of self-similar solutions with regular profile is true, see~\cite[Theorem~5.1]{CDNW2}.

We now describe our strategy to prove Theorem~\ref{ePSSZ1}.
We construct $u$ as a perturbation of $U$ in the form
\begin{equation*} 
u = U + w .
\end{equation*}
The resulting equation for $w$ is
\begin{equation} \label{fIntro1} 
\partial _t w - \Delta w =  |U + w|^\alpha (U+w) -  |U|^\alpha U .
\end{equation} 
The leading term on the right-hand side of~\eqref{fIntro1} is $(\alpha +1)  |U|^\alpha w$,  which by~\eqref{fPShrho} behaves like  $  \beta (\alpha +1)  |x|^{-2} w$ near the origin. This makes it delicate to apply a standard perturbation argument to~\eqref{fIntro1}. 
It turns out to be helpful to subtract  the term $ \beta (\alpha +1)  |x|^{-2} w $ from both sides of the equation, leading to the following heat equation with inverse square potential
\begin{equation} \label{fPT2} 
\partial _t w - \Delta w - \beta (\alpha +1)  |x|^{-2} w = \Md w, 
\end{equation} 
where
\begin{equation} \label{fPT3}
\Md w=   | U+w|^\alpha  (U+w) -   |U|^\alpha U  - \beta  (\alpha +1)  |x |^{-2} w .
\end{equation}
We observe that the operator $-\Delta -  \beta (\alpha +1)  |x|^{-2}$ in~\eqref{fPT2} has good properties only if 
\begin{equation}  \label{fPShar}
\beta  (\alpha +1) < \frac {(N-2)^2} {4}, 
\end{equation} 
 the constant in Hardy's inequality. 
Inequality~\eqref{fPShar} is equivalent to $\alpha <\alpha _0$, where $\alpha _0$ is given by~\eqref{fPS0}. 
(See Lemma~\ref{eNUM1} below.) Note that $\alpha _0> \frac {2} {N-2}$, since $N >2$. 
Under the assumption~\eqref{fPShar},  the operator $H$ on $\xLn{2} (\R^N ) $ defined by
\begin{equation} \label{fPShar:b1}
\begin{cases} 
D(H)= \{ u\in \xHn{1}  (\R^N ) ;\, \Delta u + \beta (\alpha +1)  |x|^{-2} u\in \xLn{2} (\R^N )  \}   \\ H u= \Delta u + \beta  (\alpha +1)  |x|^{-2} u ,\quad  u\in D(H)
\end{cases} 
\end{equation} 
 is a negative self-adjoint  operator, hence the generator of a $C_0$ semigroup of contractions $(e^{t H}) _{ t\ge 0 }$, which is an analytic semigroup on $\xLn{2} (\R^N ) $. 
Moreover, there exist two constants $A >0$ and $a>0$ such that  the corresponding heat kernel $\Kernel (t, x, y)$ satisfies the estimate 
\begin{equation} \label{fPS12}
0 < \Kernel (t, x, y) \le A  t^{-\frac {N} {2}} e^{- \frac { |x-y| ^2 } { \cstpa  t}} h(t,x)h(t,y),
\end{equation} 
where 
\begin{equation} \label{fPSeta:2}
h(t,x)= \Bigl( 1+ \frac {\sqrt t} { |x|} \Bigr)^\eta ,
\end{equation} 
and $\eta $ is given by~\eqref{fPSeta}.  
See~\cite[Theorem~1.2]{LiskevichS}, \cite[Theorem~3]{MilmanS}, \cite[Theorem~3.10]{MoschiniT}.

Using the kernel $\Kernel$ we write equation~\eqref{fPT2} in the integral form
\begin{equation} \label{fPT2I} 
w (t) = \int  _{ \R^N  } \Kernel (t, x, y) \DIbd (y)  \xdif y + \int _0 ^t \int  _{ \R^N  }  \Kernel (t-s , x, y)  \Md w(s, y)  \xdif y \xdif s,
\end{equation}
where $\DIbd = u(0)- U(0)$.
Since $\Kernel$ is bounded from below by a term similar to the right-hand side of~\eqref{fPS12}, it follows that $\Kernel (t,x, y)$ has the singularity $ |x|^{- \eta}$ as $ |x|\to 0$ and similarly in $y$. 
Thus we see that the kernel of the operator $e^{tH}$ associated with equation~\eqref{fPT2} is more singular than the heat kernel associated with equation~\eqref{fIntro1}. Of course, the right-hand side of~\eqref{fPT2} is less singular than the right-hand side of~\eqref{fIntro1}.
In fact the worst term in $\Md w$ is of order $( \frac {  |x| } {  |x| +  \sqrt t } )^{\DFNk} |x| ^{ -2 } w$, where $\DFNk >0$ is  given by~\eqref{fDefnKappa} below. (See~\eqref{fPS9b}, \eqref{fPS9:b2}, \eqref{fPS10}.) At positive times, this term is better than $ |x|^{-\frac {2} {\alpha }} w$. However, as $t\to 0$, it behaves like  $ |x|^{-\frac {2} {\alpha }} w$. 
This, combined with the singularity of the kernel, 
excludes the possibility of carrying out a standard contraction mapping argument based on~\eqref{fPT2I}.  
Our solution to this difficulty is taken from~\cite{CDNW2} and involves a contraction mapping argument in a class of functions $w$ that are sufficiently small as $(t,x) \to (0,0)$ so as to balance the singularity of $\Md w$. 
The key point is to find such a class which is preserved by the iterative process. 
The fixed point $w$ thus obtained satisfies the integral equation~\eqref{fPT2I} and in fact solves~\eqref{fPT2} in the sense of distributions. Therefore $u=U+w$ satisfies~\eqref{fPS4:b8} in the sense of distributions. 

We do not know if the condition $\alpha <\alpha _0$ in Theorem~\ref{ePSSZ1} is necessary. 
However, if $\alpha >\alpha _0$ (i.e. $ \beta (\alpha +1) > \frac {(N-2)^2} {4}$), then our proof breaks down from the beginning, since in this case the linear heat equation with potential $ \beta (\alpha +1)  |x|^{-2}$ is ill-posed, see~\cite{BarasG, VazquezZ}. 

The results in this paper are motivated by our article~\cite{CDNW2} where we prove an analogue of Theorem~\ref{ePSSZpr2} where the self-similar solution $U$ has a regular profile. 
Such self-similar solutions have a singularity at $(t,x)= (0,0)$, which introduces some limitations in our results. In particular, we are led to consider in~\cite{CDNW2} initial values $\DIbd $ that equal $U(0, \cdot )$ in a neighborhood of the origin. The same limitation appears here, and we do not know if it is technical or not.

The rest of this paper is organized as follows. 
Section~\ref{sLemma} is devoted to some properties of the parameters we use throughout the paper.
In Section~\ref{sNonHom}, we establish some specific estimates for the nonhomogeneous heat equation with inverse square potential. 
In Section~\ref{sSetting}, we introduce the setting for the fixed point argument that we use  in Section~\ref{sFixedPoint} to prove Theorem~\ref{ePSSZ1}. 
In Section~\ref{sStatSing}, we give a description of the radially symmetric, stationary solutions of~\eqref{NLHE}, showing the existence of sign-changing solutions.   
In Section~\ref{sSingProf}, we deduce Theorem~\ref{ePSSZpr1} and Theorem~\ref{ePSSZpr2} from Theorem~\ref{ePSSZ1}. 
We collect in Appendix~\ref{sISP} some general properties, concerning mostly the linear heat equation with inverse square potential, that we use in the paper and for which we did not find a reference.

\section{Elementary inequalities} \label{sLemma} 
This section is devoted to the following elementary properties.

\begin{lmm} \label{eNUM1} 
Suppose  $N\ge 3$, and let  $\alpha _0> 0$ be defined by~\eqref{fPS0} 
and $\Lambda \in \R$ by
\begin{equation} \label{fPS0:0b} 
\Lambda = \Bigl( \frac {1} {\alpha } - \frac {N-2} {4} \Bigr)^2 -  \Bigl(  \frac {N-2} {2}  - \frac {1} {\alpha } \Bigr) .
\end{equation} 
The following properties hold. 
\begin{enumerate}[{\rm (i)}] 

\item \label{eNUM1:1b1} 
$\alpha _0$ satisfies
\begin{equation} \label{eNUM1:1} 
\frac {2} {N-2}<\alpha _0<\frac {4} {N-2} .
\end{equation} 

\item \label{eNUM1:2} 
For $0 < \alpha < \frac {4} {N-2}$, the three properties
\begin{gather} 
 \alpha  <\alpha _0,   \label{fPShar:b3} \\
\Lambda >0 ,  \label{fPShar:b2}
\end{gather} 
and~\eqref{fPShar} (where $\beta $ is given by~\eqref{bta}), are  equivalent.

\item \label{eNUM1:3} 
Let $0 < \alpha < \alpha _0$ and let $\Lambda >0$ be given by~\eqref{fPS0:0b}.  
If $\mu _1, \mu _2$ are defined by
\begin{gather} 
\mu_ 1=  \frac {1} {\alpha } - \frac {N-2} {4} - \sqrt{ \Lambda  } , \label{fDEFm1}  \\
\mu_ 2=  \frac {1} {\alpha } - \frac {N-2} {4} + \sqrt{  \Lambda } , \label{fDEFm2}
\end{gather} 
then $0 < \mu _1 < \mu _2$.
Moreover, $\frac {(N-2)^2} {4} - \beta  (\alpha +1) >0$ by Property~\eqref{eNUM1:2} above, and if 
$\rho $ is given by~\eqref{fPSrho}, then 
\begin{equation} \label{fPSrho:b1} 
\rho = 2 \mu _1 
\end{equation} 
so that $\rho >0$. 
\end{enumerate} 
\end{lmm}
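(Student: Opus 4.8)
The plan is to reduce everything to elementary algebra in the single variable $\tfrac1\alpha$, treating $\Lambda$, $\mu_1$, $\mu_2$, $\rho$, $\eta$ as explicit functions of $\tfrac1\alpha$ (and $N$), and to verify the claimed inequalities by comparing roots of quadratics. Throughout I write $p=\tfrac1\alpha$ for brevity in the scratch work; note $\tfrac{2}{N-2}<\alpha<\tfrac{4}{N-2}$ corresponds to $\tfrac{N-2}{4}<p<\tfrac{N-2}{2}$.

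\medskip\noindent\textbf{Step 1: Part \eqref{eNUM1:1b1}.}\quad First I would establish $\tfrac2{N-2}<\alpha_0<\tfrac4{N-2}$ directly from \eqref{fPS0}. Since $\alpha_0=\tfrac{4}{N-4+2\sqrt{N-1}}$, the inequality $\alpha_0<\tfrac4{N-2}$ is equivalent to $N-4+2\sqrt{N-1}>N-2$, i.e. $\sqrt{N-1}>1$, i.e. $N>2$, which holds. Likewise $\alpha_0>\tfrac2{N-2}$ is equivalent to $4(N-2)>2(N-4+2\sqrt{N-1})$, i.e. $2N-8>4\sqrt{N-1}-8$ after simplification, i.e. $N>2\sqrt{N-1}$, i.e. $N^2>4N-4$, i.e. $(N-2)^2>0$, true for $N\ge3$. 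So this part is immediate.

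\medskip\noindent\textbf{Step 2: Part \eqref{eNUM1:2}.}\quad Here I would show the three conditions \eqref{fPShar:b3}, \eqref{fPShar:b2}, \eqref{fPShar} are equivalent for $0<\alpha<\tfrac4{N-2}$. The cleanest route is to compute $\beta(\alpha+1)$ in terms of $p=\tfrac1\alpha$: from \eqref{bta}, $\beta=2p(N-2-2p)$, so $\beta(\alpha+1)=2p(N-2-2p)(p+1)/p\cdot$— wait, better: $\alpha+1=\tfrac{1+p}{p}$ is wrong; $\alpha+1 = \tfrac1p+1=\tfrac{1+p}{p}$, hmm that is $\tfrac{1}{\alpha}+1$. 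Actually $\alpha+1$ with $\alpha=1/p$ is $1/p+1$. So $\beta(\alpha+1)=2p(N-2-2p)\cdot\tfrac{1+p}{p}=2(1+p)(N-2-2p)$. Then $\tfrac{(N-2)^2}{4}-\beta(\alpha+1)=\tfrac{(N-2)^2}{4}-2(1+p)(N-2-2p)$. I would expand this and show, after collecting, that it equals $\Lambda$ up to a positive factor; in fact a short computation should give $\tfrac{(N-2)^2}{4}-\beta(\alpha+1)=4\Lambda$, using \eqref{fPS0:0b}: expanding \eqref{fPS0:0b}, $\Lambda=(p-\tfrac{N-2}{4})^2-(\tfrac{N-2}{2}-p)=p^2-\tfrac{N-2}{2}p+\tfrac{(N-2)^2}{16}-\tfrac{N-2}{2}+p$, and $4\Lambda=4p^2-2(N-2)p+\tfrac{(N-2)^2}{4}-2(N-2)+4p$, which should match $\tfrac{(N-2)^2}{4}-2(1+p)(N-2-2p)=\tfrac{(N-2)^2}{4}-2(N-2-2p+(N-2)p-2p^2)=\tfrac{(N-2)^2}{4}-2(N-2)+4p-2(N-2)p+4p^2$ — yes, these agree. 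Hence \eqref{fPShar}$\iff\Lambda>0$, giving \eqref{fPShar}$\iff$\eqref{fPShar:b2}. Then \eqref{fPShar:b2}$\iff$\eqref{fPShar:b3}: the relation between $\alpha_0$ and $\Lambda$ is that $\Lambda$, as a quadratic in $p$, vanishes precisely at the $p$ corresponding to $\alpha_0$ (and another root); one checks $\Lambda>0$ holds on the relevant side. Concretely, $\Lambda=p^2+(1-\tfrac{N-2}{2})p+\tfrac{(N-2)^2}{16}-\tfrac{N-2}{2}$ is an upward parabola in $p$; its smaller root corresponds to $\alpha=\alpha_0$, and I would verify that on $\tfrac{N-2}{4}<p<\tfrac{N-2}{2}$ (i.e. $0<\alpha<\tfrac4{N-2}$), $\Lambda>0$ is equivalent to $p$ being below that smaller root, i.e. to $\alpha<\alpha_0$. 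This requires checking the sign of $\Lambda$ at the endpoints $p=\tfrac{N-2}{4}$ and $p=\tfrac{N-2}{2}$ and comparing root positions — a finite computation.

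\medskip\noindent\textbf{Step 3: Part \eqref{eNUM1:3}.}\quad Given $\Lambda>0$, clearly $\mu_2>\mu_1$ from \eqref{fDEFm1}–\eqref{fDEFm2}. For $\mu_1>0$ I would show $p-\tfrac{N-2}{4}>\sqrt\Lambda$, which (since $p>\tfrac{N-2}{4}$ so the left side is positive) is equivalent to $(p-\tfrac{N-2}{4})^2>\Lambda=(p-\tfrac{N-2}{4})^2-(\tfrac{N-2}{2}-p)$, i.e. to $\tfrac{N-2}{2}-p>0$, i.e. $\alpha<\tfrac4{N-2}$ — which holds since $\alpha<\alpha_0<\tfrac4{N-2}$. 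The positivity $\tfrac{(N-2)^2}{4}-\beta(\alpha+1)>0$ is just Step 2. Finally $\rho=2\mu_1$: from \eqref{fPSrho}, $\rho=p-\tfrac{N-2}{2}-\sqrt{\tfrac{(N-2)^2}{4}-\beta(\alpha+1)}$; substituting $\tfrac{(N-2)^2}{4}-\beta(\alpha+1)=4\Lambda$ gives $\rho=p-\tfrac{N-2}{2}-2\sqrt\Lambda$, while $2\mu_1=2p-\tfrac{N-2}{2}-2\sqrt\Lambda$. Hmm, these differ by $p$. Let me recheck: I must have the factor wrong; likely $\tfrac{(N-2)^2}{4}-\beta(\alpha+1)=\Lambda$ up to a different constant, or I should recompute so that $\rho=2\mu_1$ comes out. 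The resolution is to redo the algebra carefully: one needs $\sqrt{\tfrac{(N-2)^2}{4}-\beta(\alpha+1)}=\tfrac{N-2}{2}-2\sqrt\Lambda$ — no. Actually the correct identity to aim for is $\tfrac{(N-2)^2}{4}-\beta(\alpha+1)=(\tfrac{N-2}{2}-2\mu_1)^2$ type, but cleanest is simply: compute both $\rho$ and $2\mu_1$ as explicit functions of $p,N$ and check equality; the discrepancy above signals I mis-expanded $4\Lambda$ versus $\tfrac{(N-2)^2}{4}-\beta(\alpha+1)$, and the honest computation will fix it. So the plan for this sub-step is: (a) express $\tfrac{(N-2)^2}{4}-\beta(\alpha+1)$ in terms of $p$; (b) express $\Lambda$ in terms of $p$; (c) observe $\tfrac{(N-2)^2}{4}-\beta(\alpha+1)=\big(2p-\tfrac{N-2}{2}-2\mu_1\big)\cdot(\text{something})$ — or more directly, since $\mu_1$ is defined as a root of $X^2-(2p-\tfrac{N-2}{2})X+(\text{const})=0$, identify that the square root in $\rho$ is exactly $2\sqrt\Lambda$ and that $\tfrac{N-2}{2}$ in $\rho$'s formula combines with the $\tfrac{N-2}{4}$ in $\mu_1$'s formula correctly after doubling. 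I expect the clean statement is $\rho=2\mu_1$ because $2\mu_1=2p-\tfrac{N-2}{2}-2\sqrt\Lambda$ and one shows $\sqrt{\tfrac{(N-2)^2}{4}-\beta(\alpha+1)}=2\sqrt\Lambda-(p-\tfrac{N-2}{2})$ is false — rather the identity must be $\tfrac{(N-2)^2}{4}-\beta(\alpha+1)=4\Lambda$ AND the $-\tfrac{N-2}{2}$ in \eqref{fPSrho} should read $-\tfrac{N-2}{4}$ after accounting, OR (most likely) I have simply mis-transcribed and the honest side-by-side expansion closes it.

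\medskip\noindent\textbf{Main obstacle.}\quad The only real difficulty is bookkeeping: making sure the quadratic $\tfrac{(N-2)^2}{4}-\beta(\alpha+1)$ in the variable $\tfrac1\alpha$ is correctly matched to $4\Lambda$, and then that \eqref{fPSrho} and \eqref{fDEFm1} combine to give exactly $\rho=2\mu_1$ with no stray terms. Everything else (the endpoint sign checks in Step 2, the positivity of $\mu_1$) is routine quadratic-inequality manipulation. So the plan is to do these expansions once, carefully, and the lemma follows.
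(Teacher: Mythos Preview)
Your approach is correct and essentially the same as the paper's: both establish the key identity $\tfrac{(N-2)^2}{4}-\beta(\alpha+1)=4\Lambda$ by direct expansion and then read off the consequences. Your confusion in Step~3 is purely a transcription slip: with $p=\tfrac1\alpha$, formula~\eqref{fPSrho} reads $\rho=\tfrac{2}{\alpha}-\tfrac{N-2}{2}-\sqrt{\,\cdot\,}=2p-\tfrac{N-2}{2}-2\sqrt{\Lambda}$, not $p-\tfrac{N-2}{2}-\sqrt{\,\cdot\,}$; once you restore the factor $2$, the identity $\rho=2\mu_1$ is immediate, exactly as the paper does. For Part~\eqref{eNUM1:2} the paper uses a slightly cleaner device than your endpoint check: it writes $\tfrac1\alpha=\tfrac1{\alpha_0}+\varepsilon$, computes $\Lambda=\varepsilon^2+\varepsilon\sqrt{N-1}$, and reads off that (within $0<\alpha<\tfrac4{N-2}$) $\Lambda>0$ iff $\varepsilon>0$ iff $\alpha<\alpha_0$. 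Note also that your statement ``$p$ below the smaller root'' has the direction reversed ($\alpha<\alpha_0$ means $p>\tfrac1{\alpha_0}$, which is the \emph{larger} root), and the range $0<\alpha<\tfrac4{N-2}$ corresponds to $p>\tfrac{N-2}{4}$ with no upper bound on $p$.
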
 

\begin{proof} 
Since
\begin{equation*} 
2 \sqrt {N-1} = \sqrt {N^2 - (N-2)^2} 
\end{equation*} 
we see that $2 \sqrt {N-1} <N$. Therefore $N-4+2\sqrt{ N-1 } < 2N-4$, hence $\alpha _0> \frac {2} {N-2}$. 
Moreover, 
\begin{equation*} 
2\sqrt{N-1} > 2, 
\end{equation*} 
so that $N-4+2\sqrt{ N-1 } > N-2$, hence $\alpha _0< \frac {4} {N-2}$. 
This proves Property~\eqref{eNUM1:1}.

We now prove Property~\eqref{eNUM1:2}.
We have
\begin{equation} \label{fFORm1} 
 \frac {(N-2)^2} {4} - \beta  (\alpha +1) =  4  \Lambda 
\end{equation} 
so that~\eqref{fPShar:b2} and~\eqref{fPShar} are equivalent. 
We write
\begin{equation*} 
\frac {1} {\alpha _0}= \frac {N-2} {4} + \frac {\sqrt {N-1} -1} {2}
\end{equation*} 
and 
\begin{equation*} 
\frac {1} {\alpha }= \frac {1} {\alpha _0} + \varepsilon 
\end{equation*} 
so that 
\begin{equation*} 
\Lambda  = \varepsilon ^2 + \varepsilon  \sqrt {N-1} .
\end{equation*} 
It follows that $\Lambda >0$ if and only if either $\varepsilon >0$, i.e. $\alpha < \alpha _0$, or else $\varepsilon < - \sqrt {N-1}$. In this last case, 
\begin{equation*} 
\frac {1} {\alpha }< \frac {1} {\alpha _0} - \sqrt {N-1} = \frac {N-2} {4} - \frac {1} {2} - \frac {\sqrt {N-1}} {2}< \frac {N-2} {4}
\end{equation*} 
so that $\alpha >\frac {4} {N-2}$. Thus we see that~\eqref{fPShar:b3} and~\eqref{fPShar:b2} are equivalent. 

To prove Property~\eqref{eNUM1:3}, we observe that $\mu _2>\mu _1$. Moreover, 
$\Lambda <  ( \frac {1} {\alpha } - \frac {N-2} {4} )^2$.
Thus $ \sqrt \Lambda <  \frac {1} {\alpha } - \frac {N-2} {4} $ so  that $\mu _1>0$.
Finally, formula~\eqref{fFORm1} yields  $\rho =2\mu _1$.
\end{proof} 

\section{The nonhomogeneous heat equation with inverse square potential} \label{sNonHom} 

In this section, we assume~\eqref{fCondAlpha}, and we use the operator $H$ defined by~\eqref{fPShar:b1} and the corresponding semigroup $(e ^{t H}) _{ t\ge 0 }$ with the kernel $\Kernel (t,x,y)$.
We let $\rho , \eta>0$ be defined by~\eqref{fPSrho} and~\eqref{fPSeta}, respectively. 
This section is devoted to estimates of 
\begin{equation*} 
\int _0^t  e^{(t-s)H} f(s)   \xdif s  ,
\end{equation*} 
for some specific right-hand sides $f$.

\begin{lmm} \label{ePSS20}
Let $c\ge 1$, $0\le b\le 2$ and $\kappa \ge 0$ ($\kappa >0$ if $b=2$) satisfy
\begin{equation} \label{fPS20:1:b1} 
 b + (c-1) \eta -\kappa < 2. 
\end{equation}  
Since $2 \eta < N-2$ by~\eqref{fPSeta}, we have $b + (c+1) \eta -\kappa  <N$, and we  fix
\begin{equation}  \label{fSPL2} 
0\le  \varepsilon < \min  \Bigl\{   \frac {1} {4}, N- (b + (1+c) \eta - \kappa) \Bigr\} .
\end{equation} 
Define $ \Psi  (t) $ for $t>0$ by
\begin{equation} \label{fPS20:2}
\Psi   (t,x)= h(t,x)^c   |x|^{-b}\Bigl( \frac { |x|} { |x| + \sqrt t  } \Bigr)^\kappa ,\quad x\in \R^N 
\end{equation} 
where $h$ is given by~\eqref{fPSeta:2}. 
It follows that for every $m>\kappa -2$ there exists $ \CSTB{m} >0$ such that for all $0<r \le \infty $,
\begin{equation} \label{fPS20:3}
\int_0^t e^{(t-s)H} \KIR \Psi   (s, \cdot ) s^{\frac {m} {2}} \xdif s\le \CSTB{m} r^\varepsilon   t^{\frac {m+2-b} {2}}  |x |^{-\varepsilon } h(t, x )  ,
\end{equation} 
for all $t>0$ and $x\in \R^N $, with $\CSTB{m} \to 0$  as $m \to \infty$.
\end{lmm}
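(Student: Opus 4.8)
The plan is to insert the pointwise heat‑kernel bound~\eqref{fPS12} and reduce~\eqref{fPS20:3} to an explicit space‑time integral estimate. Writing $\tau=t-s$ and using that $\theta\mapsto h(\theta,x)$ is nondecreasing, so that $h(t-s,x)\le h(t,x)$, the quantity to bound is at most
\[
A\,h(t,x)\int_0^t\tau^{-\frac N2}\Bigl[\int_{\{|y|<r\}}e^{-\frac{|x-y|^2}{\cstpa\tau}}\,h(\tau,y)\,h(s,y)^c\,|y|^{-b}\Bigl(\frac{|y|}{|y|+\sqrt s}\Bigr)^{\kappa}\xdif y\Bigr]s^{\frac m2}\xdif s .
\]
I would reduce the bracketed integral to a finite sum of monomial pieces: apply $h(\theta,y)^p\le C_p\bigl(1+(\sqrt\theta/|y|)^{p\eta}\bigr)$ to $h(\tau,y)$ and to $h(s,y)^c$, and bound $\bigl(|y|/(|y|+\sqrt s)\bigr)^\kappa\le\min\{1,(|y|/\sqrt s)^\kappa\}$; each resulting piece is a power of $t$, $s$ and $\tau$ (with a possible extra $s^{-\kappa/2}$) times $\int_{\{|y|<r\}}e^{-|x-y|^2/(\cstpa\tau)}|y|^{-\sigma}\xdif y$, where the worst exponent is $\sigma=b+(c+1)\eta-\kappa$. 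The case $r=\infty$ (i.e. with no factor $r^\varepsilon|x|^{-\varepsilon}$) is handled the same way and is simpler, so I would concentrate on $r<\infty$.

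The second ingredient is the \emph{core spatial estimate}: for $0\le\sigma$ with $\sigma+\varepsilon<N$,
\[
\int_{\{|y|<r\}}\tau^{-\frac N2}e^{-\frac{|x-y|^2}{\cstpa\tau}}|y|^{-\sigma}\xdif y\le C\,r^{\varepsilon}|x|^{-\varepsilon}\tau^{-\frac\sigma2},
\]
which follows from $\KIR\le (r/|y|)^\varepsilon$, the elementary bound $\int_{\R^N}\tau^{-N/2}e^{-|x-y|^2/(\cstpa\tau)}|y|^{-\sigma'}\xdif y\le C\min\{\tau^{-\sigma'/2},|x|^{-\sigma'}\}$ valid for $0\le\sigma'<N$, and the inequality $\min\{\tau^{-(\sigma+\varepsilon)/2},|x|^{-(\sigma+\varepsilon)}\}\le\tau^{-\sigma/2}|x|^{-\varepsilon}$. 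The hypothesis $\varepsilon<N-(b+(1+c)\eta-\kappa)$ is exactly what guarantees $\sigma+\varepsilon<N$ for the worst $\sigma$. When a factor $(|y|/\sqrt s)^\kappa$ is retained it lowers $\sigma$ by $\kappa$ and contributes $s^{-\kappa/2}$; and when the singular part $(\sqrt\tau/|y|)^\eta$ of $h(\tau,y)$ is used, its prefactor $(\sqrt\tau)^\eta$ is absorbed by the volume element (precisely because $b+(1+c)\eta-\kappa<N$) and does not worsen the power of $\tau$.

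Assembling, each piece is bounded by $C\,r^{\varepsilon}|x|^{-\varepsilon}h(t,x)\,t^{\gamma_1}\int_0^t\tau^{-\gamma_2}s^{\frac{m-\kappa'}2}\xdif s$ with $\kappa'\in\{0,\kappa\}$, and $\int_0^t\tau^{-\gamma_2}s^{(m-\kappa')/2}\xdif s$ equals $t^{(m-\kappa')/2-\gamma_2+1}$ times an explicit constant (a Beta integral) which is finite provided $\kappa'<m+2$ and $\gamma_2<1$, and which tends to $0$ as $m\to\infty$. Since $m>\kappa-2\ge\kappa'-2$ the first requirement holds; collecting the powers of $t$ gives $t^{(m+2-b)/2}$, and $\CSTB{m}\to0$ follows. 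The main obstacle is the remaining requirement $\gamma_2<1$ in every piece, i.e. that all effective exponents of $\tau$ stay below $1$: this is where the precise form~\eqref{fPS20:1:b1}, $b+(c-1)\eta-\kappa<2$, must be exploited (rather than a weaker condition), and it forces one to split the $s$‑integral according to whether $s\le t-s$ or $s>t-s$ and, in the delicate region $s>t-s$, to split the $y$‑integral according to the size of $|y|$ relative to $\sqrt\tau$ and $\sqrt s$, keeping careful track of how the weight $h(t,x)$ and the gain $|x|^{-\varepsilon}$ absorb the surplus singularity at $x=0$. The contribution of $\{s<t/2\}$ is better still—it carries an extra factor $2^{-m/2}$—and is routine.
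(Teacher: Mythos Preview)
Your overall strategy is the paper's: insert the kernel bound~\eqref{fPS12}, expand $h(\tau,y)\,h(s,y)^c$ into monomial pieces, reduce each to a heat–kernel integral of a pure power $|y|^{-\sigma}$ via $\KIR\le (r/|y|)^\varepsilon$ and $e^{t\Delta}|\cdot|^{-p}\le C(t+|x|^2)^{-p/2}$, and finish with Beta integrals. You also correctly locate the one genuine obstacle: for the pieces carrying the singular part $(\sqrt s/|y|)^{c\eta}$ of $h(s,y)^c$, your $\tau$–exponent is $\gamma_2=\tfrac12(b+c\eta-\kappa)$, whereas~\eqref{fPS20:1:b1} only gives $b+(c-1)\eta-\kappa<2$. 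There is an $\eta$ missing.

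The gap is in your very first step. By replacing $h(t-s,x)$ by $h(t,x)$ and pulling it outside the integral, you discard precisely the $\tau$–dependence that closes the estimate. The paper does \emph{not} do this for the two bad pieces (its $I_3$ and $I_4$). Instead it keeps $h(\tau,x)$, retains the sharper spatial bound $(\tau+|x|^2)^{-\sigma'/2}$ rather than $\tau^{-\sigma'/2}$, peels off one factor $(\tau+|x|^2)^{-\eta/2}$, and uses the pointwise cancellation
\[
h(\tau,x)\,(\tau+|x|^2)^{-\eta/2}\;\le\; C\,|x|^{-\eta}\;\le\; C\,t^{-\eta/2}\,h(t,x),
\]
valid because $h(\tau,x)=\bigl((|x|+\sqrt\tau)/|x|\bigr)^\eta$ and $(\tau+|x|^2)^{1/2}\sim |x|+\sqrt\tau$. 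This converts exactly one unit of $\eta$ in the $\tau$–exponent into the harmless weight $t^{-\eta/2}h(t,x)$, bringing $\gamma_2$ down to $\tfrac12(b+(c-1)\eta-\kappa)<1$. No splitting of the $s$– or $y$–integral is needed; the four Beta integrals then converge directly and give $\CSTB{m}\to0$ by dominated convergence.

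Your proposed rescue—splitting $\{s\le t-s\}$ versus $\{s>t-s\}$ and then splitting in $|y|$—does not recover this, because after you have already used up the factor $h(t,x)$ on the left, any extra $|x|^{-\eta}$ produced in the spatial estimate would force $h(t,x)^2$ on the right, which is too much. Concretely, in the region $|x|^2<\tau<t/2$ with $b+c\eta-\kappa>2$, the $\tau$–integral of $\tau^{-(b+c\eta-\kappa)/2}$ contributes $|x|^{2-(b+c\eta-\kappa)}$, and one checks that $h(t,x)$ times this exceeds the target $t^{(m+2-b)/2}h(t,x)$ by the unbounded factor $(t/|x|^2)^{(b+c\eta-\kappa-2)/2}$. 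The fix is not more splitting but simply to postpone the bound $h(\tau,x)\le h(t,x)$ until after the cancellation above has been used on the two offending pieces.
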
 

\begin{proof} 
We write using~\eqref{fPS12},
\begin{equation*}
\begin{split} 
e^{(t-s)H}  \KIR \Psi (s) (x)   &  \le A h(t-s, x )   (t -s )^{-\frac {N} {2}}  \int  _{ \R^N  }  e^{- \frac { | x -y| ^2 } { \cstpa  (t-s) }} \KIR h(t-s,y) \Psi (s,y)  \xdif y  \\ & = A (a\pi )^{\frac {N} {2}}  h(t -s , x )  e^{\frac {a} {4} (t-s) \Delta }  [ \KIR h(t-s, \cdot ) \Psi (s,\cdot ) ] (x) ,
\end{split} 
\end{equation*} 
so that
\begin{equation*}
e^{(t-s)H}  \KIR \Psi  (s)  \le  C   h(t -s ,x)  e^{\frac {a} {4} (t-s) \Delta }  \Bigl[ \KIR h(t-s, \cdot ) h(s,\cdot )^c |\cdot  |^{-b} \Bigl( \frac {  | \cdot |} {  | \cdot | + \sqrt s  } \Bigr)^\kappa  \Bigr]   . 
\end{equation*} 
Since
\begin{equation*}
h(t-s, y ) h(s, y )^c  \le C  \Bigl( 1+ \frac { (t-s)^{\frac {\eta } {2}} } { |y|^\eta} \Bigr) \Bigl( 1+ \frac { s ^{\frac {\eta c} {2}} } { |y|^{\eta c}} \Bigr)  \le C  \Bigl( 1+ \frac { (t-s)^{\frac {\eta } {2}} } { |y|^\eta}  + \frac { s ^{\frac {\eta c} {2}} } { |y|^{\eta c}} + \frac { (t-s)^{\frac {\eta } {2}}  s ^{\frac {\eta c} {2}} } { |y|^{(1 + c) \eta}}  \Bigr) ,
\end{equation*} 
we deduce that
\begin{equation} \label{ePSZ4:4}
e^{(t-s)H}  \KIR \Psi  (s) \le  C  h(t -s ,x)  [  I_1 + (t-s)^{\frac {\eta } {2}} I_2 + s ^{\frac {\eta c} {2}} I_3 +  (t-s)^{\frac {\eta } {2}}  s ^{\frac {\eta c} {2}} I_4 ]
\end{equation} 
where
\begin{align*} 
I_1 &=   e^{\frac {a} {4} (t-s) \Delta }  \Bigl( \KIR   |\cdot  |^{-b} \Bigl( \frac {  | \cdot |} {  | \cdot | + \sqrt s  } \Bigr)^\kappa  \Bigr) ,  \\
I_2 &=   e^{\frac {a} {4} (t-s) \Delta }  \Bigl( \KIR   |\cdot  |^{-b- \eta} \Bigl( \frac {  | \cdot |} {  | \cdot | + \sqrt s  } \Bigr)^\kappa  \Bigr) , \\
I_3 &=   e^{\frac {a} {4} (t-s) \Delta }  \Bigl( \KIR  |\cdot  |^{-b - \eta c} \Bigl( \frac {  | \cdot |} {  | \cdot | + \sqrt s  } \Bigr)^\kappa  \Bigr)  , \\
I_4 &=   e^{\frac {a} {4} (t-s) \Delta }  \Bigl( \KIR   |\cdot  |^{-b- (1+ c ) \eta } \Bigl( \frac {   | \cdot |} {  | \cdot | + \sqrt s  } \Bigr)^\kappa  \Bigr) .
\end{align*} 
We note that
\begin{equation} \label{fSPL1} 
\KIR \le r^\varepsilon  |y|^{- \varepsilon } ,
\end{equation} 
and we recall that if $0\le  p <N$, then 
\begin{equation}  \label{ePSZ4:10}
e^{t \Delta }  | \cdot |^{- p } \le C ( t +  |x|^2)^{- \frac {p } {2}} .
\end{equation} 
See e.g.~\cite[Corollary~8.3]{CazenaveDEW}.

Let $\kappa _1 =\min\{\kappa ,b\}$. We have
\begin{equation}  \label{ePSZ4:9}
\begin{split} 
I_1 & \le r^\varepsilon 
e^{\frac {a} {4} (t-s) \Delta }  \Bigl(   |\cdot  |^{-b- \varepsilon } \Bigl( \frac {  | \cdot |} {  | \cdot | + \sqrt s  } \Bigr)^{\kappa _1} \Bigr) 
 \le r^\varepsilon 
 s^{-\frac {\kappa _1} {2}} e^{\frac {a} {4} (t-s) \Delta }  (   |\cdot  |^{- (b +\varepsilon - \kappa _1 ) } )
 \\ & \le C r^\varepsilon  s^{-\frac {\kappa _1} {2}}( t-s +  |x|^2 )^ {-\frac {b + \varepsilon -\kappa _1 } {2}}
 \le C r^\varepsilon  |x|^{- \varepsilon }  s^{-\frac {\kappa _1} {2}}(t-s )^{-\frac {b -\kappa  _1 } {2}} ,
\end{split} 
\end{equation} 
where we used~\eqref{fSPL1}, \eqref{ePSZ4:10} and the property $0\le b + \varepsilon - \kappa _1 \le b +\varepsilon \le \frac {9} {4} < N$.

Similarly, letting $\kappa _2 = \min \{ \kappa ,b + \eta \}$, 
\begin{equation}  \label{ePSZ4:12}
I_2  \le r^\varepsilon   s^{-\frac {\kappa _2 } {2}} e^{\frac {a} {4} (t-s) \Delta }  (   |\cdot  |^{-b-   \eta -\varepsilon  + \kappa _2 } ) 
 \le C r^\varepsilon   s^{-\frac {\kappa _2 } {2}} ( t-s +  |x|^2 )^ {-\frac {b +  \eta +\varepsilon  - \kappa _2 } {2}} 
 \le C r^\varepsilon  |x|^{- \varepsilon }   s^{-\frac {\kappa _2  } {2}} ( t-s  )^ {-\frac {b +  \eta - \kappa _2 } {2}} ,
\end{equation} 
where we used~\eqref{fSPL1}, \eqref{ePSZ4:10} and the property 
\begin{equation*} 
0\le  b +  \eta +\varepsilon  - \kappa _2 \le  b +  \eta +\varepsilon  \le   \frac {9} {4} +  \eta  \le   \frac {9} {4} + \frac {N-2} {2}  <N. 
\end{equation*} 

Next, setting $\kappa _3 = \min \{  \kappa , b+c\eta  \}$,
\begin{equation}  \label{ePSZ4:13}
\begin{split} 
I_3 & \le  r^\varepsilon  s^{-\frac {\kappa _3 } {2}} e^{\frac {a} {4} (t-s) \Delta }  (   |\cdot  |^{-b- c \eta- \varepsilon  + \kappa _3 } ) 
 \le C r^\varepsilon  s^{-\frac {\kappa _3} {2}} ( t-s +  |x|^2 )^ {-\frac {b + c \eta +\varepsilon  - \kappa _3 } {2}} \\
& \le C r^\varepsilon  |x|^{- \varepsilon }  s^{-\frac {\kappa _3} {2}} ( t-s  )^ {-\frac {b + ( c-1) \eta - \kappa  _3} {2}} ( t-s +  |x|^2 )^ {-\frac {  \eta   } {2}} \\
& \le C r^\varepsilon  |x|^{- \varepsilon }  s^{-\frac {\kappa _3} {2}} ( t-s  )^ {-\frac {b + ( c-1) \eta - \kappa  _3} {2}}  \frac {1} { |x|^\eta h(t-s, x)} .
\end{split} 
\end{equation} 
Here we used~\eqref{fSPL1}, \eqref{ePSZ4:10} and the property  $0\le b + c \eta +\varepsilon  - \kappa _3 <N$.
The last inequality is immediate if $\kappa _3= b + c \eta$; and if $\kappa _3 = \kappa $, it follows from $b +  c \eta +\varepsilon  - \kappa  \le b + (1+c) \eta +\varepsilon  - \kappa <N$ by~\eqref{fSPL2}.

Furthermore, setting $\kappa _4 =  \min \{  \kappa , b+ (1 + c) \eta  \}$,
\begin{equation}  \label{ePSZ4:11}
\begin{split} 
I_4 & \le  r^\varepsilon  s^{-\frac {\kappa _4 } {2}} e^{\frac {a} {4} (t-s) \Delta }  (   |\cdot  |^{-b- (1+ c ) \eta- \varepsilon  + \kappa _4} )  \le C r^\varepsilon  s^{-\frac {\kappa _4} {2}} ( t-s +  |x|^2 )^ {-\frac {b + (1+ c ) \eta + \varepsilon  - \kappa _4 } {2}} \\
& \le C r^\varepsilon  |x|^{- \varepsilon }  s^{-\frac {\kappa _4} {2}}  (t-s)^{- \frac {b + c \eta - \kappa _4} {2}} ( t-s +  |x|^2 )^ {-\frac { \eta   } {2}}  \le C r^\varepsilon  |x|^{- \varepsilon }  s^{-\frac {\kappa _4} {2}}  (t-s)^{- \frac {b + c \eta - \kappa _4} {2}}  \frac {1} { |x|^\eta h(t-s, x)} .
\end{split} 
\end{equation} 
In~\eqref{ePSZ4:11} we used~\eqref{fSPL1}, \eqref{ePSZ4:10} and the property  $0\le b + (1+ c) \eta +\varepsilon  - \kappa _4 <N$.
The last inequality is immediate if $\kappa _4= b + (1 + c) \eta$; and if $\kappa _4 = \kappa $, it follows from~\eqref{fSPL2}.
We deduce from~\eqref{ePSZ4:4}, \eqref{ePSZ4:9}, \eqref{ePSZ4:12}, \eqref{ePSZ4:13} and~\eqref{ePSZ4:11} that
\begin{equation*}
\begin{split} 
 r^{ - \varepsilon } |x|^{ \varepsilon }   e^{(t-s)H}  \KIR \Psi  (s)  \le  & C  h(t -s ,x)  [ s^{-\frac {\kappa _1} {2}}(t-s )^{-\frac {b-\kappa _1 } {2}} + s^{-\frac {\kappa _2} {2}}(t-s )^{-\frac {b-\kappa _2 } {2}} ] \\ &  +   C   |x|^{- \eta }  [ s ^{\frac {\eta c -\kappa _3} {2}}  ( t-s  )^ {-\frac {b + (c - 1) \eta - \kappa _3 } {2}} + s ^{\frac {\eta c -\kappa _4 } {2}}  ( t-s  )^ {-\frac {b + (c - 1) \eta - \kappa _4 } {2}} ]  .
\end{split} 
\end{equation*} 
Since $h(t-s, x) \le h(t,x)$ and $ |x|^{- \eta} \le t^{- \frac {\eta } {2}} h(t,x) $, we deduce that
\begin{equation*}
\begin{split} 
 r^{ - \varepsilon } |x|^{ \varepsilon }   e^{(t-s)H}  \KIR \Psi  (s)  \le  & C  h(t  ,x)  \bigl[ s^{-\frac {\kappa _1 } {2}}(t-s )^{-\frac {b-\kappa  _1 } {2}}  +s^{-\frac {\kappa _2 } {2}}(t-s )^{-\frac {b-\kappa  _2 } {2}}   \\ & +  t^{- \frac {\eta } {2}}  \bigl(  s ^{\frac {\eta c -\kappa _3} {2}}  ( t-s  )^ {-\frac {b + (c - 1) \eta - \kappa _3 } {2}} +  s ^{\frac {\eta c -\kappa _4} {2}}  ( t-s  )^ {-\frac {b + (c - 1) \eta - \kappa _4 } {2}} \bigr) \bigr] .
\end{split} 
\end{equation*} 
Multiplying by $s^{ \frac {m } {2}} $ and integrating in $s\in (0,t)$, we obtain~\eqref{fPS20:3} with
\begin{equation} \label{ePSZ4:15}
B_m =  C  \sum_{ j=1 }^2  \int _0^1 \sigma ^{ \frac {m - \kappa _j } {2}}( 1- \sigma  )^{-\frac {b-\kappa  _j } {2}} \xdif \sigma     + 
C \sum_{ j=3 }^4 \int _0^1 \sigma  ^{\frac { m + \eta c -\kappa _j} {2}}  (1- \sigma   )^ {-\frac {b + (c - 1) \eta - \kappa _j } {2}}  \xdif \sigma .
\end{equation} 
The integrals in~\eqref{ePSZ4:15} are finite. Indeed, for all $j\in \{1, 2, 3, 4\}$,
\begin{equation*} 
\min  \Bigl\{ \frac {m - \kappa _j } {2}, \frac { m + \eta c -\kappa _j } {2} \Bigr\} \ge \frac { m  -\kappa } {2} > -1.
\end{equation*} 
Moreover,
$\frac {b-\kappa  _j  } {2} < \frac {b } {2} \le 1$ for $j=1,2$. Furthermore, if $\kappa _j= \kappa $ for $j=3$ or $j=4$, then
\begin{equation*} 
\frac {b + (c - 1) \eta - \kappa _j } {2}= \frac {b + (c - 1) \eta - \kappa  } {2} <1
\end{equation*} 
by~\eqref{fPS20:1:b1}.
If $\kappa _3= b+ c\eta$, then $\frac {b + (c - 1) \eta - \kappa _3 } {2}= - \frac { \eta  } {2} < 0<1$. Similarly if 
$\kappa _4= b+ (1 + c) \eta$, then $\frac {b + (c - 1) \eta - \kappa _4 } {2}= -\eta< 0<1$.
Finally, the property $B_m \to 0$ follows by dominated convergence.
\end{proof}

\begin{lmm} \label{ePSbck} 
Let 
\begin{align*} 
(b_1, c_1, \kappa _1) &= (2, 1, \rho ), \\ (b_2, c_2, \kappa _2) &= (0, 1+\alpha , 0 ), \\ (b_3, c_3, \kappa _3) &= ( {\textstyle { \frac {2(\alpha -1)}  {\alpha }}}, 2, 0) , \\  (b_4, c_4, \kappa _4) &= (2, 1, \alpha \rho ). 
\end{align*} 
It follows that $b_3<2$ and that the triplets $(b_j, c_j, \kappa _j)$ satisfy~\eqref{fPS20:1:b1}
for $j=1,2,3,4$. 
\end{lmm}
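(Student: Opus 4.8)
The plan is to prove Lemma~\ref{ePSbck} by substituting each of the four triplets into the inequality~\eqref{fPS20:1:b1}, so that every case collapses to an elementary statement about $\alpha$, $\eta$, $\rho$ that is already available from Lemma~\ref{eNUM1}. I would organize this into three short steps, and I do not expect a genuine obstacle: the only input beyond arithmetic is the bound $\eta<\frac{2}{\alpha}$.

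First I would record that bound. It is the first half of Remark~\ref{ePSSZ1:rem1}~\eqref{ePSSZ1:rem1:5}: by~\eqref{fPSeta}, $\eta=\frac{N-2}{2}-\sqrt{\frac{(N-2)^2}{4}-\beta(\alpha+1)}$ with strictly positive radicand (Lemma~\ref{eNUM1}~\eqref{eNUM1:3}), so $\eta<\frac{N-2}{2}$; while $\alpha<\alpha_0<\frac{4}{N-2}$ by~\eqref{fCondAlpha} and~\eqref{eNUM1:1} gives $\frac{2}{\alpha}>\frac{N-2}{2}$. Hence $\eta<\frac{2}{\alpha}$, i.e.\ $\alpha\eta<2$. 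I would also note that $\rho>0$ by Lemma~\ref{eNUM1}~\eqref{eNUM1:3}, so $\alpha\rho>0$.

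Second, $b_3<2$ is immediate, since $b_3=\frac{2(\alpha-1)}{\alpha}=2-\frac{2}{\alpha}$ and $\alpha>0$. Third, I would check~\eqref{fPS20:1:b1} case by case. For $(b_1,c_1,\kappa_1)=(2,1,\rho)$ the left-hand side of~\eqref{fPS20:1:b1} is $2+(1-1)\eta-\rho=2-\rho<2$. For $(b_2,c_2,\kappa_2)=(0,1+\alpha,0)$ it is $0+\bigl((1+\alpha)-1\bigr)\eta-0=\alpha\eta<2$. For $(b_3,c_3,\kappa_3)=\bigl(\frac{2(\alpha-1)}{\alpha},2,0\bigr)$ it is $\bigl(2-\frac{2}{\alpha}\bigr)+(2-1)\eta-0=2+\bigl(\eta-\frac{2}{\alpha}\bigr)<2$. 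For $(b_4,c_4,\kappa_4)=(2,1,\alpha\rho)$ it is $2+(1-1)\eta-\alpha\rho=2-\alpha\rho<2$. Since these are all the triplets, the statement follows; the whole argument is a one-line verification per case once $\eta<\frac{2}{\alpha}$ is in hand.
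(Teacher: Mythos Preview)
Your proof is correct and follows essentially the same approach as the paper: you first record the key inequality $\eta<\frac{N-2}{2}<\frac{2}{\alpha}$ (the paper's~\eqref{fDDDe1}) and then verify~\eqref{fPS20:1:b1} for each triplet by the same one-line substitutions. The only cosmetic difference is that you make the observation $b_3=2-\tfrac{2}{\alpha}<2$ explicit, whereas the paper leaves it implicit.
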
 

\begin{proof} 
We first note that by~\eqref{fPSeta} 
\begin{equation} \label{fDDDe1} 
\eta < \frac {N-2} {2} < \frac {2} {\alpha },
\end{equation} 
since $\alpha <\frac {4} {N-2}$.
Next,
\begin{equation*} 
 b_1 + (c_1 -1) \eta -\kappa _1 = 2 - \rho <2.
\end{equation*} 
For $j=2$,
\begin{equation*} 
 b_2 + (c_2 -1) \eta -\kappa _2 = \alpha \eta < 2 
\end{equation*}
by~\eqref{fDDDe1}. 
For $j=3$,  
\begin{equation*} 
 b_3 + (c_3 -1) \eta -\kappa _3 = 2 - \frac {2} {\alpha } + \eta < 2
\end{equation*}
by~\eqref{fDDDe1}. 
For $j=4$,
\begin{equation*} 
 b_4 + (c_4 -1) \eta -\kappa _4 = 2- \alpha \rho <2.
\end{equation*} 
This completes the proof.
\end{proof} 

\begin{lmm} \label{ePSRM} 
Let 
\begin{equation} \label{fDefnKappa} 
\DFNk = \rho \min \{ 1 ,\alpha \}  
\end{equation} 
and let 
\begin{equation} \label{fPSGS} 
g(t,x)= 
  \Bigl( \frac {  |x| } {  |x| + \sqrt t  } \Bigr)^{\DFNk  }  |x|^{-2} + (\CSTK h(t,x))^{\alpha } + \CSTK \widetilde{ g} (t,x)
\end{equation} 
for a.a. $t>0$, $x\in \R^N $, where $\CSTK \ge 0$, $h$ is given by~\eqref{fPSeta:2}, and
\begin{equation*} 
 \widetilde{ g} (t,x) = 
\begin{cases} 
0 & \alpha \le 1 \\
 |x|^{-\frac {2(\alpha -1)} {\alpha }} h(t,x) & \alpha >1 .
\end{cases} 
\end{equation*} 
It follows that there exists a constant $ \CSTu $ such that
\begin{align} 
 g(t, x) &\le \CSTu  ( 1 +  |x|^{-2} ) \label{fPSRM:b1} \\
  g(t, x) h(s ,x) &\le \CSTu  ( 1 +  |x|^{-\frac {N+2} {2}} ) \label{fPSRM:b2} \\
    g(t, x) h(s ,x) h(\sigma ,x)  &\le \CSTu   ( 1 +  |x|^{-N + \vartheta } ) \label{fPSRM:b3} 
\end{align} 
for a.a. $t,s,\sigma \in (0,1 ) $ and $x\in \R^N $, where 
\begin{equation} \label{fDefndelta} 
\vartheta = \frac {N-2} {2} - \eta >0.
\end{equation} 
Moreover, there exists $\varepsilon _0>0$ such that if $0\le \varepsilon < \varepsilon _0$ is fixed, then for every $m>  \max \{ \rho ,\alpha \rho \}  -2$, 
\begin{equation} \label{fPSRM} 
\int_0^t e^{(t-s)H} \KIR g(s) h(s)s^{\frac {m} {2}}  \xdif s  \le \CSTR{m} r^{\varepsilon }  t^{\frac {m} {2}}  |x|^{-\varepsilon } h(t,x)
\end{equation} 
where 
\begin{equation}  \label{fPSDZ16} 
 \CSTR{m}  \goto_{m\to \infty} 0 .
\end{equation}   
\end{lmm}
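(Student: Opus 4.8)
The plan is to establish the three pointwise inequalities \eqref{fPSRM:b1}--\eqref{fPSRM:b3} by elementary estimates on powers of $|x|$, and then to derive the integral bound \eqref{fPSRM} by writing $g(s)h(s)$ exactly as a finite sum of model profiles of the type \eqref{fPS20:2}, whose triplets $(b,c,\kappa)$ are precisely those listed in Lemma~\ref{ePSbck}, and applying Lemma~\ref{ePSS20} to each one.

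For the pointwise bounds I would split $\R^N$ into $\{|x|\ge 1\}$ and $\{|x|<1\}$, keeping in mind that $t,s,\sigma\in(0,1)$. On $\{|x|\ge 1\}$ one has $1\le h(\tau,x)\le 2^\eta$ for $0<\tau<1$ and $|x|^{-\cstpa}\le 1$ for every $\cstpa\ge 0$, so $g$, $gh$ and $ghh$ are bounded by a constant and \eqref{fPSRM:b1}--\eqref{fPSRM:b3} are immediate. On $\{|x|<1\}$ one has $h(\tau,x)=(1+\sqrt\tau/|x|)^\eta\le (2/|x|)^\eta$, i.e. $h(\tau,x)\le C|x|^{-\eta}$. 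Decomposing $g=P_1+P_2+P_3$ with $P_1=(\tfrac{|x|}{|x|+\sqrt s})^{\DFNk}|x|^{-2}\le|x|^{-2}$, $P_2=(\CSTK h)^\alpha\le C|x|^{-\alpha\eta}$, and $P_3=\CSTK\widetilde g$ (which is $0$ if $\alpha\le 1$ and $\le C|x|^{-(2-2/\alpha)-\eta}$ if $\alpha>1$), each of \eqref{fPSRM:b1}--\eqref{fPSRM:b3} reduces to checking that the negative power of $|x|$ produced by multiplying a given $P_i$ by zero, one, or two factors $|x|^{-\eta}$ has exponent at most $2$, $\tfrac{N+2}{2}$, or $N-\vartheta=\tfrac{N+2}{2}+\eta$, respectively. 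All these follow from the elementary inequalities $\eta<\tfrac{N-2}{2}<\tfrac2\alpha$ (see \eqref{fDDDe1}) and $\alpha<\tfrac4{N-2}$ (see \eqref{fCondAlpha}), which together give $(\alpha+1)\eta<\tfrac{N+2}{2}$; the tightest case is $P_3hh$ in \eqref{fPSRM:b3}, requiring $(2-\tfrac2\alpha)+3\eta\le\tfrac{N+2}{2}+\eta$, i.e. $(2-\tfrac2\alpha)+2\eta\le\tfrac{N+2}{2}$, which holds since $(2-\tfrac2\alpha)+2\eta<N-\tfrac2\alpha<\tfrac{N+2}{2}$ using $2\eta<N-2$ and $\alpha<\tfrac4{N-2}$. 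Finally $\vartheta>0$ since $\eta<\tfrac{N-2}{2}$.

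For \eqref{fPSRM}, observe that $g(s)h(s)$ is, exactly, the sum of at most three functions of the form \eqref{fPS20:2}: the term $(\CSTK h(s))^\alpha h(s)=\CSTK^\alpha h(s)^{\alpha+1}$ has triplet $(0,1+\alpha,0)$; the term $(\tfrac{|x|}{|x|+\sqrt s})^{\DFNk}|x|^{-2}h(s)$ has triplet $(2,1,\rho)$ if $\alpha\ge1$ and $(2,1,\alpha\rho)$ if $\alpha<1$ (by \eqref{fDefnKappa}); and, when $\alpha>1$, the term $\CSTK\widetilde g(s)h(s)=\CSTK|x|^{-\frac{2(\alpha-1)}{\alpha}}h(s)^2$ has triplet $(\tfrac{2(\alpha-1)}{\alpha},2,0)$. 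Each of these is one of the four triplets of Lemma~\ref{ePSbck}, hence satisfies \eqref{fPS20:1:b1}; moreover $c\ge1$, $0\le b\le2$, and whenever $b=2$ the exponent $\kappa$ is $\rho$ or $\alpha\rho$, both positive since $\rho>0$ by Lemma~\ref{eNUM1}. Thus Lemma~\ref{ePSS20} applies to each of them. I then set
\[
\varepsilon_0=\min_{1\le j\le4}\Bigl\{\,\tfrac14,\; N-\bigl(b_j+(1+c_j)\eta-\kappa_j\bigr)\,\Bigr\}>0,
\]
the positivity being the remark preceding \eqref{fSPL2} (which uses \eqref{fPS20:1:b1} and $2\eta<N-2$). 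For $0\le\varepsilon<\varepsilon_0$ and $m>\max\{\rho,\alpha\rho\}-2$ (which in particular forces $m>\kappa_j-2$ for each occurring $\kappa_j\in\{0,\rho,\alpha\rho\}$), Lemma~\ref{ePSS20} gives for each profile $\Psi_j$
\[
\int_0^t e^{(t-s)H}\KIR\Psi_j(s)\,s^{\frac m2}\xdif s\le \CSTB{m}^{(j)}\,r^\varepsilon\,t^{\frac{m+2-b_j}{2}}\,|x|^{-\varepsilon}h(t,x),\qquad \CSTB{m}^{(j)}\goto_{m\to\infty}0 .
\]
Since $0\le b_j\le2$ and we work for $0<t<1$ (as in \eqref{fPSRM:b1}--\eqref{fPSRM:b3}), $t^{\frac{m+2-b_j}{2}}\le t^{\frac m2}$; summing the (at most three) contributions and absorbing the constants $\CSTK^\alpha$, $\CSTK$ yields \eqref{fPSRM} with $\CSTR{m}=C(1+\CSTK^\alpha+\CSTK)\sum_j\CSTB{m}^{(j)}$, which tends to $0$ as $m\to\infty$, proving \eqref{fPSDZ16}.

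The proof is largely bookkeeping, and I expect the only delicate point to be the pointwise estimates: one must verify that multiplying $g$ by one or two copies of $h$ does not push the singularity of $g$ at the origin past the prescribed exponents $2$, $\tfrac{N+2}{2}$, $N-\vartheta$. This is exactly where the hypotheses $\eta<\tfrac{N-2}{2}$, $\alpha\eta<2$, $\alpha<\tfrac4{N-2}$ (equivalently $(\alpha+1)\eta<\tfrac{N+2}{2}$) are used, and where the definition $\DFNk=\rho\min\{1,\alpha\}$ in \eqref{fDefnKappa} and the bound $b_3=\tfrac{2(\alpha-1)}{\alpha}<2$ are arranged precisely so that the model triplets of Lemma~\ref{ePSbck}, and hence Lemma~\ref{ePSS20}, become applicable.
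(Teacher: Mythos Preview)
Your proposal is correct and follows essentially the same approach as the paper: the pointwise bounds are obtained by bounding $h$ by $C|x|^{-\eta}$ near the origin and checking the resulting exponents against $\eta<\tfrac{N-2}{2}<\tfrac{2}{\alpha}$, and the integral estimate~\eqref{fPSRM} is obtained by recognizing $g(s)h(s)$ as a sum of model profiles~\eqref{fPS20:2} with the triplets of Lemma~\ref{ePSbck} and invoking Lemma~\ref{ePSS20}. You are in fact slightly more careful than the paper in one respect: Lemma~\ref{ePSS20} yields $t^{(m+2-b_j)/2}$, which for $b_j<2$ exceeds $t^{m/2}$ only when $t\le 1$; you note this explicitly, whereas the paper leaves it implicit (it is harmless since the lemma is only applied with $t<T<\tfrac14$).
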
 
\begin{proof} 
We first prove~\eqref{fPSRM:b1}-\eqref{fPSRM:b3}.  
It follows from~\eqref{fPSeta:2} and~\eqref{fPSGS} that for $0\le t\le 1$
\begin{align} 
 g(t, x) &\le C( 1 +  |x|^{-2} +  |x|^{-\alpha \eta} +  |x|^{-2 + \frac {2} {\alpha } -\eta}  ) \label{fPSRM:b4}  \\
   g(t, x) h(s,x) &\le C( 1 +  |x|^{-2- \eta} +  |x|^{- (\alpha +1) \eta} +  |x|^{-2 + \frac {2} {\alpha } - 2\eta}  ) \label{fPSRM:b5} \\
      g(t, x) h(s,x) h (\sigma , x) &\le C( 1 +  |x|^{-2- 2 \eta} +  |x|^{- (\alpha +2) \eta} +  |x|^{-2 + \frac {2} {\alpha } - 3\eta}  ) \label{fPSRM:b6} 
\end{align} 
Since 
\begin{equation} \label{fEAE1} 
\alpha <\frac {4} {N-2} \quad  \text{and} \quad \eta =  \frac {N-2} {2}- \vartheta  < \frac {N-2} {2},
\end{equation} 
we see that $\eta \alpha <2$ and $ \frac {2} {\alpha } -\eta>0$, so that~\eqref{fPSRM:b1} follows from~\eqref{fPSRM:b4}. 
Using again~\eqref{fEAE1}, we obtain $2+ \eta < \frac {N+2} {2}$, $(\alpha +1) \eta < \frac {N+2} {2}$ and $ 2 -\frac {2} {\alpha } +2 \eta < \frac {N+2} {2}$, hence~\eqref{fPSRM:b2} follows from~\eqref{fPSRM:b5}. 
Moreover, \eqref{fEAE1} yields
\begin{gather*} 
2+ 2\eta = N- 2 \vartheta \le  N-\vartheta ,\\ (\alpha +2) \eta \le N- \vartheta \frac {2N} {N-2} \le N-\vartheta ,\\ 
2 - \frac {2} {\alpha } + 3\eta \le  N-3\vartheta \le N-\vartheta ,
\end{gather*} 
so that~\eqref{fPSRM:b3} follows from~\eqref{fPSRM:b6}. 

Estimate~\eqref{fPSRM} follows from Lemma~\ref{ePSS20}.
For the term $  ( \frac {  |x| } {  |x| + \sqrt t  } )^{\DFNk  }  |x|^{-2}  $ we apply Lemma~\ref{ePSS20} with $(b, c , \kappa )= (2, 1, \rho )$ if $\DFNk =\rho $ and  $(b, c , \kappa )= (2, 1, \alpha \rho )$ if $\DFNk =\alpha \rho $. This is possible by Lemma~\ref{ePSbck}. 
For the term $ h^\alpha  $ we apply Lemma~\ref{ePSS20} with $(b, c , \kappa )= (0, 1+\alpha , 0)$. This is again possible by Lemma~\ref{ePSbck}. 
Finally for the term $ \widetilde{g} $, we need only consider the case $\alpha >1$ and we apply Lemma~\ref{ePSS20} with $(b, c , \kappa )= ( \frac {2 (\alpha -1)} {\alpha }, 2 , 0)$, which is possible by Lemma~\ref{ePSbck}. 
\end{proof} 

\section{The setting for the fixed-point argument} \label{sSetting} 

In this section, we assume~\eqref{fCondAlpha}, and we use the operator $H$ defined by~\eqref{fPShar:b1} and the corresponding semigroup $(e ^{t H}) _{ t\ge 0 }$ with the kernel $\Kernel (t,x,y)$.
We introduce the framework for the fixed-point  argument that  we use for the proof of Theorem~\ref{ePSSZ1}.

We begin with the definition of several auxiliary functions.
Let $\delta >0$, and set  
\begin{equation} \label{fPSchi:b1} 
a_j= 2^{-j} \delta 
\end{equation} 
for $j\ge 0$. Define the sequence $(\chi _j) _{ j\ge 0 } \subset L^\infty  (\R^N ) $ by
\begin{equation}  \label{fPSchi} 
\chi _j (x) =
\begin{cases} 
0 &  |x| \le  a_j \\ 1 &  |x|> a_j .
\end{cases} 
\end{equation}
Given $T>0$ and an integer $m \ge 1$, we set  
\begin{equation} \label{fPStheta} 
\Theta (t, x) =  h(t,x) \Bigl(  t^{\frac {m} {2}} + \sum_{ j=1 }^m t^{\frac {j-1} {2}} \chi _j   \Bigr),
\end{equation} 
for $0\le t\le T$ and $x\in \R^N $, where $h$ is given by~\eqref{fPSeta:2}. 
Given $\CSTK >0$, we define
\begin{equation} \label{fPSDZ14} 
\Ens = \{w\in \xLn{1} _\Loc ((0,T) \times \R^N ) ; \,    |w| \le \CSTK \Theta  \} ,
\end{equation} 
and
\begin{equation} \label{fPSDZ14:b1} 
\dist (w, z) =   \Bigl\| \frac {w-z } {\Theta}  \Bigr\| _{ \xLn{\infty }  ((0,T)\times \R^N  ) }, \quad w,z\in \Ens ,
\end{equation} 
so that $(\Ens, \dist )$ is a complete metric space.

\begin{lmm} \label{ePSS2} 
 Let $\delta >0$. With the notation~\eqref{fPSchi:b1}-\eqref{fPSchi}, it follows  that there exists $\CSTtu >0$ such that for all  $j\ge 0$ and all $0\le t,s\le 1$
\begin{equation} \label{fPSD} 
e^{t H} \chi _j  \le \CSTtu   h(t,x) \Bigl( e^{-\frac { a _{ j+1 }^2} {2  \cstpa  t}}  +  \chi  _{ j+1 }  \Bigr)
\end{equation} 
where $ \cstpa $ is the constant in~\eqref{fPS12} and $h$ is given by~\eqref{fPSeta:2}.
\end{lmm}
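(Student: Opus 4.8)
The plan is to estimate $e^{tH}\chi_j$ directly from the Gaussian upper bound \eqref{fPS12} on the kernel $\Kernel$. Since $\chi_j\ge 0$, for $t>0$ and $x\in\R^N$,
$$
e^{tH}\chi_j(x)=\int_{\R^N}\Kernel(t,x,y)\,\chi_j(y)\,\xdif y\le A\,t^{-\frac N2}\,h(t,x)\int_{\{|y|>a_j\}}e^{-\frac{|x-y|^2}{\cstpa t}}\,h(t,y)\,\xdif y ,
$$
so the whole matter reduces to controlling the truncated Gaussian convolution of $h(t,\cdot)$. The auxiliary fact I would establish first is a \emph{uniform} bound: there is a constant $C_0=C_0(N,\eta,\cstpa)$, in particular independent of $t$, $x$ and $j$, with
$$
t^{-\frac N2}\int_{\R^N}e^{-\frac{|x-y|^2}{2\cstpa t}}\,h(t,y)\,\xdif y\le C_0,\qquad t>0,\ x\in\R^N .
$$
This follows by bounding $h(t,y)\le 1+(\sqrt t/|y|)^\eta$, rescaling $y=\sqrt t\,\zeta$, and using that $\zeta\mapsto(1+|\zeta|^{-1})^\eta$ is integrable against any fixed Gaussian because $\eta<N$ (from \eqref{fPSeta} and $N\ge3$), the resulting integral being uniformly bounded in the shift $x/\sqrt t$.

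Next I would split according to the size of $|x|$ relative to $a_{j+1}=a_j/2$. If $|x|>a_{j+1}$, then $\chi_{j+1}(x)=1$; dropping the truncation, using $e^{-|x-y|^2/(\cstpa t)}\le e^{-|x-y|^2/(2\cstpa t)}$ and the auxiliary bound gives $e^{tH}\chi_j(x)\le A C_0\,h(t,x)=A C_0\,h(t,x)\,\chi_{j+1}(x)$, which is dominated by the right-hand side of \eqref{fPSD}. If $|x|\le a_{j+1}$, then $\chi_{j+1}(x)=0$ and I must extract the exponential factor: for $|y|>a_j=2a_{j+1}$ one has $|x-y|\ge|y|-|x|\ge|y|/2\ge a_{j+1}$, hence $\frac{|x-y|^2}{\cstpa t}\ge\frac{|x-y|^2}{2\cstpa t}+\frac{a_{j+1}^2}{2\cstpa t}$, so that
$$
\int_{\{|y|>a_j\}}e^{-\frac{|x-y|^2}{\cstpa t}}\,h(t,y)\,\xdif y\le e^{-\frac{a_{j+1}^2}{2\cstpa t}}\int_{\R^N}e^{-\frac{|x-y|^2}{2\cstpa t}}\,h(t,y)\,\xdif y\le C_0\,t^{\frac N2}\,e^{-\frac{a_{j+1}^2}{2\cstpa t}} ,
$$
and therefore $e^{tH}\chi_j(x)\le A C_0\,h(t,x)\,e^{-a_{j+1}^2/(2\cstpa t)}$. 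Combining the two cases gives \eqref{fPSD} with $\CSTtu=\max\{AC_0,1\}$; the value $t=0$ is trivial since $a_j>a_{j+1}$ forces $\chi_j\le\chi_{j+1}$.

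The only delicate point — and the one I would flag as the main obstacle — is the uniformity in $j$: it is essential that the auxiliary Gaussian-convolution estimate does not involve $a_j$ at all, so that the truncation to $\{|y|>a_j\}$ is used only in the second case, and only to produce the geometric separation $|x-y|\ge a_{j+1}$ needed to peel off $e^{-a_{j+1}^2/(2\cstpa t)}$. The remaining ingredients — local integrability of $|y|^{-\eta}$ from $\eta<N$, the scaling of the heat kernel, and the elementary estimate $|x-y|\ge|y|/2$ on $\{|y|>a_j,\ |x|\le a_j/2\}$ — are routine.
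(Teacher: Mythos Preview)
Your proof is correct and follows essentially the same approach as the paper: apply the kernel bound~\eqref{fPS12}, split according to whether $|x|>a_{j+1}$ or $|x|\le a_{j+1}$, and in the latter case exploit the geometric separation $|x-y|\ge a_{j+1}$ to peel off the exponential factor. Your packaging of the auxiliary estimate---keeping $h(t,y)$ rather than bounding it by $h(1,y)$ before rescaling---is in fact slightly cleaner for securing a constant that is uniform in $j$.
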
 

\begin{proof} 
Applying~\eqref{fPS12},
\begin{equation} \label{fPES10} 
 \int  _{ \R^N  } \Kernel (t, x, y)  \chi _j (y)  \xdif y  \le C t ^{-\frac {N} {2}}  h(t,x)  \int  _{ \{  |y|> a_j \}  } e^{-\frac { |x-y|^2} { \cstpa t}}  h (t,y) \xdif y .
\end{equation} 
We also write
\begin{equation} \label{fPES10:b1} 
 \int  _{ \R^N  } \Kernel (t, x, y)  \chi _j (y)  \xdif y  \le C t ^{-\frac {N} {2}}  h(t,x)   \int  _{ \R^N   } e^{-\frac { |y|^2} { \cstpa t}} h (t,x-y)    \xdif y .
\end{equation} 
We deduce from~\eqref{fPES10:b1} that 
\begin{equation*} 
\begin{split} 
 \int  _{ \R^N  } \Kernel (t, x, y)  \chi _j (y)  \xdif y  &  \le C  t ^{-\frac {N} {2}} h(t,x)    \int  _{   \R^N  } e^{-\frac { |y|^2} { \cstpa t}}  \Bigr (1+\frac {\sqrt{ t } } {|x-y|}\Bigr )^{ \eta }  \xdif y \\  &  = C  h(t,x)   \int  _{ \R^N  }e^{-\frac { |y|^2} { \cstpa }}  \Bigr (1+\frac {1 } {| ( x / \sqrt{ t }) -y|}\Bigr )^{\eta }   \xdif y. 
\end{split} 
\end{equation*} 
Since  $\eta <N$ by~\eqref{fPSeta}, we see that
\begin{equation*} 
 \int  _{ \R^N  } e^{-\frac { |y|^2} { \cstpa }}  \Bigr (1+\frac {1 } {| z -y|}\Bigr )^{\eta } \le 2^\eta \int   _{ \R^N  }  e^{-\frac { |y|^2} { \cstpa }} + 2^\eta \int   _{  |z-y|<1 }  | z -y|^{ - \eta } \le C
\end{equation*} 
independent of $z\in \R^N $, and it follows that
\begin{equation*} 
 \int  _{ \R^N  } \Kernel (t, x, y)  \chi _j (y)   \xdif y   \le  C h(t,x) 
\end{equation*} 
for all $x\in \R^N $. 
In particular, if $ |x| > a _{ j+ 1 }$, then
\begin{equation} \label{fPES10:b3} 
 \int  _{ \R^N  } \Kernel (t, x, y)  \chi _j (y)   \xdif y  \le  C h(t,x) \chi _{ j+1 } (x).
\end{equation} 
Next, if $ |x| \le a _{ j+ 1 }$ and $ |y|\ge a_j$, then 
\begin{equation*} 
 |x-y| \ge  |y| - a _{ j+1 } =  |y| - \frac {1} {2} a_j \ge \frac {1} {2}  |y| \ge a _{ j+1 }
\end{equation*} 
and so 
\begin{equation} \label{fPES10b}
e^{-\frac { |x-y|^2} { \cstpa t}} = e^{-\frac { |x-y|^2} {2 \cstpa t}} e^{-\frac { |x-y|^2} {2 \cstpa t}}\le  e^{-\frac { |y|^2} {8 \cstpa t}} e^{-\frac { a _{ j+1 }^2} { 2 \cstpa t}}. 
\end{equation} 
If $ |x| \le a _{ j+ 1 }$, then we deduce from~\eqref{fPES10}, \eqref{fPES10b}, and $h(t, \cdot ) \le h(1, \cdot )$ that 
\begin{equation} \label{fPES10:b2} 
 \int  _{ \R^N  } \Kernel (t, x, y)  \chi _j (y)   \xdif y   \le C h(t,x)  e^{-\frac { a _{ j+1 }^2} { 2 \cstpa t}}  t ^{-\frac {N} {2}} \int  _{ \R^N  } e^{-\frac { |y|^2} { 8 \cstpa t}}  h ( 1 ,y)  \xdif y  \le C   h(t,x)    e^{-\frac { a _{ j+1 }^2} { 2 \cstpa t}} .
\end{equation} 
Then~\eqref{fPSD} follows from~\eqref{fPES10:b3} and \eqref{fPES10:b2}.
\end{proof} 

\begin{lmm} \label{ePSS4} 
Let $S>0$ and let $U \in  \xLn{1}  ((0, S ) \times \R^N )+ \xLn{\infty } ((0, S ) \times \R^N )$ satisfy~\eqref{fPShrho}   
for a.a. $0<t<S$ and $x\not = 0$, where $\rho $ is given by~\eqref{fPSrho}. Let $T\le S$,
\begin{equation} \label{fTUQ} 
0 < T < \frac {1} {4} ,
\end{equation} 
$\delta >0$, $\CSTK >0$, and let $\Ens$ be defined by~\eqref{fPSDZ14}. 
It follows that $\Md w \in \xLn{1} _\Loc ( (0,T)\times \R^N )$ for all $w\in \Ens $, where $\Md w$ is defined by~\eqref{fPT3}.
Moreover, 
\begin{equation} \label{fpPSS4:1} 
  | \Md w (t, \cdot )- \Md z (t, \cdot )| \le   \CSTtd |w(t, \cdot )-z(t, \cdot )| g(t, \cdot ),
\end{equation} 
for all $w, z \in \Ens$, where  $g(t,x) $ is given by~\eqref{fPSGS} and $ \CSTtd $ is independent of $T$, $m$, $\CSTK $, $w$ and $z$.
\end{lmm}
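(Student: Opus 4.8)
The plan is to establish the pointwise inequality~\eqref{fpPSS4:1} first, and to deduce the membership $\Md w\in\xLn{1}_\Loc((0,T)\times\R^N)$ from it. For the membership, note that $\Md 0=|U|^\alpha U-|U|^\alpha U-0=0$, so $0\in\Ens$; applying~\eqref{fpPSS4:1} with $z=0$ gives $|\Md w|=|\Md w-\Md 0|\le\CSTtd|w|\,g\le\CSTtd\CSTK\Theta\,g$. Since $0<T<\frac14$ forces $t^{m/2}+\sum_{j=1}^m t^{(j-1)/2}<3$ for $0<t<T$, we have $\Theta\le 3h$ on $(0,T)\times\R^N$, and then~\eqref{fPSRM:b2} (with $s=t$) yields the $t$-independent bound $|\Md w(t,x)|\le 3\CSTtd\CSTK\CSTu(1+|x|^{-\frac{N+2}{2}})$, which lies in $\xLn{1}_\Loc(\R^N)$ because $\frac{N+2}{2}<N$.

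To prove~\eqref{fpPSS4:1}, set $F(s)=|s|^\alpha s$, so that $F\in\xCn{1}(\R)$ with $F'(s)=(\alpha+1)|s|^\alpha$, and observe that $\beta(\alpha+1)|x|^{-2}=F'(\beta^{1/\alpha}|x|^{-2/\alpha})$. Writing $F(a)-F(b)=(a-b)\int_0^1F'(b+\theta(a-b))\,\xdif\theta$ with $a=U+w$ and $b=U+z$, we obtain from~\eqref{fPT3}
\begin{equation*}
\Md w-\Md z=(\alpha+1)(w-z)\int_0^1\bigl[\,|U+\xi_\theta|^\alpha-\beta|x|^{-2}\,\bigr]\xdif\theta,\qquad \xi_\theta=z+\theta(w-z),
\end{equation*}
where $|\xi_\theta|\le\max\{|w|,|z|\}\le\CSTK\Theta\le 3\CSTK h$ (again using $T<\frac14$). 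Hence it suffices to prove that $\bigl|\,|U+\xi|^\alpha-\beta|x|^{-2}\,\bigr|\le C\,g(t,x)$ whenever $|\xi|\le 3\CSTK h(t,x)$, with $C$ independent of $T$, $m$, $\CSTK$, $w$ and $z$.

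For this I would use the splitting
\begin{equation*}
\bigl|\,|U+\xi|^\alpha-\beta|x|^{-2}\,\bigr|\le\bigl|\,|U+\xi|^\alpha-|U|^\alpha\,\bigr|+\bigl|\,|U|^\alpha-\beta|x|^{-2}\,\bigr|,
\end{equation*}
treating each summand by the elementary inequalities $\bigl|\,|a+b|^\alpha-|a|^\alpha\,\bigr|\le|b|^\alpha$ for $0<\alpha\le1$ and $\bigl|\,|a+b|^\alpha-|a|^\alpha\,\bigr|\le C(|b|^\alpha+|b|\,|a|^{\alpha-1})$ for $\alpha>1$, distinguishing these two ranges throughout. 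For the first summand, take $a=U$, $b=\xi$, use $|\xi|\le 3\CSTK h$ together with the bound $|U(t,x)|\le C(|x|^{-2/\alpha}+1)$, which follows at once from~\eqref{fPShrho}; this produces exactly the terms $(\CSTK h)^\alpha$ and $\CSTK\widetilde g$ of $g$, the function $\widetilde g$ being tailored to absorb the cross term $|\xi|\,|U|^{\alpha-1}$ that arises when $\alpha>1$. For the second summand, set $V=|x|^{2/\alpha}U$, so that~\eqref{fPShrho} reads $|V-\beta^{1/\alpha}|\le C[(\frac{|x|}{|x|+\sqrt t})^\rho+|x|^{2/\alpha}]$, and apply $\bigl|\,|V|^\alpha-\beta\,\bigr|\le|V-\beta^{1/\alpha}|^\alpha$ for $\alpha\le1$, respectively $\bigl|\,|V|^\alpha-\beta\,\bigr|\le C(|V|^{\alpha-1}+1)|V-\beta^{1/\alpha}|$ for $\alpha>1$; multiplying by $|x|^{-2}$, expanding, and using the relations $\DFNk=\rho\min\{1,\alpha\}$, $\frac{2(\alpha-1)}{\alpha}+\frac{2}{\alpha}=2$, and the inequality $\rho<\frac{2}{\alpha}$ (immediate from~\eqref{fPSrho}), every resulting term is dominated by a constant multiple of one of the three summands of $g$, the verification being a case distinction according to the size of $|x|$.

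The main obstacle is exactly this last bookkeeping: one must check that each monomial produced by the two expansions is controlled by $(\frac{|x|}{|x|+\sqrt t})^{\DFNk}|x|^{-2}$, $(\CSTK h)^\alpha$, or $\CSTK\widetilde g$ on the relevant region of $x$, which hinges on the precise value $\DFNk=\rho\min\{1,\alpha\}$ — balancing the power $\rho$ coming from~\eqref{fPShrho} after the exponent $\min\{1,\alpha\}$ is applied — on $\rho<\frac{2}{\alpha}$, which makes the polynomially singular remainders subordinate near the origin, and on the shape of $\widetilde g$ matching $|\xi|\,|U|^{\alpha-1}$ for $\alpha>1$. Once the pointwise comparison with $g$ is in hand, the mean value reduction above and the deduction of the $\xLn{1}_\Loc$ membership are routine.
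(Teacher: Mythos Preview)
Your approach is correct and essentially the same as the paper's. Both arguments reduce $\Md w-\Md z$ via the mean value theorem to $(w-z)$ times an integral of $(\alpha+1)\bigl[\,|U+\xi_\theta|^\alpha-\beta|x|^{-2}\,\bigr]$, and then estimate this factor using the elementary bounds for $\bigl|\,|a|^\alpha-|b|^\alpha\,\bigr|$ in the two regimes $\alpha\le 1$ and $\alpha>1$. The only organizational difference is that the paper writes $U=Z+V$ with $Z=\beta^{1/\alpha}|x|^{-2/\alpha}$ and directly bounds $\bigl|\,|Z+V+\xi|^\alpha-|Z|^\alpha\,\bigr|$ in one step, obtaining the terms $|V|^\alpha+|w|^\alpha+|z|^\alpha$ (and, for $\alpha>1$, also $Z^{\alpha-1}(|V|+|w|+|z|)$), whereas you insert the intermediate $|U|^\alpha$ and treat $\bigl|\,|U+\xi|^\alpha-|U|^\alpha\,\bigr|$ and $\bigl|\,|U|^\alpha-\beta|x|^{-2}\,\bigr|$ separately; the resulting monomials and their estimation against the three summands of $g$ are the same, and your deduction of $\Md w\in\xLn{1}_\Loc$ from~\eqref{fpPSS4:1} with $z=0$ via~\eqref{fPSRM:b2} matches the paper's use of the same bound in Lemma~\ref{eLemSol1}.
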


\begin{proof}
Let $w, z \in \Ens$. Set 
\begin{equation*} 
f( s )=  |s|^\alpha s
\end{equation*} 
and define 
\begin{equation}  \label{fPS9b}
Z( x )= \beta ^{\frac {1} {\alpha }}   |x| ^{-\frac {2} {\alpha }}, \quad V= U- Z .
\end{equation} 
It follows that
\begin{equation*}
\Md w=  f (Z + V +w) -   f (Z + V)   - f '(  Z )  w ,
\end{equation*}
so that
\begin{equation} \label{fPS9}
\begin{split}
\Md w  -\Md z&= f (Z + V +w) -   f (Z + V+z)   - f '(  Z ) (w-z)\\
&= (w-z)  \int _0^1 [ f '( Z + V+ s w+(1-s)z) - f '(  Z )  ]   \xdif s .
\end{split}
\end{equation} 
Since $f '(s) = (\alpha +1)  |s|^\alpha $ and 
\begin{equation*}
 |\,  | x |^\alpha - | y |^\alpha  |\le
 \begin{cases}
 \alpha ( |x |^{\alpha -1} +  | y |^{\alpha -1})  | x-y |
 & \text{if }\alpha \ge 1, \\
  | x - y|^\alpha &  \text{if }0<\alpha \le 1,
 \end{cases}
\end{equation*}
for all $x,y\in \R$, we deduce from~\eqref{fPS9} that
\begin{equation} \label{fPS9:b2}
 |\Md w -  \Md z| \le C | w - z|   
 \times  \begin{cases} 
   |V|^{ \alpha } + |w|^{ \alpha } + |z|^{ \alpha } + Z^{ \alpha -1}  ( |V| + | w| + |z| ) &  \text{if }  \alpha \ge 1, \\
  | V|^\alpha +  | w|^\alpha + |z|^{ \alpha }&  \text{if }  0<\alpha \le 1 .
 \end{cases} 
\end{equation} 
By~\eqref{fPShrho} 
\begin{equation}  \label{fPS10}
 | V(t,  x )   | \le C  \Bigl [\Bigl( \frac {  |x| } {  |x| +  \sqrt t } \Bigr)^\rho  |x| ^{-\frac {2}  {\alpha }} +1\Bigr ] .
\end{equation} 
On the other hand, ~\eqref{fPStheta} and~\eqref{fTUQ} yield 
\begin{equation} \label{fET1} 
|w| +  |z| \le  2 \CSTK  \Theta (t,x)  \le 2\CSTK h(t,x)\sum_{ j=0 }^{m} t^{\frac {j} {2}} < 4\CSTK h(t,x).
\end{equation} 
if $t< T$. 
From \eqref{fPS10}  and \eqref{fET1} we get  (recall that $h\ge 1$)
\begin{equation} \label{fET2}
   |V|^{ \alpha } + |w|^{ \alpha } + |z|^{ \alpha } \le  C\Bigl (\Bigl( \frac {  |x| } {  |x| + \sqrt t  } \Bigr)^{\alpha \rho }  |x| ^{-2}+(\CSTK  h(t,x))^{\alpha }\Bigr )
\end{equation}    
and
\begin{equation} \label{fET3}
 Z^{ \alpha -1}  ( |V| + | w| +  |z|)\le C\Bigl (\Bigl( \frac { |x|} {  |x| + \sqrt t  } \Bigr)^{\rho }  |x| ^{-2}+r^{-\frac {2(\alpha -1)} {\alpha }}\CSTK h(t,x)\Bigr ).
 \end{equation} 
Estimate~\eqref{fpPSS4:1} follows from \eqref{fPS9:b2}, \eqref{fET2}, \eqref{fET3} and~\eqref{fPSGS}. 
\end{proof} 

\begin{lmm} \label{eLemSol1} 
Let $\delta >0$,  $T>0$, and $m>  \max \{ \rho ,\alpha \rho \}  -2$, $m\ge 1$.
Let $U \in  \xLn{1}  ((0, T ) \times \R^N )+ \xLn{\infty }  ((0, T ) \times \R^N )$ satisfy~\eqref{fPShrho}   
for a.a. $0<t<T$ and $x\not = 0$.
Let  $\chi _0$ defined by~\eqref{fPSchi} and $\Theta $  by~\eqref{fPStheta}. 
Suppose $ \DIbd \in L^\infty  (\R^N ) $ satisfies $ | \DIbd | \le C \chi _0$ and $w\in \xLn{1} _\Loc ((0, T) \times \R^N )$ satisfies $ |w| \le C \Theta  $ for some constant $C$. It follows that
\begin{equation} \label{eLemSol1:2} 
W(t,x)=: e^{t H } \DIbd + \int _0^t   e^{ (t-s) H} \Md w (s)  \xdif s,
\end{equation} 
where $\Md $ is given by~\eqref{fPT3}, is well defined and $  |W |\le  \widetilde{C} h$ with $ \widetilde{C} >0$ and $h$ given by~\eqref{fPSeta:2}. In particular, $W\in \xLn{{ \frac {2N} {N-2} }} ((0,T) \times \R^N ) + \xLn{\infty }  ((0, T) \times \R^N ) $. 

Moreover, $|x|^{-2} W \in \xLn{1} _\Loc ((0, T) \times \R^N )$ and
\begin{equation} \label{feLemSol1:1} 
\partial _t W - \Delta W - \beta (\alpha +1)  |x|^{-2} W = \Md w 
\end{equation} 
in ${\mathcal D}' ((0, T) \times   \R^N )$. In addition, $W, \Md w \in \xLn{\infty }   ( (0, T) \times \{  |x| >\varepsilon \} ) $ for every $\varepsilon >0$. 
\end{lmm}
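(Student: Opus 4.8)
The plan is to estimate the two terms in~\eqref{eLemSol1:2} separately, then verify the distributional equation~\eqref{feLemSol1:1} by a duality/approximation argument. For the linear term, I would invoke Lemma~\ref{ePSS2}: since $|\DIbd|\le C\chi_0$, iterating~\eqref{fPSD} (or using it once with $j=0$) gives $e^{tH}|\DIbd|\le C h(t,x)(e^{-a_1^2/(2at)}+\chi_1)\le C'h(t,x)$, because the Gaussian factor is bounded and $\chi_1\le 1$. For the nonlinear term, the hypothesis $|w|\le C\Theta$ together with the structure of $\Theta$ in~\eqref{fPStheta} lets me bound $|\Md w|$ using Lemma~\ref{ePSS4}: taking $z=0$ in~\eqref{fpPSS4:1} gives $|\Md w(t,\cdot)|\le \CSTtd\, C\,\Theta(t,\cdot)\,g(t,\cdot) = \CSTtd C\, h(t,\cdot)\bigl(t^{m/2}+\sum_{j=1}^m t^{(j-1)/2}\chi_j\bigr)g(t,\cdot)$. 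I then apply $\int_0^t e^{(t-s)H}(\cdot)\,\xdif s$ term by term. The term $t^{m/2}g(t,\cdot)h(t,\cdot)$ is handled directly by Lemma~\ref{ePSRM} with $r=\infty$ (so $r^\varepsilon=1$), giving a bound $C t^{m/2}h(t,x)\le C h(t,x)$ since $t<T$. For each term $t^{(j-1)/2}\chi_j g(t,\cdot)h(t,\cdot)$, I first use $\chi_j\le 1_{\{|y|<\infty\}}$—or more precisely write $g h$ against $\chi_j$ and use that on $\{|y|>a_j\}$ one has $|y|^{-1}\le a_j^{-1}$—to reduce to $\int_0^t e^{(t-s)H}\KIR\, g(s)h(s)\,s^{(j-1)/2}\xdif s$ with a finite $r$; Lemma~\ref{ePSRM} then yields a bound $\CSTR{j-1}\, a_j^{\varepsilon} t^{(j-1)/2}|x|^{-\varepsilon}h(t,x)$, and since $t<T<1/4$ and $|x|^{-\varepsilon}h(t,x)$ is dominated by... wait, $|x|^{-\varepsilon}$ is unbounded near $0$; instead I note $a_j^\varepsilon|x|^{-\varepsilon}\le (a_j/|x|)^\varepsilon$ is not bounded either, so I must route through $\chi_j$ differently: use that $g h\cdot \chi_j$ is supported in $\{|y|>a_j\}$, apply Lemma~\ref{ePSS20}-type estimates with the indicator $1_{\{|y|>a_j\}}$ rather than $\KIR$. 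In fact the cleaner route is the complementary one already baked into the statement of Lemma~\ref{ePSRM}: the lemma produces $r^\varepsilon t^{m/2}|x|^{-\varepsilon}h(t,x)$ for the \emph{interior} piece $\KIR$, and symmetrically one gets an $h(t,x)$ bound from the exterior piece where $\chi_j$ lives; summing the finitely many $j$ and using $\sum_{j\ge 1}\CSTR{j-1}a_j^\varepsilon<\infty$ (geometric) gives $|W|\le\widetilde C h$. This is the inclusion $W\in \xLn{2N/(N-2)}+\xLn{\infty}$ because $h-1\in \xLn{2N/(N-2)}(\R^N)$ locally in $x$ and bounded for $|x|$ large, uniformly in $t\in(0,T)$, while the constant $1$ gives the $\xLn\infty$ part.

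Next, for the pointwise bound on $\Md w$ away from the origin: on $\{|x|>\varepsilon\}$ the weight $h(t,x)$ is bounded (by $h(1,\varepsilon)$ for $t\le T<1/4$), the potential singularity $|x|^{-2}\le \varepsilon^{-2}$ is bounded, and $g(t,x)$ is bounded by~\eqref{fPSRM:b1} on that region; hence $|\Md w|\le C\Theta g\le C'$ on $(0,T)\times\{|x|>\varepsilon\}$. Likewise $|W|\le\widetilde C h\le\widetilde C h(1,\varepsilon)$ there. Then $|x|^{-2}W\in\xLn1_\Loc$ follows from $|x|^{-2}h(t,x)=|x|^{-2}(1+\sqrt t/|x|)^\eta\le C(|x|^{-2}+T^{\eta/2}|x|^{-2-\eta})$ with $2+\eta<N$ (by~\eqref{fDDDe1}, $\eta<(N-2)/2<N-2$), so the singularity is locally integrable in $x$, and integrable in $t$ on $(0,T)$.

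The main work—and the step I expect to be the real obstacle—is verifying~\eqref{feLemSol1:1} in $\mathcal D'((0,T)\times\R^N)$. Here the subtlety is that $W$ is defined through the kernel $\Kernel$ of the \emph{singular} operator $H=\Delta+\beta(\alpha+1)|x|^{-2}$, not through the free heat semigroup, so "Duhamel's formula solves the PDE" is a statement about the operator $H$ that needs justification at the level of distributions, given only the kernel bound~\eqref{fPS12}. The plan is: (i) first establish the smoothing estimate that $W\in \xLn\infty((0,T)\times\{|x|>\varepsilon\})$, shown above, which already gives enough local regularity away from $x=0$; (ii) test against $\varphi\in\mathcal D((0,T)\times\R^N)$ and use Fubini (justified by the $\xLn1_\Loc$ bounds just proved) to move the operator $\partial_t-\Delta-\beta(\alpha+1)|x|^{-2}$ onto the kernel, invoking that $\Kernel$ is, for fixed $s<t$, a fundamental-solution-type object for $H$—this is where I would cite the general facts assembled in Appendix~\ref{sISP} (the semigroup property of $(e^{tH})$, the fact that $\partial_t(e^{tH}f)=H(e^{tH}f)$ in $\mathcal D'$ for $f\in\xLn2$, and a density/approximation of $\DIbd$ and $\Md w(s)$ by $\xLn2$ functions using the decay $\chi_0$, $\Theta g$ respectively); (iii) for the linear term $e^{tH}\DIbd$, since $\DIbd\in\xLn\infty$ with $|\DIbd|\le C\chi_0\in\xLn2_\Loc$ is \emph{not} globally $\xLn2$, I would truncate $\DIbd$ to $\DIbd 1_{\{|x|<R\}}\in\xLn2$, apply the clean semigroup theory, and pass $R\to\infty$ using the kernel bound~\eqref{fPS12} to control the tail; similarly handle the Duhamel integrand $\Md w(s)$, which by~\eqref{fpPSS4:1} and the $\Theta$ bound lies in $\xLn2_\Loc$ uniformly, truncating in $|x|$ and in $s\in(\sigma,t)$ near the endpoints and passing to the limit via dominated convergence, the dominating functions coming precisely from Lemmas~\ref{ePSS20} and~\ref{ePSRM}. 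The delicate point throughout is the interplay near $x=0$: one must check that no boundary contribution at $x=0$ appears when integrating by parts against the inverse-square potential, which is where the restriction $\beta(\alpha+1)<(N-2)^2/4$ (equivalently the form-boundedness underlying the self-adjointness in~\eqref{fPShar:b1}) and the corresponding mild singularity $|x|^{-\eta}$ of the kernel are used—$\eta<(N-2)/2$ guarantees the relevant integrals converge at the origin. Once~\eqref{feLemSol1:1} holds in $\mathcal D'$, the remaining assertions ($|x|^{-2}W\in\xLn1_\Loc$, and $W,\Md w\in\xLn\infty((0,T)\times\{|x|>\varepsilon\})$) have already been recorded, completing the proof.
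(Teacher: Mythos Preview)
Your route to $|W|\le\widetilde C h$ is salvageable but needlessly tangled. The clean observation is that $\chi_1\le\cdots\le\chi_m$ (since $a_j=2^{-j}\delta$ is decreasing) and $h\chi_j\le C\chi_j$ on its support, so $\Theta\le h\,s^{m/2}+C'\chi_m$. Hence $|\Md w|\le C\,g\Theta\le C\,gh\,s^{m/2}+C'\chi_m$, where the last step uses that $g$ is bounded on $\operatorname{supp}\chi_m$. The two pieces are then dispatched by~\eqref{fPSRM} with $\varepsilon=0$, $r=\infty$, and by~\eqref{fPSD}; no interior/exterior splitting of each $\chi_j$ is needed, and you avoid the $|x|^{-\varepsilon}$ factor that was troubling you.

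The genuine gap is in the verification of~\eqref{feLemSol1:1}. Your approximation scheme rests on the claim that $\Md w\in L^2_{\mathrm{loc}}$ uniformly in $s$, so that truncating to $\{|x|<R\}$ places you in the $L^2$ Duhamel framework. This claim is false: the available bound is $|\Md w|\le C\,g\Theta\le C\,gh$, and by~\eqref{fPSRM:b2} this only gives $|\Md w|\le C(1+|x|^{-(N+2)/2})$, which is \emph{not} in $L^2$ near the origin. A one-sided spatial truncation therefore does not deliver an $L^2$ source term, and the clean semigroup theory (Lemma~\ref{eLemLHISP5}) cannot be invoked as you propose.

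The paper resolves this with a three-way split $1=\psi_n+\rho_n+\theta_n$, where $\psi_n$ lives in $\{|x|<2/n\}$, $\rho_n$ in an annulus $\{1/n\le|x|\le 2n\}$, and $\theta_n$ in $\{|x|>n\}$. The annular piece $\rho_n\Md w$ \emph{is} in $L^\infty((0,T),L^2)$ (bounded and compactly supported in $x$), so Lemma~\ref{eLemLHISP5} applies and dominated convergence handles the right-hand side. The far piece is killed by Lemma~\ref{eLemLHISP2}. The decisive step, which your outline does not contain, is the near-origin piece: for $n$ large one has $\psi_n|\Md w|\le C\,s^{m/2}\,1_{\{|y|<2/n\}}\,g(s,y)h(s,y)$ (the $\chi_m$ contribution in $\Theta$ vanishes there), and now Lemma~\ref{ePSRM} is invoked with a \emph{strictly positive} $\varepsilon$, chosen so that $2+\eta+\varepsilon<N$. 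The resulting bound $|V_n|\le Cn^{-\varepsilon}|x|^{-\varepsilon-\eta}$ is integrable against $|x|^{-2}\zeta$ on the support of any test function $\zeta$, and the factor $n^{-\varepsilon}$ forces $V_n\to0$. This is the purpose of the parameter $\varepsilon>0$ in Lemma~\ref{ePSRM}; your plan uses only $\varepsilon=0$ and therefore has no mechanism to show the near-origin contribution vanishes.
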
 

\begin{proof} 
The contribution of $\DIbd $ is estimated by~\eqref{fPSD}.  
Next, using~\eqref{fpPSS4:1} with $z=0$, and the inequality $\Theta (s,y) \le s^{\frac {m} {2}} + C ' \chi _m (y) $, we have
\begin{equation} \label{fSPL3} 
\begin{split} 
 |\Md w ( s, y) | & \le   \CSTtd  g(s,y)  |w (s, y) | \le C  \CSTtd  g(s, y)  \Theta (s, y) \\ & \le C  \CSTtd  s^{\frac {m} {2}} g(s, y)  + C  \CSTtd  C' \chi _m (y) g(s,y) \\
 &  \le C''  s^{\frac {m} {2}} g(s, y)  + C'' \chi _m (y)  ,
\end{split} 
\end{equation} 
for some constants $C', C''$. 
(In the last inequality, we used the fact that $g$ defined by~\eqref{fPSGS} is bounded  on the support of $\chi _m$.)
Since $ h\ge 1$, the contribution of $s^{\frac {m} {2}} g(s, y)$ is estimated by~\eqref{fPSRM} with $\varepsilon =0$ and $r=\infty $; and the contribution of $\chi _m$ is estimated by~\eqref{fPSD} and the inequality $h(t-s, x) \le h(t,x)$.
It follows that $ |W| \le C h$, and the $\xLn{p} $ regularity of $W$ follows from $h (t,x) \le C + C  |x|^{- \eta }$ and $\eta < \frac {N-2} {2}$. 
The same estimate also shows that $|x|^{-2} W \in \xLn{1} _\Loc ((0, T) \times \R^N )$.
That $W, \Md w \in \xLn{\infty }  ( (0, T) \times \{  |x| >\varepsilon \} ) $ for every $\varepsilon >0$ follows from $ |W|\le Ch$ and from~\eqref{fpPSS4:1} with $z=0$. 

We now prove~\eqref{feLemSol1:1}. 
The term corresponding to $\DIbd $ in~\eqref{eLemSol1:2} satisfies the homogeneous equation in ${\mathcal D}' ((0, T) \times  \R^N )$ by Lemma~\ref{eLemLHISP4}, so we now assume $\DIbd =0$. 
We fix a function
\begin{equation*} 
\zeta \in \xCn{\infty } _\Comp ((0, T) \times \R^N  ).
\end{equation*} 
Next, let $\xi \in \xCn{\infty } _\Comp (\R^N ) $ satisfy $0\le \xi \le 1$,  $\xi (x) = 1$ for $ |x|\le 1$ and $\xi (x) = 0 $ for $ |x|\ge 2$, and set 
\begin{equation*} 
\psi  _n ( x) = \xi (nx), \quad \theta  _n (x) = 1 - \xi  \Bigl( \frac {x} {n} \Bigr), \quad \rho _n (x)= 1- \psi _n (x) - \theta _n (x),
\end{equation*} 
for $x\in \R^N $ and $ n\ge 1$. 
It follows that 
\begin{equation} \label{feLemSol1:2} 
 \| \psi _n\| _{ L^r (\R^N ) } \goto  _{ n\to \infty  } 0
\end{equation} 
for all $1\le r<\infty $, that $\rho _n  $ is supported in $\{ \frac {1} {n} \le  |x| \le 2n \}$, and that $0\le \theta _n \le 1$ and $\theta _n (x) =0$ for $ |x|\le n$. 
We write
\begin{equation*} 
W= V_n + W_n + Z_n,
\end{equation*} 
where
\begin{gather*} 
V_n = \int _0^t e^{ (t-s) H} (\psi _n \Md w) (s,\cdot )  \xdif s, \\
W_n = \int _0^t e^{ (t-s) H} (\rho _n \Md w) (s,\cdot )  \xdif s, \\
Z_n = \int _0^t e^{ (t-s) H} (\theta _n \Md w) (s,\cdot )  \xdif s. 
\end{gather*} 
Since $ |\Md w | \le C g \Theta $ and $\Theta \le Ch $, it follows from~\eqref{fPSRM:b2} that
\begin{equation} \label{fEstMdw} 
 | \Md w |\le C ( 1 +  |x|^{-\frac {N+2} {2}} ) .
\end{equation} 
In particular, we see that $\rho _n \Md w \in \xLn{\infty } ((0, T), L^2 (\R^N ) )$.  
Applying Lemma~\ref{eLemLHISP5}, we deduce that
\begin{equation*} 
\int _0^T\int  _{ \R^N  } W_n (- \partial _t \zeta - \Delta \zeta - \beta (\alpha +1)  |x|^{-2}  \zeta )=  \int _0^T\int  _{ \R^N  } \zeta \rho _n \Md w .
\end{equation*} 
Since $N\ge 3$, the right-hand side of~\eqref{fEstMdw} is in $\xLn{1} _\Loc (\R^N )$, so that $\zeta \Md w \in \xLn{1} ((0,T) \times \R^N )$. Since $0\le \rho _n \le 1$ and $\rho _n \to 1$ a.e., we deduce by dominated convergence that
\begin{equation} \label{feLemSol1:3} 
\int _0^T\int  _{ \R^N  } W_n (- \partial _t \zeta - \Delta \zeta - \beta (\alpha +1)  |x|^{-2}  \zeta ) \goto _{ n\to \infty  }  \int _0^T\int  _{ \R^N  } \zeta  \Md w .
\end{equation} 
Next, we let $0<r < 2^{-m} \delta $, so that $\KIR \chi _m \equiv 0$ by~\eqref{fPSchi}; and so it follows from~\eqref{fSPL3} that 
\begin{equation*} 
\KIR  |\Md w ( s, y) | \le C''  s^{\frac {m} {2}} \KIR g(s, y) .
\end{equation*} 
Since $\psi _n \le 1 _{ \{  |y|< \frac {2} {n}\} }$ and $h\ge 1$, we deduce that for $n\ge \frac {2} {r}$
\begin{equation} \label{fSPL4} 
\psi _n  |\Md w ( s, y) | \le C''  s^{\frac {m} {2}} 1 _{ \{  |y|< \frac {2} {n}\} } g(s, y) \le C''  s^{\frac {m} {2}} 1 _{ \{  |y|< \frac {2} {n}\} } g(s, y) h(s, y) . 
\end{equation} 
We fix $0<\varepsilon \le \varepsilon _0$, where $\varepsilon _0$ is given by Lemma~\ref{ePSRM}, sufficiently small so that
\begin{equation} \label{fSPL5} 
2 + \eta + \varepsilon <N.
\end{equation} 
(This is possible, since $\eta < \frac {N-2} {2}$.)
It follows from~\eqref{fSPL4}, \eqref{fPSRM} and~\eqref{fPSeta:2} that
\begin{equation*} 
V_n (t,x) \le C n^{- \varepsilon }   |x|^{-\varepsilon } h(t,x) \le C n^{- \varepsilon }   |x|^{- \eta -\varepsilon },
\end{equation*} 
on the support of $\zeta $, and we conclude using~\eqref{fSPL5}  that
\begin{equation} \label{feLemSol1:5} 
\int _0^T\int  _{ \R^N  } V_n (- \partial _t \zeta - \Delta \zeta - \beta (\alpha +1)  |x|^{-2}  \zeta ) \goto  _{ n\to \infty  } 0.
\end{equation} 
Moreover, $0\le \theta _n \le  1 _{ \{  |x| > n\} } $, so that by Lemma~\ref{eLemLHISP2}
\begin{equation*} 
\sup  _{ 0\le t\le T }  \Bigl\|  \frac {1} {h(T)} Z_n (t) \Bigr\| _{ L^\infty (  |x| \le \frac {n} {2}) } \le C \int _0^T  \Bigl(  1 + \frac {\sqrt s} {n} \Bigr)^\eta e^{-  \varsigma \frac {n^2} {s} }  \xdif s \goto _{ n\to \infty  } 0. 
\end{equation*} 
Since $ \frac {1} {h(T)} \ge \varepsilon  >0$ on the support of $\zeta $, and since  the support of $\zeta $ is included in $\{ |x| \le \frac {n} {2})\}$ for $n$ large, we deduce that
\begin{equation} \label{feLemSol1:6} 
\int _0^T\int  _{ \R^N  } Z_n (- \partial _t \zeta - \Delta \zeta - \beta (\alpha +1)  |x|^{-2}  \zeta ) \goto _{ n\to \infty  }0. 
\end{equation} 
Applying~\eqref{feLemSol1:3}, \eqref{feLemSol1:5} and~\eqref{feLemSol1:6}, we see that
\begin{equation*} 
\int _0^T\int  _{ \R^N  } W (- \partial _t \zeta - \Delta \zeta - \beta (\alpha +1)  |x|^{-2}  \zeta )=  \int _0^T\int  _{ \R^N  } \zeta  \Md w ,
\end{equation*} 
Since $\zeta \in \xCn{\infty }  _\Comp ((0, T) \times \R^N  ) $ is arbitrary, this proves that $W$ is a solution of~\eqref{feLemSol1:1} in ${\mathcal D}' ((0, T) \times  \R^N  )$. 
\end{proof} 

\section{Proof of Theorem~$\ref{ePSSZ1}$}
\label{sFixedPoint} 

We  prove Theorem~\ref{ePSSZ1} by using a fixed point argument.
We set 
\begin{equation*} 
\DIbd = \DI - U(0),
\end{equation*} 
so that
\begin{equation} \label{ePSSZ1:5} 
\DIbd \in \xLn{\infty } (\R^N ) \quad  \text{and}\quad  \DIbd =0  \text{ on } \{  |x|<\delta  \} .
\end{equation} 
We let
\begin{equation} \label{fDK1} 
\CSTK = 2  \CSTtu    \| \DIbd \| _{ \xLn{\infty }  },
\end{equation} 
where $ \CSTtu  $ is given by~\eqref{fPSD}. 
We fix an integer $m\ge 2$, $m>  \max \{ \rho ,\alpha \rho \}  -2$ (where $\rho>0$ is defined by~\eqref{fPSrho}), sufficiently large so that
\begin{equation} \label{fDRM} 
 \CSTR{m} <\frac {1} {4   \CSTtd  },
\end{equation} 
where $  \CSTtd  $ is given by~\eqref{fpPSS4:1} and $ \CSTR{m} $   is given by~\eqref{fPSRM} with $\varepsilon =0$. 
Next, we fix $T\le S$, 
\begin{equation} \label{fPST} 
0< T<\frac {1} {4}
\end{equation} 
sufficiently small so that
\begin{gather} 
e^{-\frac {a_{m+ 1} ^{2}}{2 \cstpa t}} \le  t^{\frac {m + 1 } {2} }\quad  \text{for }0<t\le T , \label{fDFM:1}  \\
T^{\frac {1} {2}} \CSTtu   \CSTtd  \CSTu  \bigl(1+  a_m^{ - \frac {N+2} {2} } \bigr) \le \frac {1} {8 } \label{fDFM:4}  
\end{gather} 
where   $\CSTu $ is given by~Lemma~\ref{ePSRM} and the numbers $a_j$ are given by~\eqref{fPSchi:b1}.
Let $\Theta $ be given by~\eqref{fPStheta} and let $(\Ens, \dist )$ be defined by~\eqref{fPSDZ14}-\eqref{fPSDZ14:b1}  with $\CSTK $ given by~\eqref{fDK1}. 
We define $\Phi : \Ens \mapsto \xLn{2} ((0,T) \times \R^N )+ \xLn{\infty }  ((0,T) \times \R^N )$ by
\begin{equation} \label{fNVDP1} 
 \Phi (w) (t) = e ^{t H} \DIbd  +  \int _0^t  e^{ (t-s) H } \Md w (s) \xdif s .
\end{equation} 
(Recall that $ \Phi (w) \in \xLn{{ \frac {2N} {N-2} }} ((0,T) \times \R^N ) + \xLn{\infty } ((0, T) \times \R^N ) $ is well defined by Lemma~\ref{eLemSol1}.)
We will show that $\Phi(\Ens)\subset \Ens$ and that $\Phi$ has a unique fixed point in $\Ens$.

Using~\eqref{ePSSZ1:5}, we write  $ | \DIbd  |  \le  \| \DIbd  \| _{ \xLn{\infty }  } \chi _0 $, and we deduce from~\eqref{fPSD},~\eqref{fDK1} and~\eqref{fDFM:1}   that
\begin{equation}  \label{fPSw0} 
 e ^{t H } | \DIbd  |   \le  \| \DIbd  \| _{ \xLn{\infty }  } \CSTtu    h(t )   (\chi  _{ 1 } + e^{-\frac { a _{ 1 }^2} {2at}} ) \le \frac {\CSTK } {2}h(t )   (\chi  _{ 1 } + t^{\frac {m} {2}} )\le \frac {\CSTK} {2}\Theta.
  \end{equation} 
for all $0< t< T$.  If $w\in \Ens$ then by~\eqref{fpPSS4:1} with $z=0$,  
\begin{equation} \label{fPSDZ10} 
|\Md w( s ) |\le   \CSTtd  \CSTK \Theta ( s ) g( s ) =  \CSTtd  \CSTK g( s  ) h( s  ) \Bigl(  s^{\frac {m} {2}} + \sum_{ j=1 }^m s^{\frac {j-1} {2}} \chi _j   \Bigr). 
\end{equation} 
Let now $1\le j\le m$. It follows from~\eqref{fPSRM:b2}, \eqref{fPSchi:b1} and~\eqref{fDFM:4}  that if $|y|>a_j$ and $s<1$, then
\begin{equation} \label{fPSGj} 
  g(s, y) h(s, y)  \le \CSTu  \bigl( 1 +  | y |^{-\frac {N+2} {2} } \bigr)  \le \CSTu  \bigl( 1 +  a_j^{-  \frac {N+2} {2} } \bigr)
  \le \CSTu  \bigl( 1 +  a_m^{ - \frac {N+2} {2} } \bigr) \le \frac {1} {8 T^{\frac {1} {2}} \CSTtu   \CSTtd  }.
\end{equation} 
We deduce from~\eqref{fPSGj} and~\eqref{fPSD} that 
\begin{equation*} 
[e^{ (t-s)  H} g(s) h(s)\chi_j ](x)  \le  \frac {1} {8 T^{\frac {1} {2}}   \CSTtd  } h( t -s ,x) 
 \Bigl(  e^{-\frac { a _{ j+1 }^2} {2 \cstpa (t-s) }} + \chi  _{ j+1 }  \Bigr) 
\le  \frac {1} {8 T^{\frac {1} {2}}   \CSTtd  } h( t   , x)  \Bigl(  e^{-\frac { a _{ j+1 }^2} {2 \cstpa t}} +  \chi  _{ j+1 } \Bigr)
\end{equation*}
and so 
\begin{equation} \label{fPSDZ18}
\int_0^t   e^{(t-s)H}  g(s)h(s)s^{\frac {j-1} {2}}\chi_j  \xdif s \le  \frac {1} {8 T^{\frac {1} {2}}   \CSTtd  } h(t,x) 
 \Bigl(  e^{-\frac { a _{ j+1 }^2} {2 \cstpa t}} + \chi  _{ j+1 }  \Bigr)  t^{\frac {j+1} {2}}. 
\end{equation}   
Since  $t<\frac {1} {4}$, we have $ \sum_{j=1}^\infty  t^{\frac {j+1} {2}}\le \frac {1} {2}$. Thus, \eqref{fPSDZ18} yields 
\begin{equation} \label{fPSDZ19}
\int_0^t e^{(t-s)H}  g(s)h(s)\sum_{j=1}^m s^{\frac {j-1} {2}}\chi_j  \xdif s \le  \frac {1} {8 T^{\frac {1} {2}}   \CSTtd  }  h(t,x)   \Bigl( e^{-\frac { a _{ m+1 }^2} {2 \cstpa t}}+ \sum_{j=1}^m t^{\frac {j+1} {2}}  \chi  _{ j+1 }  \Bigr).
\end{equation}  
Using~\eqref{fPStheta},\eqref{fPSRM} with $\varepsilon =0$, and \eqref{fPSDZ19} we obtain
\begin{equation}  \label{fESM2} 
\begin{split}
\int_0^t e^{(t-s)H} \Theta (s) g(s)  \xdif s 
& \le   h(t)\Biggl ( \CSTR{m}  t^{\frac {m} {2}}+  \frac {1} {8 T^{\frac {1} {2}}   \CSTtd  } \Bigl( e^{-\frac { a _{ m+1 }^2} {2 \cstpa t}}+ \sum_{j=1}^m t^{\frac {j+1} {2}}  \chi  _{ j+1 }  \Bigr) \Biggr )  \\
& =   h(t)\Biggl ( \CSTR{m}  t^{\frac {m} {2}}+  \frac {1} {8 T^{\frac {1} {2}}   \CSTtd  } \Bigl( e^{-\frac { a _{ m+1 }^2} {2 \cstpa t}}+  \sum_{j=2}^{m +1} t^{\frac {j } {2}}  \chi  _{ j }  \Bigr) \Biggr ) .
\end{split}
\end{equation} 
In particular, since $m\ge 2$, we see that
\begin{equation} \label{fESM1} 
\int_0^t e^{(t-s)H} \Theta (s) g(s)  \xdif s \le C t h(t) .
\end{equation} 
Since $\chi _1\ge 0$, $\chi  _{ m+1 }\le 1$, and $e^{-\frac {a_{m+ 1} ^{2}}{2 \cstpa t}} \le  t^{\frac {m + 1 } {2} }$ by~\eqref{fDFM:1}, we deduce from~\eqref{fESM2}  that
\begin{equation*}
\begin{split}
\int_0^t e^{(t-s)H} \Theta (s) g(s)  \xdif s & \le   h(t)\Biggl ( \CSTR{m}  t^{\frac {m} {2}}+  \frac {1} {8 T^{\frac {1} {2}}   \CSTtd  } \Bigl(  t^{\frac {m+1 } {2}} + t^{\frac {1} {2}} \sum_{j=1}^{m } t^{\frac {j-1} {2}}  \chi  _{ j }  \Bigr) \Biggr )  \\
& \le   h(t)\Biggl ( \CSTR{m}  t^{\frac {m} {2}}+  t^{\frac {1} {2}}  \frac {1} {8 T^{\frac {1} {2}}   \CSTtd  }\Bigl(  t^{\frac {m } {2}} +  \sum_{j=1}^{m } t^{\frac {j-1} {2}}  \chi  _{ j }  \Bigr) \Biggr ) .
\end{split}
\end{equation*} 
Applying~\eqref{fDRM} and~\eqref{fDFM:4} we obtain
\begin{equation} \label{fPSDZ20b}
\int_0^t e^{(t-s)H} \Theta (s) g(s)  \xdif s  \le \frac {1} {2  \CSTtd  }   h(t)\Bigl (  t^{\frac {m} {2}}+  \sum_{j=1}^{m } t^{\frac {j-1} {2}}  \chi  _{ j }  \Bigr)  =  \frac {1} {2  \CSTtd  }  \Theta (t) .
\end{equation} 
Using the first inequality in~\eqref{fPSDZ10}, we deduce that
\begin{equation} \label{fPSDZ20}
\begin{split}
\Bigl |\int_0^t &e^{(t-s)H} \Md w(s)  \xdif s\Bigr |\le \frac {\CSTK} {2}\Theta .
\end{split}
\end{equation} 
With the notation~\eqref{fNVDP1},  it follows from~\eqref{fPSDZ20} and~\eqref{fPSw0} that $\Phi (\Ens)\subset \Ens$. Moreover,  if $w,z\in \Ens$, we deduce from~\eqref{fpPSS4:1} and~\eqref{fPSDZ14:b1}  that 
\begin{equation*} 
|\Md w(s)-\Md z(s)|\le   \CSTtd  |w (s) -z (s) | g(s)\le   \CSTtd  d(w,z)\Theta (s) g (s) .
\end{equation*} 
Using again~\eqref{fPSDZ20b}  we obtain  
\begin{equation*}
\begin{split}
|\Phi (w)(t)-\Phi(z)(t)| \le  \int_0^t &e^{(t-s)H} |\Md w(s)-\Md z(s)|  \xdif s\le \frac {1} {2}\Theta (t) d(w,z).
\end{split}
\end{equation*} 
Therefore, $\dist ( \Phi (w), \Phi (z)) \le \frac {1} {2} d(w,z)$, so that   $\Phi $ has a unique fixed point $w \in  \Ens$.  
We deduce from Lemma~\ref{eLemSol1} that
\begin{equation*} 
\partial _t w - \Delta w - \beta (\alpha +1)  |x|^{-2} w = \Md w 
\end{equation*} 
in ${\mathcal D}' ((0, T) \times  \R^N )$. 
Therefore,
\begin{equation} \label{fEquw} 
\partial _t w - \Delta w   =  | U + w|^\alpha (U +w) -  |U|^\alpha U 
\end{equation} 
in ${\mathcal D}' ((0, T) \times \R^N )$, by~\eqref{fPT3}.  
Since the right-hand side of the above equation is in $\xLn{\infty } _\Loc ( (0,T) \times (\R^N \setminus \{0\} )$ by $ w\in \Ens $ and~\eqref{fPShrho}, it follows from Lemma~\ref{eLemLHISP7} that  $w \in \mathrm{C} ((0,T) \times ( \R^N \setminus \{0\})$. 

We now set
\begin{equation} \label{fDfnuu} 
u = U + w ,
\end{equation} 
so that $u\in \xLn{{\alpha +1}} _\Loc ((0,T) \times \R^N ) \cap \mathrm{C} (( 0,T) \times ( \R^N \setminus \{0\}  ) )$ is a solution of~\eqref{fPS4:b8} in ${\mathcal D}' ((0, T) \times   \R^N  )$.
Estimate~\eqref{fPS4:b7} follows from~\eqref{fDfnuu} and $w\in \Ens$.
Since $w\in \Ens$, we have $ |\Md w | \le C  \Theta g $ by~\eqref{fpPSS4:1} with $z=0$, so it follows from~\eqref{fESM1} that
\begin{equation} \label{fEstwmwz} 
 | w(t)- e^{t H } \DIbd | \le C t h(t) .
\end{equation} 
Finally, since $u\in  \mathrm{C} (( 0,T) \times ( \R^N \setminus \{0\}  ) )$, we see that $u(t,x)$ is defined for all $0<t<T$ and $x\not = 0$, and using~\eqref{fPS4:b7}, we deduce that $u(t) \in \xLn{1} _\Loc (\R^N ) $ for all $0<t<T$.
Moreover, given $R>0$,
\begin{equation*} 
\begin{split} 
 \|u(t) - \DI \| _{ \xLn{1}  (\{   |x| < R \} ) }  \le &  \|U(t) - U(0)   \| _{ \xLn{1} (\{   |x| < R \} ) } + \|  w(t)  - e^{tH } \DIbd  \| _{ \xLn{1} (\{   |x| < R \} ) } 
\\ & +    \|  e^{tH } \DIbd - \DIbd \| _{ \xLn{1} (\{   |x| < R \} ) }  \goto  _{ t\to 0 } 0,
\end{split} 
\end{equation*} 
by~\eqref{fDfnUdz}, \eqref{fEstwmwz},  and  Lemma~\ref{eLemLHISP4}. This proves~\eqref{fPS4:b8} and completes the proof of Theorem~\ref{ePSSZ1}. 

\section{Singular, radially symmetric, stationary solutions} \label{sStatSing} 
Let 
\begin{equation} \label{fStat1} 
N\ge 3 \quad  \text{and}\quad  \frac {2} {N-2} < \alpha  < \frac {4} {N-2} ,
\end{equation} 
and let $\beta >0$ be defined by~\eqref{bta}. 
We study radially symmetric, possibly sign-changing stationary solutions of~\eqref{NLHE}. 
Such solutions satisfy the ODE
\begin{equation} \label{fStat2} 
u'' + \frac {N-1} {r} u' +  |u|^\alpha  u=0.
\end{equation} 
It follows easily from energy arguments that any solution of~\eqref{fStat2} on some interval $(a,b)$ with $0\le a< b\le \infty $  can be extended to a solution on $(0, \infty )$.  

All the positive solutions of~\eqref{fStat2} are known, see~\cite[Proposition~3.1]{SerrinZ}. They consist of the solution $  \beta ^{\frac {1} {\alpha }} |x|^{- \frac {2} {\alpha }}$ and a one-parameter family  $(u _\lambda )  _{ \lambda >0 } \subset \xCn{2} ( 0,\infty )$ 
of singular solutions satisfying $ r^{\frac {2} {\alpha }} u_\lambda (r) \to \beta ^{\frac {1} {\alpha }}  $ as $r\to 0$ and $ r ^{N-2} u_\lambda (x) \to \lambda $ as $ r \to \infty $.
The negative solutions of~\eqref{fStat2} are therefore $ - \beta ^{\frac {1} {\alpha }} |x|^{- \frac {2} {\alpha }}$ and   $(- u _\lambda )  _{ \lambda >0 }$.
All other solutions of~\eqref{fStat2} are therefore sign-changing, and they consist of the one-parameter family of regular solutions, i.e. the solutions of~\eqref{fStat2}  with the initial conditions $u(0)= c\in \R$, $u'(0)=0$ and the (two-parameter family) sign-changing, singular solutions. More precisely, we have the following result.

\begin{prpstn} \label{eStat1} 
Assume~\eqref{fStat1} and let $\beta >0$ be defined by~\eqref{bta}. 
If $u\in \xCn{2} (0,\infty )$, $u\not \equiv 0$, is a solution of~\eqref{fStat2}, then the following properties hold.
\begin{enumerate}[{\rm (i)}] 

\item \label{eStat1:z1} 
$ |u(r) |\le  C  r ^{- (N-2 - \frac {2} {\alpha })}$ for $r> 1$.
In addition, either $u(r)$ has a finite limit as $r\to 0$, or else $r^{\frac {2} {\alpha }} u(r) \to \pm  \beta ^{\frac {1} {\alpha }}  $ as $r\to 0$. 
In the first case, $u ' (r)\to 0$ as $r\to 0$, and in the second case $r^{\frac {2} {\alpha } +1} u'(r) \to \mp \frac {2} {\alpha }  \beta ^{\frac {1} {\alpha }} $. 

\item \label{eStat1:2} 
$u$ satisfies one of the following properties.

\begin{enumerate}[{\rm (a)}] 
\item \label{eStat1:2:1} 
$u$ is a regular solution of~\eqref{fStat2}, i.e. $u\in \xCn{2} ([0,\infty ))$ and $u'(0)=0$. In this case, $u$ oscillates indefinitely as $r\to \infty $. 

\item \label{eStat1:2:2}
$u$ is a constant-sign, singular solution of~\eqref{fStat2},  i.e. either $u = \pm \beta ^{\frac {1} {\alpha }} r^{- \frac {2} {\alpha }}$ or else $u=  \pm u_\lambda $  for some $\lambda >0$ where $u_\lambda $ is as defined above. 

\item \label{eStat1:2:3}
$u$ is a singular, sign-changing solution. In this case, $r^{\frac {2} {\alpha }} u(r) \to \pm \beta ^{\frac {1} {\alpha }} $ as $r\to 0$, and $u$ oscillates indefinitely as $r \to \infty $.
\end{enumerate} 

\item \label{eStat1:3} 
$u$ satisfies~\eqref{eStat1:2:3} if and only if there exists $r_0>0$ such that 
\begin{equation} \label{feStat1:3} 
\frac {r_0^2} {2}  |u'(r_0)|^2 + \frac {2} {\alpha } r_0 u(r_0) u'(r_0) + \frac {1} {\alpha +2} r_0^2  |u(r_0)|^{\alpha +2} + \frac {4- (N-2)\alpha } {\alpha ^2}  |u(r_0)|^2 =0 .
\end{equation} 

\item \label{eStat1:1} 
Let $U\in \xCn{2} ( \R^N \setminus \{ 0 \} )$ be defined by $U(x)= u( |x| ) $ for $x\not = 0$. It follows that $U\in \xLn{{\alpha +1}}_\Loc (\R^N )$ and that $U$ is a  solution of 
\begin{equation} \label{feStat1:1} 
- \Delta U=  |U|^\alpha U
\end{equation} 
 in ${\mathcal D}' (\R^N )$.

\end{enumerate} 

\end{prpstn}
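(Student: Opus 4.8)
The plan is to reduce the whole analysis to an autonomous second-order ODE by the Emden--Fowler substitution $u(r)=r^{-2/\alpha}w(t)$, $t=\log r$, which turns~\eqref{fStat2} into
\begin{equation*}
w''+\kappa\,w'-\beta\,w+|w|^\alpha w=0,\qquad \kappa:=N-2-\frac4\alpha ,
\end{equation*}
with $\kappa<0$ by~\eqref{fStat1}. Its energy $E(t)=\frac12(w')^2-\frac\beta2 w^2+\frac1{\alpha+2}|w|^{\alpha+2}$ satisfies $E'=-\kappa(w')^2\ge0$, so $E$ is nondecreasing in $t$, and a direct computation gives $E(\log r)=r^{4/\alpha}\mathcal{P}(r)$, where $\mathcal{P}(r)$ denotes the left-hand side of~\eqref{feStat1:3}; in particular $\mathcal{P}(r_0)=0$ iff $E(\log r_0)=0$. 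Moreover $V(w)=-\frac\beta2 w^2+\frac1{\alpha+2}|w|^{\alpha+2}$ is a coercive double well with $V(0)=0>V(\pm\beta^{1/\alpha})$, the equilibria of the autonomous equation are $0$ and $\pm\beta^{1/\alpha}$ (the last two giving $u=\pm\beta^{1/\alpha}r^{-2/\alpha}$), and $w=0$ is a hyperbolic rest point whose positive eigenvalue equals $2/\alpha$. Everything below is read off from the behaviour of $w$ as $t\to\pm\infty$.

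For~(i), as $t\to-\infty$: monotonicity of $E$ bounds $E$, hence $V(w)$, from above on $(-\infty,t_0]$, so coercivity keeps $w$ bounded there and then $w',w''$ bounded; since $E'=|\kappa|(w')^2$ and $E$ is bounded below, $w'\in\xLn{2}(-\infty,t_0)$, hence $w'(t)\to0$, and integrating the equation over unit intervals shows every subsequential limit of $w(t)$ solves $\beta w-|w|^\alpha w=0$, so $w(t)$ converges (the equilibrium set being finite) to $0$ or to $\pm\beta^{1/\alpha}$. If $w\to\pm\beta^{1/\alpha}$, then $r^{2/\alpha}u(r)\to\pm\beta^{1/\alpha}$, and since $u'=r^{-2/\alpha-1}(-\frac2\alpha w+w')$ with $w'\to0$, also $r^{2/\alpha+1}u'(r)\to\mp\frac2\alpha\beta^{1/\alpha}$. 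If $w\to0$, then $w$ lies on the unstable manifold of the saddle, so $w(t)\sim c\,e^{(2/\alpha)t}$ and $u(r)\to c$; then $u$ is bounded near $0$, hence, by the standard removable-singularity argument for this ODE, extends to a $\xCn{2}$ solution with $u'(0)=0$. The bound $|u(r)|\le Cr^{-(N-2-2/\alpha)}$ for $r>1$ comes from $E$: Gronwall on $E'=2|\kappa|\bigl(E-V(w)\bigr)\le2|\kappa|(E+C_0)$, $C_0:=-\min V$, gives $E(t)\le Ce^{2|\kappa|t}$, then $|w|^{\alpha+2}\le C(E+C_0)$ gives $|w(t)|\le Ce^{\frac{2|\kappa|}{\alpha+2}t}\le Ce^{|\kappa|t}$ for $t\ge0$, i.e. $|u(r)|\le Cr^{|\kappa|-2/\alpha}=Cr^{-(N-2-2/\alpha)}$.

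For~(ii) and~(iii): a nontrivial $u$ has, by the above, either $w\to0$ at $-\infty$ — then~(i) makes it a regular solution (case~(a)) — or $w\to\pm\beta^{1/\alpha}$ — then $u$ is singular, and if it keeps a constant sign it is, by~\cite[Proposition~3.1]{SerrinZ}, one of $\pm\beta^{1/\alpha}r^{-2/\alpha}$ or $\pm u_\lambda$ (case~(b)), otherwise case~(c). What remains is that regular and singular sign-changing solutions change sign infinitely often as $r\to\infty$. If instead $u>0$ on $[R,\infty)$, then from $(r^{N-1}u')'=-r^{N-1}u^{\alpha+1}<0$ and elementary monotonicity one gets that $u$ is eventually decreasing with $u(r)\to0$, that $r^{N-1}u'(r)$ has a finite negative limit, and hence $u(r)\sim\lambda r^{-(N-2)}$ for some $\lambda>0$; thus $w=r^{2/\alpha}u\to0$ and $w'\to0$ as $t\to+\infty$, so $E(+\infty)=V(0)=0$. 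For a regular solution we also have $E(-\infty)=V(0)=0$, so $E\equiv0$, hence $w\equiv0$ and $u\equiv0$, impossible. For a singular sign-changing solution, at any zero $t_c$ of $w$ one has $w'(t_c)\ne0$ (else $w\equiv0$), so $E(t_c)=\frac12 w'(t_c)^2>0=E(+\infty)$, impossible for a nondecreasing $E$. This proves~(ii). For~(iii): a singular sign-changing $u$ has $E$ running from $V(\pm\beta^{1/\alpha})<0$ at $-\infty$ to a positive value at a sign change, so $E$ vanishes somewhere and $\mathcal{P}(r_0)=0$; conversely, $\mathcal{P}(r_0)=0$ means $E(\log r_0)=0$, forcing $E\le0$ to the left, so $w$ tends to an equilibrium at $-\infty$ which cannot be $0$ (that would give $E\equiv0$ near $-\infty$, hence $w\equiv0$), so $u$ is singular; and $u$ is not a constant-sign singular solution, since for those $\mathcal{P}<0$ everywhere (immediate for $\pm\beta^{1/\alpha}r^{-2/\alpha}$; for $\pm u_\lambda$ because $E(+\infty)=0$ and $E$ is nondecreasing but not identically zero), contradicting $E(\log r_0)=0$. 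Hence $u$ is singular sign-changing.

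For~(iv): since $|u(r)|\le Cr^{-2/\alpha}$ near $0$ by~(i), $|U|^{\alpha+1}\le C|x|^{-2-2/\alpha}$ lies in $\xLn{1}_\Loc(\R^N)$ precisely because $\alpha>\frac2{N-2}$, so $U\in\xLn{\alpha+1}_\Loc(\R^N)$ and $|U|^\alpha U\in\xLn{1}_\Loc(\R^N)$. Since $U$ solves~\eqref{feStat1:1} classically on $\R^N\setminus\{0\}$, testing $-\Delta U$ against $\varphi\in\xCn{\infty}_\Comp(\R^N)$ by Green's identity on $\{|x|>\varepsilon\}$ leaves boundary terms over $\{|x|=\varepsilon\}$ bounded by $C\varepsilon^{N-1}\bigl(|u(\varepsilon)|+|u'(\varepsilon)|\bigr)=O(\varepsilon^{N-2-2/\alpha})$, which vanishes as $\varepsilon\to0$ since $\alpha>\frac2{N-2}$, so~\eqref{feStat1:1} holds in ${\mathcal D}'(\R^N)$. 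The main obstacle, I expect, is the infinite oscillation at infinity in~(ii): excluding that a regular or singular sign-changing solution eventually keeps a constant sign forces one to couple the exterior ODE analysis (which pins down $u\sim\lambda r^{-(N-2)}$) with the monotonicity of $E$, and the same mechanism drives the ``only if'' direction of~(iii); the LaSalle-type convergence of $w$ to an equilibrium as $r\to0$ is a secondary technical point.
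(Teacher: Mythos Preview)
Your approach is essentially the paper's: the same Emden--Fowler substitution (you use $t=\log r$ while the paper uses $s=-\log r$, so your $\kappa=-\gamma$ and your $E$ is the paper's $f$ with time reversed), the same energy monotonicity, and the same translation $E(\log r)=r^{4/\alpha}\mathcal P(r)$ to characterize case~(c). Parts~(i), (iii) and~(iv) match the paper's argument closely.

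The one substantive difference, and the one genuine gap, is in the proof of infinite oscillation for (ii)(a) and (ii)(c). The paper argues: if $u>0$ for large $r$, then by \cite[Lemma~2.0]{SerrinZ} $u>0$ on all of $(0,\infty)$, and then \cite[Proposition~3.1]{SerrinZ} classifies $u$ as either $\beta^{1/\alpha}r^{-2/\alpha}$ or some $u_\lambda$; in either case the energy satisfies $f<0$ (respectively $f\equiv F_\star$ or $f(s)\to 0^-$ as $s\to-\infty$), contradicting $f(s_0)\ge 0$. You instead try to show directly that a solution with $u>0$ on $[R,\infty)$ has fast decay $u(r)\sim\lambda r^{-(N-2)}$. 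The step ``$r^{N-1}u'(r)$ has a finite negative limit'' is not justified by elementary monotonicity: $r^{N-1}u'$ is decreasing, but its limit could be $-\infty$, which corresponds to slow decay $u\sim\beta^{1/\alpha}r^{-2/\alpha}$ (equivalently $w\to\beta^{1/\alpha}$ as $t\to+\infty$), and a priori one must also rule out $w$ unbounded. Your bound $|w(t)|\le Ce^{\frac{2|\kappa|}{\alpha+2}t}$ from part~(i) does not exclude this. The fix is not hard: since the linearization of the $w$-equation at $\beta^{1/\alpha}$ has both eigenvalues with positive real part, $\beta^{1/\alpha}$ is a source as $t\to+\infty$, so only the constant trajectory converges to it; and if $w>0$ reaches a large local maximum $M$ then $E\ge V(M)>0$ forever after, which forbids a subsequent positive local minimum (where $E=V(m)\le 0$), forcing $w$ to cross zero. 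With these two observations you get that any positive trajectory on $[T,\infty)$ which is not identically $\beta^{1/\alpha}$ satisfies $w\to 0$, hence $E(+\infty)=0$, and your contradictions go through. Alternatively, you can simply cite Serrin--Zou as the paper does.
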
 

\begin{proof} 
Given a solution $u\in C^2(0,\infty )$ of~\eqref{fStat2}, we set (see~\cite[equation~(6)]{MazzeoP})
\begin{equation} \label{fStat3} 
u (r) = r^{- \frac {2} {\alpha }} v(s), \quad s= - \log r, 
\end{equation} 
so that $v\in \xCn{2} (\R) $ satisfies
\begin{equation}  \label{fStat4}
v '' + \gamma v' - \beta v +  |v|^\alpha v=0
\end{equation} 
for all $s\in \R$, where $\beta $ is given by~\eqref{bta} and
\begin{equation}  \label{fStat5}
 \gamma = \frac {4} {\alpha }- N + 2 >0. 
\end{equation} 
(Note that we use~\eqref{fStat1} to obtain $\beta , \gamma >0$.)
Equation~\eqref{fStat4} has the stationary solutions $0$ and $\pm \beta ^{\frac {1} {\alpha }}$, corresponding to the solutions $u(r) \equiv 0$ and $u(r) \equiv  \pm \beta ^{\frac {1} {\alpha }} r^{- \frac {2} {\alpha }}$ of~\eqref{fStat2}. 
We now suppose that  $v$ is not a stationary solution of~\eqref{fStat4}.
Setting 
\begin{equation}  \label{fStat6} 
F(v, v')= \frac {1} {2}  |v'|^2 + \frac {1} {\alpha +2}  |v|^{\alpha +2} - \frac {\beta  } {2}  |v|^2,
\end{equation} 
we see that 
\begin{equation} \label{fStat7} 
f(s) = : F(v(s), v'(s)) 
\end{equation} 
satisfies
\begin{equation} \label{fStat8} 
f'(s) + \gamma  | v'(s)|^2 = 0.
\end{equation} 
We first note that, since $v$ is not a stationary solution, $\int  _{ s_0 - 1 }^{s_0 }  |v'|^2 >0$ for all $s<s_0$, and
it follows from~\eqref{fStat8}, that 
\begin{equation} \label{fStat8:b1} 
f(s_1) > f(s_2)  \quad  \text{for all}\quad s_1<s_2. 
\end{equation} 
Next,
\begin{equation} \label{fStat9} 
F(v, v') \ge F( \beta ^{\frac {1} {\alpha }}, 0 )= - \frac {\alpha } {2( \alpha +2) } \beta ^{\frac {\alpha +2} {\alpha }}= : F_\star .
\end{equation} 
It follows that $f$ is bounded from below, so that $f (s)$ decreases to a limit $f_\infty $ as $s\to \infty $.
We deduce easily that
\begin{equation} \label{fSupfin1} 
\sup  _{ s\ge 0 } (  |v(s)| +  | v' (s) |) <\infty ,
\end{equation} 
 and
\begin{equation} \label{fStat10}
\gamma \int _s ^\infty | v'(\tau )|^2 \xdif \tau = f(s) - f_\infty  <\infty . 
\end{equation} 
Using~\eqref{fSupfin1}-\eqref{fStat10}, it follows by standard arguments that 
\begin{equation} \label{fStat10:b1}
v'(s) \goto _{ s\to \infty  }0, \quad v(s) \goto _{ s\to \infty  }\ell, \quad  \text{with}\quad \ell=0 \quad  \text{or}\quad \ell=    \pm \beta ^{\frac {1} {\alpha }} .
\end{equation} 
We now proceed in seven steps.

\Step1 The case $\ell= 0$ in~\eqref{fStat10:b1}. This corresponds to the case where $u(r) $ has a finite limit as $r\to 0$, and then $u'(r) \to 0$ as $r\to 0$. (See~\cite[formula~(2.4)]{SerrinZ}.)
Note that in this case, $f(s) \to 0$ as $s\to \infty $, so that by~\eqref{fStat8:b1}
\begin{equation}  \label{fStat13:b6} 
f (s) >0,\quad s\in \R. 
\end{equation} 

\Step2 The case $ \ell = \pm \beta ^{\frac {1} {\alpha }}$  in~\eqref{fStat10:b1}. 
This means that 
\begin{equation}  \label{fStat13:b5} 
 r^{\frac {2} {\alpha }} u(r) \goto  _{ r\to 0 } \pm \beta ^{\frac {1} {\alpha }}. 
\end{equation} 
In addition, 
\begin{equation}  \label{fStat13:b4} 
r^{\frac {2} {\alpha } +1} u'(r)= - \frac {2} {\alpha } v(s) - v'(s) \goto  _{ r\to 0 } \mp \frac {2} {\alpha }  \beta ^{\frac {1} {\alpha }} .
\end{equation} 
Moreover, 
\begin{equation}  \label{fStat13:b3} 
f(s) \goto  _{ s \to \infty } F_\star <0 . 
\end{equation} 

\Step3 We prove Property~\eqref{eStat1:z1}. 
We first prove that $ |u(r) |\le  C  r ^{- (N-2 - \frac {2} {\alpha })}$ for $r> 1$.
In terms of $v$, this means
\begin{equation} \label{fStat19}
 |v (s) | \le C e ^{- \gamma s} , \quad s\le 0. 
\end{equation} 
Integrating~\eqref{fStat8}  on $(s,0)$ we obtain
\begin{equation} \label{fStat17}
 |v '|^2  = \beta  | v |^2 -  \frac {2} {\alpha +2}  | v |^{\alpha +2} + 2 F( v(0), v '(0) ) + 2\gamma  \int _s^0  | v ' (\sigma )| ^2 \xdif \sigma 
  \le C +  2\gamma  \int _s^0  | v ' (\sigma )| ^2  \xdif \sigma,
\end{equation} 
for some constant $C$ so that, by Gronwall's inequality, $ | v ' (s) | \le C e ^{- \gamma s}$ for $s<0$. Estimate~\eqref{fStat19} follows by integration.
The part of Property~\eqref{eStat1:z1} concerning the behavior of $u(r)$ as $r\to 0$ follows from Steps~1 and~2.

\Step4 We prove that if $f(s_0) \ge 0$ for some $s_0\in \R$, then $v$ oscillates indefinitely as $s\to -\infty $. 
Suppose by contradiction that (for instance) $v(s) >0$ for $s< s_1$ with $s_1 < s_0$. This means that $u (r) >0$ for $r$ large. It follows from~\cite[Lemma~2.0]{SerrinZ} that $u (r)>0$ for all $r>0$. 
Applying now~\cite[Proposition~3.1]{SerrinZ}, we deduce that $u$ is either the solution $  \beta ^{\frac {1} {\alpha }} |x|^{- \frac {2} {\alpha }}$ or a solution  $u_\lambda $ for some $\lambda >0$. 
The first case corresponds to $v(s) \equiv \beta ^{\frac {1} {\alpha }} $, so that $f(s) \equiv  F_\star <0$, which is absurd.
In the second case, $u (r) \le c r^{- (N-2)}$, so that $v(s) \to 0$ as $s\to - \infty $. Therefore, $v'(s_n) \to 0$ for some sequence $s_n \to -\infty $, so that $f(s_n) \to 0$ as $n\to \infty $. 
On the other hand, it follows from~\eqref{fStat8:b1} that for $n$ large, $f(s_n) > f (s_0 -1) >0$.
This is again absurd.

\Step5 We prove Property~\eqref{eStat1:2}. 
We consider three cases.

-- If $f(s) >0$ for all $s\in \R$, then $v$ oscillates indefinitely as $s\to -\infty $ by Step~4; and so, $u$ oscillates indefinitely as $r\to \infty $. Moreover, it follows from~\eqref{fStat13:b3} that $ \ell = 0$  in~\eqref{fStat10:b1}.
By Step~1, this implies that  $u$ satisfies~\eqref{eStat1:2:1}. 

-- If $f(s) <0$ for all $s\in \R$, then $ \ell = \pm \beta ^{\frac {1} {\alpha }}$  in~\eqref{fStat10:b1}. 
Indeed, the case $\ell=0$ is ruled out by~\eqref{fStat13:b6}. Moreover, $v$ cannot vanish, for if $v(s)=0$, then $f(s) >0$. 
Therefore, $u$ is a constant-sign singular solution of~\eqref{fStat2}, and it follows from~\cite[Proposition~3.1]{SerrinZ} that 
$u$ satisfies~\eqref{eStat1:2:2}. 

-- If $f(s_0 ) =0$ for some $s_0\in \R$, then $v$ oscillates indefinitely as $s\to -\infty $ by Step~4; and so, $u$ oscillates indefinitely as $r\to \infty $. In addition, it follows from~\eqref{fStat8:b1} that $f(s) <0$ for $s >s_0$. Therefore, $ \ell = \pm \beta ^{\frac {1} {\alpha }}$  in~\eqref{fStat10:b1}. Indeed, the case $\ell=0$ is ruled out by~\eqref{fStat13:b6}. Therefore, $u$ is a singular solution of~\eqref{fStat2} that oscillates indefinitely as $r\to \infty $. This means that $u$ satisfies~\eqref{eStat1:2:3}. 

\Step6 We prove Property~\eqref{eStat1:3}. 
The discussion in Step~5 shows that $u$ satisfies~\eqref{eStat1:2:3} if and only $f(s_0) =0$ for some $s_0 \in \R$. 
This is equivalent to~\eqref{feStat1:3} with $r_0= e^{- s_0}$. 
Note that $f$ is decreasing by~\eqref{fStat8:b1}, so that $f$ can vanish at most for one value of $s$, and therefore~\eqref{feStat1:3} can be satisfied at most for one value of $r_0$. 

\Step7 We prove Property~\eqref{eStat1:1}. 
This is well known if $u$ is a regular solution. In the general case, $ |U(x)|\le C (1 +  |x|^{-\frac {2} {\alpha } })$ by Property~\eqref{eStat1:z1}; and so $U\in \xLn{{\alpha +1}} _\Loc (\R^N )$ by~\eqref{fStat1}. Let now $\varphi \in \mathrm{C} ^\infty _\Comp (  \R^N)$. 
Since $U\in \xLn{{\alpha +1}} _\Loc (\R^N )$, we see that
\begin{equation} \label{fStat18} 
\Bigl|  \int_{ \{  |x|<\varepsilon  \}} ( U \Delta \varphi  + | U |^\alpha U \varphi  ) \Bigr|  \le C   \int_{ \{  |x|<\varepsilon  \}} ( |U | +  | U |^{ \alpha +1})\goto _{ \varepsilon \downarrow 0 } 0 .
\end{equation}  
On the other hand, $U $ is a classical solution of~\eqref{feStat1:1} on $  \R^N  \setminus \{ 0\} $, so that integration by parts yields
\begin{equation*} 
\int_{ \{  |x| >\varepsilon  \}} ( U   \Delta \varphi   + | U |^\alpha U  \varphi ) = - \int_{ \{  |x|=\varepsilon  \}}  \Bigl( U  \frac {\partial \varphi } {\partial r}- \varphi \frac {\partial  U } {\partial r}  \Bigr)    .
\end{equation*} 
Since $\varepsilon ^{N-1} (  | u(\varepsilon )| +  | u' (\varepsilon )|  ) \to 0$ as $\varepsilon \to 0$ by Property~\eqref{eStat1:z1},
we deduce that
\begin{equation} \label{fStat19:b1} 
\int_{ \{  |x| >\varepsilon  \}} ( U   \Delta \varphi   + | U |^\alpha U  \varphi ) \goto _{ \varepsilon \to 0  } 0  .
\end{equation} 
Estimates~\eqref{fStat18} and~\eqref{fStat19:b1} imply 
\begin{equation*} 
\int_{\R^N}  (U  \Delta \varphi   + | U |^\alpha U \varphi )=0 .
\end{equation*}  
Since $\varphi \in \xCn{\infty } _\Comp (  \R^N)$ is arbitrary, we see that $U$ is a  solution of~\eqref{feStat1:1}  in ${\mathcal D}' (\R^N )$.
\end{proof} 

\begin{crllr} \label{eRemSum} 
Assuming~\eqref{fStat1}, there exists a two-parameter family of radially symmetric, sign-changing, stationary solutions $U\in \xCn{2} ( \R^N \setminus \{ 0 \} ) \cap \xLn{{\alpha +1}}_\Loc (\R^N )$ of~\eqref{NLHE} in the sense of distributions, which are singular at $x=0$.  
These solutions satisfy $ |U( x) |\le  C   | x | ^{- (N-2 - \frac {2} {\alpha })}$ for $ |x|> 1$ and oscillate indefinitely as $ |x| \to \infty $. 
\end{crllr}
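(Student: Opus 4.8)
The plan is to read the corollary off from Proposition~\ref{eStat1}, the only genuinely new point being that the class of singular, sign-changing solutions of~\eqref{fStat2} is nonempty, and in fact two-parameter. Recall from Proposition~\ref{eStat1} that a nonzero $u\in\xCn{2}(0,\infty)$ solving~\eqref{fStat2} lies in exactly one of the cases~\eqref{eStat1:2:1}--\eqref{eStat1:2:3}, and that it lies in the singular, sign-changing case~\eqref{eStat1:2:3} if and only if~\eqref{feStat1:3} holds for some (necessarily unique) $r_0>0$. For such a $u$, Property~\eqref{eStat1:2} gives $r^{\frac2\alpha}u(r)\to\pm\beta^{\frac1\alpha}$ as $r\to0$ (so $U(x):=u(|x|)$ is singular at the origin) and that $u$ oscillates indefinitely as $r\to\infty$; Property~\eqref{eStat1:z1} gives $|U(x)|\le C|x|^{-(N-2-\frac2\alpha)}$ for $|x|>1$; Property~\eqref{eStat1:1} gives $U\in\xLn{{\alpha +1}}_\Loc(\R^N)$ and $-\Delta U=|U|^\alpha U$ in ${\mathcal D}'(\R^N)$, i.e. $U$ is a radially symmetric stationary distributional solution of~\eqref{NLHE}; and $U\in\xCn{2}(\R^N\setminus\{0\})$ because $u\in\xCn{2}(0,\infty)$. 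Thus the whole statement reduces to exhibiting a two-parameter family of solutions of~\eqref{fStat2} in case~\eqref{eStat1:2:3}.

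To produce them I would prescribe Cauchy data for~\eqref{fStat2} at a free point $r_0>0$ subject to the single scalar constraint~\eqref{feStat1:3}. This is cleanest in the variables of~\eqref{fStat3}: with $s_0=-\log r_0$, and writing $v_0=r_0^{\frac2\alpha}u(r_0)$, $v_1=-\frac2\alpha v_0-r_0^{\frac2\alpha+1}u'(r_0)$ (so that $v_0=v(s_0)$, $v_1=v'(s_0)$), the routine algebraic identity already used in the proof of Proposition~\ref{eStat1} shows that~\eqref{feStat1:3} is exactly $F(v_0,v_1)=0$ with $F$ as in~\eqref{fStat6}, equivalently
\[
v_1^{2}=\beta v_0^{2}-\frac{2}{\alpha+2}\,|v_0|^{\alpha+2}.
\]
The right-hand side is strictly positive precisely for $0<|v_0|<m:=\bigl(\tfrac{\beta(\alpha+2)}{2}\bigr)^{1/\alpha}$, so $\{F=0\}\setminus\{0\}$ is a nonempty one-dimensional curve in $\R^2$ avoiding the origin. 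For $(r_0,v_0)\in(0,\infty)\times\bigl((-m,m)\setminus\{0\}\bigr)$ I would take $v_1=+\sqrt{\beta v_0^2-\tfrac2{\alpha+2}|v_0|^{\alpha+2}}>0$, let $v_{r_0,v_0}$ be the solution of~\eqref{fStat4} with $v(s_0)=v_0$, $v'(s_0)=v_1$ — which extends to all of $\R$, since any solution of~\eqref{fStat2} extends to $(0,\infty)$ (the global extension property stated just before Proposition~\ref{eStat1}) — and let $u_{r_0,v_0}$ and $U_{r_0,v_0}(x)=u_{r_0,v_0}(|x|)$ be the corresponding solutions of~\eqref{fStat2} and of~\eqref{NLHE}. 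Since $(v_0,v_1)\neq(0,0)$ we have $u_{r_0,v_0}\not\equiv0$, and $u_{r_0,v_0}$ is not one of the stationary solutions $\pm\beta^{\frac1\alpha}r^{-\frac2\alpha}$, since those correspond to $v\equiv\pm\beta^{\frac1\alpha}$, for which $F(v,v')=F_\star<0$. Hence, by Property~\eqref{eStat1:3}, $u_{r_0,v_0}$ satisfies case~\eqref{eStat1:2:3}, and by the first paragraph $U_{r_0,v_0}$ enjoys all the asserted properties.

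Finally I would check that this is a genuine two-parameter family, i.e. that $(r_0,v_0)\mapsto U_{r_0,v_0}$ is injective. Here the uniqueness clause of Property~\eqref{eStat1:3} is used: a solution of~\eqref{fStat2} in case~\eqref{eStat1:2:3} satisfies~\eqref{feStat1:3} for exactly one value of the radius, so $r_0$ is recovered from $U_{r_0,v_0}$, and then $v_0=r_0^{\frac2\alpha}u_{r_0,v_0}(r_0)$ is recovered as well; thus distinct parameters give distinct solutions.

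With Proposition~\ref{eStat1} in hand this is essentially bookkeeping, and I do not expect a serious obstacle. The only two points needing attention are the reduction of~\eqref{feStat1:3} to $F(v_0,v_1)=0$ — an elementary identity that is anyway already contained in the proof of Proposition~\ref{eStat1} — and the injectivity of the parametrization, which rests entirely on the uniqueness of $r_0$.
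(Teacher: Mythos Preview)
Your proposal is correct and essentially identical to the paper's proof: both reduce everything to Proposition~\ref{eStat1}, parametrize solutions by a free base point $r_0$ together with a point on the level set $\{F=0\}$ (the paper uses $(r_0,a)$ with $v_0=a\beta^{1/\alpha}$, you use $(r_0,v_0)$ directly), and then argue injectivity from the fact that $f$ is strictly decreasing. The only caveat is that the uniqueness of $r_0$ you invoke is not part of the \emph{statement} of Property~\eqref{eStat1:3} but of its proof (Step~6, via~\eqref{fStat8:b1}); the paper makes this explicit in the corollary's proof by re-deriving $s_0^1=s_0^2$ from~\eqref{fStat8:b1}, and you should do the same rather than citing a nonexistent ``uniqueness clause''.
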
 

\begin{proof} 
Applying Proposition~\ref{eStat1}, we need only show that there is a two-parameter family of solutions of~\eqref{fStat2} that satisfy~\eqref{feStat1:3} for some $r_0>0$ (depending on the solution).  
To see this, consider any  $r_0>0$ and $0 < a < (\frac {\alpha +2} {2 })^{\frac {1} {\alpha }}$. Let $b> \frac {2a} {\alpha }$ be defined by
\begin{equation*} 
\frac {1} {2}  \Bigl( b - \frac {2a} {\alpha } \Bigr)^2 = a^2 \frac {(N-2) \alpha -2} {\alpha ^2 (\alpha +2) } [ \alpha +2 - 2 a^\alpha  ] .
\end{equation*} 
If $u$ is the solution of~\eqref{fStat2} defined by $u( r_0) = a \beta ^{\frac {1} {\alpha }} r_0^{- \frac {2} {\alpha }}$ and $u ' ( r_0) = -b  \beta ^{\frac {1} {\alpha }} r_0^{- \frac {2} {\alpha } -1}$, then~\eqref{feStat1:3} is satisfied. 
It remains to prove that two different choices of $(r_0, a )$ yield two different solutions of~\eqref{fStat2}. 
Suppose two choices $r_0^1, r_0^2 >0$ and $a^1, a^2 \in (0,  (\frac {\alpha +2} {2 })^{\frac {1} {\alpha }})$ produce the same solution $u$ of~\eqref{fStat2}. 
With the notation used in the proof of Proposition~\ref{eStat1}, we have $f( s_0^1 ) = f( s_0^2 )$, where $s_0^j = - \log r_0^ j$, $j=1,2$. (See Step~6 of the proof of Proposition~\ref{eStat1}.)
Note that $ u(r)  \not \equiv 0$ since $a^1 \not = 0$, and that $u (r)  \not \equiv \beta ^{\frac {1} {\alpha }}  r^{- \frac {2} {\alpha }}$ since $u$ is sign-changing. Therefore, it follows from~\eqref{fStat8:b1} that $s_0^1 = s_0^2$, so that $r_0^1= r_0^2$. Since $u( r_0^j) = a^j \beta ^{\frac {1} {\alpha }} (r_0^j )^{- \frac {2} {\alpha }}$, $j=1,2$, we conclude that also $a^1= a^2$. This completes the proof.
\end{proof} 

\section{Proof of Theorems~\ref{ePSSZpr1} and~\ref{ePSSZpr2}} \label{sSingProf} 

In this section, we give the proof of Theorems~\ref{ePSSZpr1} and~\ref{ePSSZpr2}, which are consequences of Theorem~\ref{ePSSZ1}.
We begin with the following proposition, which provides refined estimates for the behavior at the origin of both the radially symmetric singular stationary solutions of~\eqref{NLHE} and of the profiles of singular, radially symmetric, self-similar solutions of~\eqref{NLHE}.

\begin{prpstn} \label{eLemCV1} 
Assume~\eqref{fCondAlpha}. Let $a\in \R$ and $u\in \xCn{2}( 0,\infty )$ a solution of
\begin{equation} \label{feLemCV1:1} 
u'' + \Big(\frac{N-1}{r} +  \frac{a r}{2}\Big) u '  + \frac{a}{\alpha} u  +  | u |^{\alpha} u = 0.
\end{equation} 
If
\begin{equation} \label{feLemCV1:2} 
r^{\frac {2} {\alpha }} u(r) \goto _{ r\to 0 } \beta ^{\frac {1} {\alpha }},
\end{equation} 
where $\beta $ is defined by~\eqref{bta}, then there exists a constant $C$ such that
\begin{equation} \label{feLemCV1:3}
| r^{\frac {2} {\alpha }} u (r) - \beta ^{\frac {1} {\alpha }} |  \le C r^\rho ,
\end{equation} 
for all $0<r\le 1$, where $\rho >0$ is defined by~\eqref{fPSrho}.
\end{prpstn}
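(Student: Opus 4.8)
The plan is to reduce \eqref{feLemCV1:1} to the autonomous equation of Section~\ref{sStatSing}, linearize at the equilibrium $\beta^{1/\alpha}$, and run a Duhamel bootstrap.

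First, following~\eqref{fStat3}, I set $u(r)=r^{-2/\alpha}v(s)$ with $s=-\log r$, so that $r\to0$ corresponds to $s\to\infty$ and~\eqref{feLemCV1:3} becomes $|v(s)-\beta^{1/\alpha}|\le Ce^{-\rho s}$ for $s\ge0$. A direct computation shows that $v\in \xCn{2}(\R)$ solves
\[
v''+\gamma v'-\beta v+|v|^\alpha v-\tfrac{a}{2}e^{-2s}v'=0,\qquad \gamma=\tfrac{4}{\alpha}-N+2>0 ,
\]
where the extra terms $\frac{ar}{2}u'+\frac{a}{\alpha}u$ in~\eqref{feLemCV1:1} contribute, after multiplication by $r^{2/\alpha+2}$, only the term $-\frac{a}{2}e^{-2s}v'$ (the zeroth order pieces cancel), and for $a=0$ one recovers exactly~\eqref{fStat4}. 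Hypothesis~\eqref{feLemCV1:2} reads $v(s)\to\beta^{1/\alpha}$ as $s\to\infty$. As in the proof of Proposition~\ref{eStat1} (the exponentially small coefficient $e^{-2s}$ does not affect the argument), the energy $F(v,v')=\frac12|v'|^2+\frac1{\alpha+2}|v|^{\alpha+2}-\frac\beta2|v|^2$ is eventually nonincreasing and bounded below, whence $v$ and $v'$ are bounded, $\int^\infty|v'|^2<\infty$, and $v'(s)\to0$ as $s\to\infty$.

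Next I set $w=v-\beta^{1/\alpha}$, so that $w(s),w'(s)\to0$ and, since $t\mapsto|t|^\alpha t$ is smooth near the positive value $\beta^{1/\alpha}$ and $-\beta\beta^{1/\alpha}+(\beta^{1/\alpha})^{\alpha+1}=0$,
\[
w''+\gamma w'+\alpha\beta w=F(s),\qquad |F(s)|\le C\bigl(e^{-2s}|w'(s)|+|w(s)|^2\bigr)\ \text{for $s$ large}.
\]
By Lemma~\ref{eNUM1}, the hypothesis $\alpha<\alpha_0$ is equivalent to $\Lambda>0$ and to $\beta(\alpha+1)<\frac{(N-2)^2}{4}$, so $\gamma^2-4\alpha\beta=16\Lambda>0$ and the characteristic polynomial $\lambda^2+\gamma\lambda+\alpha\beta$ has two real roots $\lambda_\pm=-\frac\gamma2\pm2\sqrt\Lambda$ with $\lambda_-<\lambda_+<0$; moreover $\lambda_+=-\bigl(\frac2\alpha-\frac{N-2}{2}\bigr)+2\sqrt\Lambda=-\rho$ (equivalently $\rho=2\mu_1$, cf.\ Lemma~\ref{eNUM1}). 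Thus the linear part factors as $\bigl(\frac{d}{ds}-\lambda_+\bigr)\bigl(\frac{d}{ds}-\lambda_-\bigr)$. Since the equilibrium of the limiting autonomous equation is asymptotically stable, a standard Lyapunov/Gronwall estimate on the planar system $(w,w')'=A(w,w')+G$, with $\sigma(A)=\{\lambda_\pm\}$ and $|G|\le C(e^{-2s}+|w|)\,|(w,w')|$, first gives a crude bound $|w(s)|+|w'(s)|\le Ce^{-\sigma_0 s}$ for some $\sigma_0\in(0,\rho)$ and all $s\ge0$. I then bootstrap with the variation of constants formula: if $|w|+|w'|\le Ce^{-\sigma s}$ with $0<\sigma<\rho$, then $|F(s)|\le C\bigl(e^{-(2+\sigma)s}+e^{-2\sigma s}\bigr)$, and writing the solution by integrating the fast mode $\lambda_-$ forward from a base point and keeping the slow (resonant) mode $\lambda_+$ in the form $-\int_s^\infty e^{\lambda_+(s-\tau)}(\cdots)\,d\tau$ plus a multiple of $e^{\lambda_+ s}$ yields, after differentiating as well, $|w(s)|+|w'(s)|\le C(1+s)\bigl(e^{-\rho s}+e^{-2\sigma s}+e^{-(2+\sigma)s}\bigr)$; that is, decay at the improved rate $\min\{\rho,2\sigma,2+\sigma\}$, with a harmless factor $(1+s)$ only in the resonant case $2\sigma=\rho$. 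Iterating finitely many times — each step improves the rate by a definite amount until it reaches $\rho$, and one further step removes any polynomial factor since then $|w|^2$ decays strictly faster than $e^{\lambda_+ s}$ — produces $|w(s)|\le Ce^{-\rho s}$, which is~\eqref{feLemCV1:3} after undoing the change of variables.

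The main obstacle is the bookkeeping in this last bootstrap: one has to split the two exponential modes correctly — integrating the faster mode $\lambda_-$ forward and treating the slower, resonant mode $\lambda_+=-\rho$ by an integral from $+\infty$ — and keep track of the resonance $2\sigma=\rho$, where a spurious polynomial factor appears and must be absorbed by one extra iteration using the quadratic nature of the nonlinearity. By contrast, the change of variables, the energy argument giving $v'\to0$, and the linearization (including the identity $\gamma^2-4\alpha\beta=16\Lambda$ and $\lambda_+=-\rho$, which come straight from Lemma~\ref{eNUM1}) are routine.
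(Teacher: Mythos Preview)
Your proposal is correct and follows essentially the same route as the paper: the change of variables $u(r)=r^{-2/\alpha}v(-\log r)$, the linearization $z=v-\beta^{1/\alpha}$ yielding $z''+\gamma z'+\alpha\beta z=h$, a first crude exponential decay, and then a bootstrap via the variation of constants formula with the two real roots $-2\mu_1=-\rho$ and $-2\mu_2$.

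The only (minor) differences are in presentation. For the initial crude rate the paper substitutes $z=e^{-2\varepsilon s}\psi$ and derives an explicit energy inequality for $\psi$ (choosing $\varepsilon>0$ small so that~\eqref{feLemCV1:7} holds), whereas you invoke asymptotic stability of the sink for the planar system; both yield $|z|+|z'|\le Ce^{-\varepsilon s}$. In the bootstrap the paper integrates \emph{both} fundamental solutions forward from a base point $s_0$ (see~\eqref{feLemCV1:4}--\eqref{feLemCV1:5}) and reads off the trichotomy~\eqref{feLemCV1:12}, while you propose integrating the slow mode $\lambda_+=-\rho$ from $+\infty$; these are equivalent formulations, and your treatment of the resonance $2\sigma=\rho$ (temporary factor $(1+s)$, removed by one extra iteration) matches the paper's case $\widetilde\nu=2\mu_1$ followed by a further step. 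Incidentally, your computation of the transformed equation gives the perturbative coefficient $-\tfrac{a}{2}e^{-2s}$ rather than the paper's $-ae^{-2s}$; either way the term is $O(e^{-2s})|v'|$ and irrelevant to the argument.
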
 

\begin{proof} 
Setting
\begin{equation*} 
u (r) = r^{- \frac {2} {\alpha }} v(s), \quad s= - \log r, 
\end{equation*} 
the equation for $v$ is
\begin{equation*}
v '' + ( \gamma - a e^{-2s} ) v' - \beta v +  |v|^\alpha v=0
\end{equation*} 
where 
\begin{equation*}
\gamma = \frac {4} {\alpha }- N + 2 >0. 
\end{equation*} 
Let 
\begin{equation*} 
v= \beta ^{\frac {1} {\alpha }} + z ,
\end{equation*} 
and set 
\begin{equation*} 
g(s)=  |s|^\alpha s,\quad \varphi (s)= g( \beta ^{\frac {1} {\alpha }} + s) - g (\beta ^{\frac {1} {\alpha }} ) - g' (\beta ^{\frac {1} {\alpha }} ) s.
\end{equation*} 
It follows that 
\begin{equation*} 
z'' + \gamma z' + \alpha \beta z= h, 
\end{equation*} 
where
\begin{equation} \label{fDefnH1} 
h = a e^{-2s} z'  - \varphi (z) . 
\end{equation} 
Note that the fundamental solutions of the linear equation $z'' + \gamma z' + \alpha \beta z=  0$ are 
\begin{equation*} 
\phi _1 (s) = e^{- 2 \mu _1 s}, \quad \phi _2 (s) = e^{- 2 \mu _2 s}, 
\end{equation*} 
where $\mu _1, \mu _2$ are given by Lemma~\ref{eNUM1}. 
By the variation of the parameter formula, we deduce that 
\begin{equation} \label{feLemCV1:4}
z(s)=   \Bigl( C_1 + \frac {1} {2(\mu _2- \mu _1) } \int  _{ s_0 } ^s e^{2\mu _1 \sigma } h  \xdif \sigma  \Bigr) e^{- 2 \mu _1 s}  +  \Bigl( C_2 -  \frac {1} {2(\mu _2- \mu _1) } \int  _{ s_0 } ^s e^{2\mu _2 \sigma } h \xdif \sigma  \Bigr) e^{- 2 \mu _2 s} ,
\end{equation} 
for some appropriate constants $C_1, C_2$, and
\begin{equation} \label{feLemCV1:5}
z' (s)=  -2\mu _1 \Bigl( C_1 + \frac {1} {2(\mu _2- \mu _1) } \int  _{ s_0 } ^s e^{2\mu _1 \sigma } h  \xdif \sigma  \Bigr) e^{- 2 \mu _1 s}  -2\mu _2  \Bigl( C_2 -  \frac {1} {2(\mu _2- \mu _1) } \int  _{ s_0 } ^s e^{2\mu _2 \sigma } h  \xdif \sigma  \Bigr) e^{- 2 \mu _2 s} .
\end{equation} 
We first prove that there exists $\varepsilon >0$ such that
\begin{equation} \label{feLemCV1:6}
 | z(s) | +  |z'(s)| \le C e^{- \varepsilon s}, \quad s\ge 0.
\end{equation} 
To see this, we fix $\varepsilon >0$ sufficiently small so that
\begin{equation} \label{feLemCV1:7}
\gamma -4\varepsilon >0, \quad \alpha \beta + 4\varepsilon ^2 - 2\gamma \varepsilon >0 ,
\end{equation} 
and we set
\begin{equation} \label{feLemCV1:8}
\eta _1 = \frac {\gamma -4\varepsilon} {2} >0, \quad \eta _2 = \frac { \alpha \beta + 4\varepsilon ^2 - 2\gamma \varepsilon } {2}>0 .
\end{equation} 
Setting 
\begin{equation} \label{feLemCV1:8:b1}
z (s) = e^{- 2 \varepsilon s} \psi (s), 
\end{equation} 
we obtain for $\psi $ the equation
\begin{equation*} 
\psi '' + 2\eta _1 \psi ' + 2 \eta _2 \psi  = 
a e^{-2 s} ( \psi ' - 2\varepsilon \psi ) - e^{ 2 \varepsilon s } \varphi (z)  . 
\end{equation*} 
Multiplying by $\psi '$, we obtain
\begin{equation*} 
\frac {d} {ds}  \Bigl( \frac {1} {2}  |\psi '|^2 + \eta _2 \psi ^2 \Bigr) + (2 \eta _1 - a e^{-2 s}  ) | \psi ' |^2 = - 2 \varepsilon a e^{-2 s}\psi \psi ' - e^{ 2 \varepsilon s } \varphi (z)  \psi ' .
\end{equation*} 
Now
\begin{equation*} 
 - 2 \varepsilon a e^{-2 s}\psi \psi ' - e^{ 2 \varepsilon s } \varphi  (z)  \psi ' \le \varepsilon a e^{-2 s}  \psi ^2 + \varepsilon a e^{-2 s}  | \psi ' |^2 
 + \eta _1  | \psi ' |^2  + \frac {1} {4 \eta _1} ( e^{ 2 \varepsilon s }  \varphi (z))^2. 
\end{equation*} 
We first choose $s_0$ sufficiently large so that 
\begin{equation*} 
a e^{-2 s} \le \eta _2\quad  \text{and} \quad \varepsilon a e^{-2 s} \le \eta _1 - a e^{-2 s}  \quad  \text{for }s\ge s_0,
\end{equation*} 
and we obtain
\begin{equation*} 
\frac {d} {ds}  \Bigl( \frac {1} {2}  |\psi '|^2 + \eta _2 \psi ^2 \Bigr)   \le \varepsilon \eta_2  \psi ^2  + \frac {1} {4 \eta _1} ( e^{ 2 \varepsilon s }  \varphi (z))^2. 
\end{equation*} 
Next we observe that there exists a constant $A$ such that 
\begin{equation} \label{feLemCV1:9}
  |\varphi (z)| \le A z^2, \quad  |z|\le 1.
\end{equation} 
Note that by~\eqref{feLemCV1:2} $ |z (s)| \to 0$ as $s\to \infty $, so that by possibly choosing $s_0$  larger we have $ |z(s)|\le 1$ for $s\ge  s_0 $. 
Therefore
\begin{equation*} 
 | e^{ 2 \varepsilon s } \varphi ( z ) | \le A e^{ 2 \varepsilon s } z^2 = A  |z| \,  |\psi | .
\end{equation*} 
Thus we see that
\begin{equation*} 
\frac {d} {ds}  \Bigl( \frac {1} {2}  |\psi '|^2 + \eta _2 \psi ^2 \Bigr)   \le \varepsilon \eta_2  \psi ^2  + \frac {A^2 z^2 } {4 \eta _1} \psi ^2.  
\end{equation*} 
By choosing $s_0$ possibly larger, we deduce from~\eqref{feLemCV1:2} that
\begin{equation*} 
\frac {A^2 z^2 } {4 \eta _1} \le \varepsilon \eta_2 \quad s\ge s_0,
\end{equation*} 
and we conclude that
\begin{equation*} 
\frac {d} {ds}  \Bigl( \frac {1} {2}  |\psi '|^2 + \eta _2 \psi ^2 \Bigr)   \le 2 \varepsilon \eta_2  \psi ^2 \le 2\varepsilon \Bigl( \frac {1} {2}  |\psi '|^2 + \eta _2 \psi ^2 \Bigr)  ,
\end{equation*} 
for $s \ge s_0$. Applying Gronwall's inequality, we deduce that
\begin{equation*} 
 |\psi | +  |\psi '| \le C e^{\varepsilon s},
\end{equation*} 
Using~\eqref{feLemCV1:8:b1}, the claim~\eqref{feLemCV1:6} follows.

We complete the proof by using formulas~\eqref{feLemCV1:4}-\eqref{feLemCV1:5} and a bootstrap argument.
We note that $\rho = 2\mu _1$ by Lemma~\ref{eNUM1}, so that~\eqref{feLemCV1:3} is proved if 
\begin{equation} \label{feLemCV1:3:b1} 
 | z(s) | +  |z'(s)| \le C e^{- 2 \mu _1 s}, \quad s\ge 0.
\end{equation} 
Suppose that 
\begin{equation} \label{feLemCV1:10}
 | z(s) | +  |z'(s)| \le C e^{- \nu s}, \quad s\ge 0,
\end{equation} 
for some $\nu >0$. 
It follows from~\eqref{fDefnH1}, \eqref{feLemCV1:10}  and~\eqref{feLemCV1:9} that
\begin{equation*} 
 |h | \le C e^{-  \widetilde{ \nu } },
\end{equation*} 
where
\begin{equation}  \label{feLemCV1:11}
 \widetilde{ \nu }= \min \{ \nu + 2 , 2\nu \} = \nu + \min \{  2 ,  \nu \}  > \nu .
\end{equation} 
It is not difficult to deduce from~\eqref{feLemCV1:4}-\eqref{feLemCV1:5} that
\begin{equation}  \label{feLemCV1:12}
 | z(s) | +  |z'(s)| \le 
 \begin{cases} 
C e^{-  \widetilde{ \nu } } &  \widetilde{ \nu } < 2\mu _1 ,\\
C (1+s) e^{- 2 \mu _1} & \widetilde{ \nu }= 2\mu _1 ,\\
C  e^{- 2 \mu _1}  & \widetilde{ \nu } > 2\mu _1 .
 \end{cases} 
\end{equation} 
We now conclude as follows. 
It follows from~\eqref{feLemCV1:6} that~\eqref{feLemCV1:10} holds for $ \widetilde{ \nu} =\varepsilon >0$.
If $\varepsilon >2\mu _1$, then the estimate~\eqref{feLemCV1:3:b1} follows from~\eqref{feLemCV1:12}.  
If $\varepsilon \le 2\mu _1$, then it follows from~\eqref{feLemCV1:12} that~\eqref{feLemCV1:10} holds with
\begin{equation*} 
\nu= \varepsilon + \frac {1} {2} \min \{  2 ,  \varepsilon  \}. 
\end{equation*} 
We now can iterate the above argument. In at most $\ell $ steps, where
\begin{equation*} 
\frac {\ell } {2} \min \{  2 ,  \varepsilon  \} > 2 \mu _1,
\end{equation*} 
we obtain the estimate~\eqref{feLemCV1:3:b1}. 
\end{proof} 

We are now in a position to prove Theorem~\ref{ePSSZpr1}.

\begin{proof} [Proof of Theorem~$\ref{ePSSZpr1}$]
Let $U$ be a radially symmetric, stationary solution of~\eqref{NLHE}  that is singular at $x=0$.
It follows that $U(x) = u(  |x|) $ where $u\in C^2(0,\infty )$, $u\not \equiv 0$, is a solution of~\eqref{fStat2} which is singular at $r=0$. We deduce from Proposition~\ref{eStat1}~\eqref{eStat1:2} that $r^{\frac {2} {\alpha }} u(r) \to \pm \beta ^{\frac {1} {\alpha }} $ as $r\to 0$. Therefore, we may apply Proposition~\ref{eLemCV1} with $a=0$, and we obtain that $u$ satisfies~\eqref{feLemCV1:3}.
Since $u$ is bounded as $r\to \infty $ by Proposition~\ref{eStat1}~\eqref{eStat1:z1}, it follows that $U$ satisfies~\eqref{ePSSZ2}, and also~\eqref{fPShrho} for all $0<t<1$ and $x\not = 0$, while~\eqref{fDfnUdz} is trivial with $U_0 = U$.  
Since $\partial _t U - \Delta U -  |U|^\alpha U=0 $ in ${\mathcal D}' ((0,T) \times \R^N )$ by Proposition~\ref{eStat1}~\eqref{eStat1:1}, the result now follows by applying Theorem~\ref{ePSSZ1} with $S=1$.
\end{proof} 

For the proof of Theorem~\ref{ePSSZpr2}, we will use the following proposition.

\begin{prpstn} \label{eLemCV2} 
Assume~\eqref{fStat1} and let $f\in \xCn{2} (0,\infty  ) $ be a solution of the equation~\eqref{fpr2}. 
It follows that 
\begin{equation} \label{feLemCV2:1} 
\ell = \lim  _{ r\to 0  } r^{\frac {2} {\alpha }} f(r) 
\end{equation} 
exists and either $\ell =0$ or else $\ell = \pm \beta ^{\frac {1} {\alpha }}$, where $\beta $ is defined by~\eqref{bta}; and
\begin{equation} \label{feLemCV2:1:b1} 
 \lim  _{ r\to 0  } r^{ 1 + \frac {2} {\alpha }} f'(r) = - \frac {2} {\alpha } \ell.
\end{equation} 
Moreover, 
\begin{equation} \label{feLemCV2:2} 
\mu  = \lim  _{ r\to \infty   } r^{\frac {2} {\alpha }} f(r) 
\end{equation} 
exists and is finite.
In addition, if  $U\in \xCn{2} ( (0, \infty ) \times ( \R^N \setminus \{ 0\} ))$ is defined by
\begin{equation} \label{b1:fPS4:b3} 
U  (t, x)= t^{- \frac {1} {\alpha }} f  \Bigl( \frac { |x|} {\sqrt t} \Bigr), \quad t>0, x\not = 0,
\end{equation}
then the following properties hold.
\begin{enumerate}[{\rm (i)}] 

\item \label{eLemCV2:1} 
$U\in \xLn{{\alpha + 1}} _\Loc ((0, S ) \times \R^N ) \cap \mathrm{C} (( 0,S) \times ( \R^N \setminus \{0\}  ) ) $ is a solution of~\eqref{NLHE} in ${\mathcal D}' ((0,T) \times \R^N )$.

\item \label{eLemCV2:2} 
If $U_0 (x)= \mu  |x |^{-\frac {2} {\alpha }}$, then
\begin{equation} 
U(t) \goto  _{ t\to 0 } U_0,
\end{equation} 
in $\xLn{1} _\Loc (\R^N ) $.

\item \label{eLemCV2:3} 
If~\eqref{fCondAlpha} holds and $\ell = \beta ^{\frac {1} {\alpha }}$ in~\eqref{feLemCV2:1}, then  $U$ satisfies~\eqref{fPShrho} for all $t>0$ and $x\not = 0$. 
\end{enumerate} 
Moreover, there exist a sequence $(\mu_n )  _{ n\ge 1 } \subset (0,\infty )$, $\mu _n\to \infty $ and a sequence $(f_n) _{ n\ge 1 }$ of sign-changing solutions of the equation~\eqref{fpr2} for which  
$\displaystyle \lim  _{ r\to 0  } r^{\frac {2} {\alpha }} f_n(r)  =\beta ^{\frac {1} {\alpha }}$
and
$\displaystyle \lim  _{ r\to \infty   } r^{\frac {2} {\alpha }} f_n (r) =\mu _n$.
\end{prpstn}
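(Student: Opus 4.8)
The plan is to derive the last assertion of Proposition~\ref{eLemCV2} directly from \cite[Theorem~1.3]{CDNW1} by a selection of indices. Recall that that theorem provides an integer $\overline m\ge 0$ and an increasing sequence $(\widetilde\mu_m)_{m\ge\overline m}\subset(0,\infty)$ with $\widetilde\mu_m\to\infty$, together with, for each $m\ge\overline m$, a solution $f_{(m)}\in\xCn{2}(0,\infty)$ of~\eqref{fpr2} having exactly $m$ zeros and satisfying $r^{\frac{2}{\alpha}}f_{(m)}(r)\to\beta^{\frac{1}{\alpha}}$ as $r\to0$ and $r^{\frac{2}{\alpha}}f_{(m)}(r)\to(-1)^m\widetilde\mu_m$ as $r\to\infty$ (existence of the latter limit being in any case already covered by~\eqref{feLemCV2:2} above).

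First I would observe that, since we demand the limit at infinity to be a positive number and the limit at the origin to equal $+\beta^{\frac{1}{\alpha}}$, we may not replace $f_{(m)}$ by $-f_{(m)}$ (that would flip the limit at $r=0$, even though equation~\eqref{fpr2} is odd); instead we discard the odd indices, for which $(-1)^m\widetilde\mu_m<0$. So I would fix an even integer $m_0\ge\max\{2,\overline m\}$, put $m_n=m_0+2(n-1)$ for $n\ge1$, and set $\mu_n=\widetilde\mu_{m_n}$ and $f_n=f_{(m_n)}$. Each $f_n$ then has exactly $m_n\ge2$ zeros, hence changes sign; it satisfies $r^{\frac{2}{\alpha}}f_n(r)\to\beta^{\frac{1}{\alpha}}$ as $r\to0$ by construction; and, because $m_n$ is even, it satisfies $r^{\frac{2}{\alpha}}f_n(r)\to(-1)^{m_n}\widetilde\mu_{m_n}=\mu_n$ as $r\to\infty$. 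Finally $\mu_n=\widetilde\mu_{m_n}>0$, and since $(\widetilde\mu_m)_{m\ge\overline m}$ is increasing with $m_n\to\infty$ we get $\mu_n\to\infty$, which delivers the claimed sequences $(\mu_n)_{n\ge1}$ and $(f_n)_{n\ge1}$.

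I do not anticipate any genuine analytic difficulty in this step: the substantive work — the shooting construction producing, for each prescribed number of zeros, a profile of~\eqref{fpr2} with the singular behavior $r^{\frac{2}{\alpha}}f(r)\to\beta^{\frac{1}{\alpha}}$ at the origin and a finite limit at infinity — is already carried out in \cite{CDNW1}. The only bookkeeping point to watch is the sign $(-1)^m$ of the limit at infinity in that theorem, which is precisely why one passes to the even-indexed subsequence rather than attempting a sign change of the profile.
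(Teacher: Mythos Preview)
Your proposal addresses only the final assertion of the proposition and says nothing about the other claims: the existence and values of the limit~\eqref{feLemCV2:1}, the derivative limit~\eqref{feLemCV2:1:b1}, the limit~\eqref{feLemCV2:2} at infinity, and Properties~\eqref{eLemCV2:1}--\eqref{eLemCV2:3}. The paper handles these by invoking \cite[Propositions~2.4, 3.1, 3.2, 3.3 and formula~(1.9)]{SoupletW} for the limits, by a direct integration-by-parts argument (passing to the profile $V(x)=f(|x|)$ and showing it solves~\eqref{fPR2} in ${\mathcal D}'(\R^N)$, then using \cite[Lemma~7.1]{CDNW1}) for Properties~\eqref{eLemCV2:1}--\eqref{eLemCV2:2}, and by Proposition~\ref{eLemCV1} combined with the bound at infinity for Property~\eqref{eLemCV2:3}. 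As written, your proof is incomplete for the full statement.

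For the final assertion itself, your argument is correct and is essentially the paper's: the paper simply cites \cite[Theorem~1.3]{CDNW1}. You are in fact more careful than the paper on one point: the theorem in \cite{CDNW1} yields $r^{\frac{2}{\alpha}}f_{(m)}(r)\to(-1)^m\widetilde\mu_m$ at infinity, so to obtain a \emph{positive} limit $\mu_n$ while keeping the singular behavior $\beta^{\frac{1}{\alpha}}$ at the origin (which rules out replacing $f$ by $-f$), one must indeed restrict to even $m$. Your parity-selection and the observation that $m_n\ge 2$ ensures sign change are both correct bookkeeping that the paper leaves implicit.
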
 

\begin{proof} 
The existence of the limit~\eqref{feLemCV2:1} follows from Propositions~3.2 and~3.3, and formula~(1.9) in~\cite{SoupletW}, 
then the limit~\eqref{feLemCV2:1:b1} follows from Proposition~3.1~(i) in~\cite{SoupletW}.
The existence of the limit~\eqref{feLemCV2:2} follows from Proposition~2.4 and formula~(1.9) in~\cite{SoupletW}.
Next, we show that $V(x)= U(1, x)= f(  |x| )$ is a solution of
\begin{equation} \label{fPR2} 
\Delta V + \frac {1} {2} x\cdot \nabla V + \frac {1} {\alpha } V +  |V |^\alpha V  = 0,
\end{equation} 
in ${\mathcal D}' (\R^N )$. Let  $\varphi \in \xCn{\infty } _\Comp (  \R^N)$ and $\varepsilon >0$. 
Since $  V \in  \xLn{{\alpha +1}} _\Loc (\R^N )$ by~\eqref{feLemCV2:1} and~\eqref{fCondAlpha}, we see that
\begin{equation} \label{fTPA62} 
 \Bigl| \int_{ \{  |x|<\varepsilon  \}} V    \Bigl( \Delta \varphi - \frac {1} {2} \nabla \cdot (x \varphi ) + \frac {1} {\alpha } \varphi  + | V |^\alpha  \varphi \Bigr) \Bigr|  \le C   \int_{ \{  |x|<\varepsilon  \}} ( |V | +  | V |^{ \alpha +1})\goto _{ \varepsilon \downarrow 0 } 0 .
\end{equation}  
Next, since $f\in \xCn{2} (0,\infty  ) $ is a solution of~\eqref{fpr2}, it follows that $V$ satisfies~\eqref{fPR2} in $\mathrm{C} ( \R^N \setminus \{ 0 \} )$, so that integration by parts yields
\begin{equation} \label{fTPA63} 
\int_{ \{  |x| >\varepsilon  \}} V \Bigl( \Delta \varphi - \frac {1} {2} \nabla \cdot (x \varphi ) + \frac {1} {\alpha } \varphi  + | V |^\alpha  \varphi \Bigr)       = -  \int_{ \{  |x|=\varepsilon  \}}  \Bigl( V  \frac {\partial \varphi } {\partial r}- \varphi \frac {\partial  V } {\partial r} -\frac {\varepsilon } {2}  V \varphi   \Bigr)    .
\end{equation} 
On the other hand, $r^{N-1} ( |   f (r) | +  |   f ' (r)|) \to 0$ as $r \downarrow 0$ by~\eqref{feLemCV2:1}, \eqref{feLemCV2:1:b1} and~\eqref{fCondAlpha}. Therefore, \begin{equation*} 
 \int_{ \{  |x|=\varepsilon  \}}  \Bigl(  |V |  + \Bigl|  \frac {\partial  V } {\partial r}  \Bigr| \Bigr)   \goto _{ \varepsilon \downarrow 0 } 0 ,
\end{equation*} 
and it follows from~\eqref{fTPA62} and~\eqref{fTPA63} that
\begin{equation*} 
\int_{\R^N}  V  \Bigl( \Delta \varphi - \frac {1} {2} \nabla \cdot (x \varphi ) + \frac {1} {\alpha } \varphi  + | V |^\alpha  \varphi \Bigr) \,  =0 . 
\end{equation*}  
Since $\varphi \in \xCn{\infty } _\Comp (  \R^N)$ is arbitrary, we see that $V$ solves~\eqref{fPR2}  in ${\mathcal D}' (\R^N )$.
Properties~\eqref{eLemCV2:1} and~\eqref{eLemCV2:2} now follows from~\cite[Lemma~7.1]{CDNW1}. 
Next, suppose~\eqref{fCondAlpha} and $\ell = \beta ^{\frac {1} {\alpha }}$ in~\eqref{feLemCV2:1}. It follows from Proposition~\ref{eLemCV1} with $a=1$ that $| r^{\frac {2} {\alpha }} f (r) - \beta ^{\frac {1} {\alpha }} |  \le C r^\rho$ for $r\le 1$. 
Moreover, it follows from~\eqref{feLemCV2:2}  that $| r^{\frac {2} {\alpha }} f (r) - \beta ^{\frac {1} {\alpha }} |  \le C$ for $r \ge 1$; and so,
\begin{equation*} 
| r^{\frac {2} {\alpha }} f (r) - \beta ^{\frac {1} {\alpha }} |  \le C  \Bigl( \frac {r} {r +1} \Bigr)^\rho , \quad r>0.
\end{equation*} 
Using~\eqref{b1:fPS4:b3}, this implies that 
\begin{equation*} 
| \,  |x|^\frac {2} {\alpha }U(t, x )-\beta ^{\frac {1} {\alpha }}| = 
\Bigl| \,   \Bigl(  \frac { |x|} {\sqrt t}  \Bigr) ^{\frac {2} {\alpha }} f \Bigl(  \frac { |x|} {\sqrt t}  \Bigr) - \beta ^{\frac {1} {\alpha }} \Bigr|  \le   C \Bigl(  \frac {  |x| } { |x|+ \sqrt t } \Bigr)^\rho  ,
\end{equation*}  
for $t>0$, $x\not = 0$.  
This proves Property~\eqref{eLemCV2:3}.
Finally, the last statement is proven in~\cite[Theorem~1.3]{CDNW1}.
\end{proof} 

\begin{proof} [Proof of Theorem~$\ref{ePSSZpr2}$]
Let $f\in \xCn{2} (0,\infty  ) $ be a solution of the equation~\eqref{fpr2}  having the singularity 
$r^{\frac {2} {\alpha }} f (r) \to  \beta ^{\frac {1} {\alpha }}$ as $r \to 0$. 
It follows from Proposition~\ref{eLemCV2} that the limit~\eqref{fEximu} exists and is finite, and that $U$ defined by~\eqref{fPS4:b3} satisfies~\eqref{ePSSZ2}, \eqref{fPShrho} (for all $t>0$, $x\not = 0$) and~\eqref{fDfnUdz} with $U_0 (x) = \mu  |x|^{- \frac {2} {\alpha }}$, and that $U$ is a solution of~\eqref{NLHE} in ${\mathcal D}' ((0,T) \times \R^N )$. 
The result now follows by applying Theorem~\ref{ePSSZ1} with $S=1$.
\end{proof} 

\appendix

\section{The heat equation with inverse square potential}
\label{sISP} 
In this section, we collect some properties of the homogeneous and nonhomogeneous heat equation with inverse square potential.
We assume~\eqref{fPShar}, and we consider the operator $H$ defined by~\eqref{fPShar:b1} and the corresponding semigroup $(e^{tH})  _{ t\ge 0 }$ on $\xLn{2} (\R^N ) $. 
We first recall some well-known properties of $H$.

\begin{lmm} \label{eLemSA1} 
Assume~\eqref{fPShar}, and let the operator $H$ on $\xLn{2 } (\R^N )$ be defined by~\eqref{fPShar:b1}.
\begin{enumerate}[{\rm (i)}] 

\item \label{eLemSA1:b1} 
$H $ is a self-adjoint, negative operator, and $D(H) \hookrightarrow  \xHn{1} (\R^N ) $ where $D(H) $ is equipped with the graph norm. 

\item \label{eLemSA1:b2} 
$H$ is the generator of a $C_0$ semigroup of contractions $(e^{t H}) _{ t\ge 0 }$, which is an analytic semigroup on $ \xLn{2 } (\R^N ) $.  

\item \label{eLemSA1:b3} 
Given $\DI \in \xLn{2 } (\R^N ) $, $u(t)= e^{tH} \DI$ satisfies $u\in \mathrm{C} ((0,\infty ), \xHn{1} (\R^N ) )$ and
\begin{equation} \label{feLemSA1:b3} 
 \| u(t) \| _{ \xHn{1} }\le C (1 + t^{- \frac {1} {2}} )  \| \DI \| _{ \xLn{2 } },
\end{equation} 
for all $t>0$, where the constant $C$ is independent of $\DI $.
\end{enumerate} 
\end{lmm}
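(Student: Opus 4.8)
The plan is to realize $H$ by the quadratic-form method and then read off all three statements from spectral theory. First I would introduce the form $Q(u)=\|\nabla u\|_{\xLn{2}}^2-\beta(\alpha+1)\int_{\R^N}|x|^{-2}|u|^2$ with form domain $\xHn{1}(\R^N)$, and invoke Hardy's inequality $\int_{\R^N}|x|^{-2}|u|^2\le \frac{4}{(N-2)^2}\|\nabla u\|_{\xLn{2}}^2$ together with~\eqref{fPShar} to obtain $\theta\|\nabla u\|_{\xLn{2}}^2\le Q(u)\le\|\nabla u\|_{\xLn{2}}^2$ with $\theta=1-4\beta(\alpha+1)/(N-2)^2\in(0,1)$. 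This shows $Q$ is nonnegative and that its form norm $(Q(u)+\|u\|_{\xLn{2}}^2)^{1/2}$ is equivalent to the $\xHn{1}$ norm, hence $Q$ is closed; the representation theorem for closed nonnegative forms then yields a unique nonnegative self-adjoint operator, call it $-H$, with $(-Hu,v)_{\xLn{2}}=Q(u,v)$ for $u$ in its domain and $v\in\xHn{1}(\R^N)$, which gives the self-adjointness and negativity of~\eqref{eLemSA1:b1}. To match the domain with the set in~\eqref{fPShar:b1}, I would observe that $|x|^{-2}u\in\xLn{1}_{\Loc}$ for every $u\in\xHn{1}(\R^N)$ (Hardy gives $|x|^{-1}u\in\xLn{2}$, and $|x|^{-1}\in\xLn{2}_{\Loc}$ since $N\ge3$), so that $\Delta u+\beta(\alpha+1)|x|^{-2}u$ is a well-defined distribution, and then $u$ belongs to the form-domain realization of $D(H)$ exactly when this distribution lies in $\xLn{2}$, in which case it equals $Hu$. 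The embedding $D(H)\hookrightarrow\xHn{1}(\R^N)$ is then immediate from $\theta\|\nabla u\|_{\xLn{2}}^2\le Q(u)=(-Hu,u)\le\frac12(\|Hu\|_{\xLn{2}}^2+\|u\|_{\xLn{2}}^2)$.

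Part~\eqref{eLemSA1:b2} is then a direct consequence of the spectral theorem: a self-adjoint operator bounded above by $0$ generates a $C_0$ semigroup of contractions that extends to a bounded holomorphic semigroup on every sector of half-angle less than $\pi/2$.

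For part~\eqref{eLemSA1:b3} I would work entirely with the spectral measure $\mu_{\DI}$ of $-H$ relative to $\DI$, which has total mass $\|\DI\|_{\xLn{2}}^2$. The estimate $\|u(t)\|_{\xLn{2}}\le\|\DI\|_{\xLn{2}}$ is the contraction property, and for the gradient one combines $\theta\|\nabla u(t)\|_{\xLn{2}}^2\le Q(u(t))=(-He^{tH}\DI,e^{tH}\DI)=\int_{(-\infty,0]}(-\lambda)e^{2t\lambda}\,d\mu_{\DI}(\lambda)$ with the elementary bound $\sup_{\lambda\le0}(-\lambda)e^{2t\lambda}=1/(2et)$; this is exactly the point where one gets the $t^{-1/2}$ rate of~\eqref{feLemSA1:b3} rather than the cruder $t^{-1}$. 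Continuity $u\in\mathrm{C}((0,\infty),\xHn{1}(\R^N))$ follows by combining strong $\xLn{2}$-continuity of the semigroup with dominated convergence applied to $\|(-H)^{1/2}(u(t)-u(t_0))\|_{\xLn{2}}^2=\int_{(-\infty,0]}(-\lambda)|e^{t\lambda}-e^{t_0\lambda}|^2\,d\mu_{\DI}(\lambda)$, where for $t\ge t_0/2$ the integrand is pointwise dominated by the constant $4/(et_0)$, integrable against the finite measure $\mu_{\DI}$; since $\theta\|\nabla w\|_{\xLn{2}}^2\le Q(w)=\|(-H)^{1/2}w\|_{\xLn{2}}^2$ for $w\in\xHn{1}(\R^N)$, this upgrades to $\xHn{1}$-continuity.

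None of the steps is genuinely deep. The only points that require a little care are the identification of $D(H)$ with the set in~\eqref{fPShar:b1} — which forces one to check local integrability of $|x|^{-2}u$ before the distributional equation is even meaningful — and the verification that the sharp blow-up rate in~\eqref{feLemSA1:b3} is $t^{-1/2}$, obtained by estimating $Q(u(t))$ directly via the spectral calculus rather than bounding $\|Hu(t)\|_{\xLn{2}}$.
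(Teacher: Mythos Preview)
Your proposal is correct and reaches the same conclusions, but the route differs from the paper's in two places worth noting. For part~\eqref{eLemSA1:b1} the paper does not invoke the representation theorem for closed forms; instead it works with the map $L:\xHn{1}\to\xHn{{-1}}$, uses Hardy to get coercivity $\langle -Lu,u\rangle\ge\nu\|\nabla u\|_{\xLn{2}}^2$, applies Lax--Milgram to solve $-Lu+u=f$, and concludes that $H$ is symmetric with $R(I-H)=\xLn{2}$, hence $m$-dissipative and self-adjoint. Your form-theoretic construction is cleaner in that self-adjointness is automatic from the representation theorem, while the paper's approach keeps the argument at the level of variational PDE and avoids citing the abstract form machinery. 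For part~\eqref{eLemSA1:b3} the difference is more substantive: the paper derives the $t^{-1/2}$ rate by the classical energy method---multiply $\partial_t u=Hu$ by $-Hu$ to see $s\mapsto(-Hu(s),u(s))_{\xLn{2}}$ is nonincreasing, then multiply by $u$ and integrate in time to get $2t(-Hu(t),u(t))\le\|\DI\|_{\xLn{2}}^2$, and finally pass from $\xCn{\infty}_\Comp$ data to general $\DI$ by density. Your spectral-calculus argument, based on $\sup_{\lambda\le0}(-\lambda)e^{2t\lambda}=1/(2et)$, is shorter and gives continuity in $\xHn{1}$ directly via dominated convergence, whereas the paper's energy approach is more hands-on and would transplant to settings without a spectral resolution. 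Both are standard; either would be acceptable here.
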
 

\begin{proof} 
Recall that 
\begin{equation} \label{fLSA1} 
 \Bigl\| \frac {u} { |\cdot |} \Bigr\|  _{ \xLn{2 } } \le \frac {2} {N-2}  \| \nabla u\| _{ \xLn{2 } },
\end{equation} 
and 
\begin{equation} \label{fLSA2} 
 \Bigl\| \frac {u} { |\cdot |^2} \Bigr\|  _{ \xLn{{\frac {2N} {N+2}}} } \le C \| \nabla u\| _{ \xLn{2 } },
\end{equation} 
for all $u\in \xHn{1} (\R^N ) $. Inequality~\eqref{fLSA1} is the standard Hardy estimate, and inequality~\eqref{fLSA2} is a Caffarelli-Kohn-Nirenberg inequality, see~\cite[inequality~(1.4)]{CaffarelliKN}.  
Since $\xLn{{\frac {2N} {N+2}}} (\R^N ) \hookrightarrow \xHn{{-1}} (\R^N ) $, it follows from~\eqref{fLSA2} that 
\begin{equation} \label{fLSA2:b1} 
L \in {\mathcal L} ( \xHn{1} (\R^N ) , \xHn{{-1}} (\R^N ) ),  \text{ where } 
Lu= \Delta u + \beta (\alpha +1)  |x|^{-2} u  .
\end{equation} 
Note that
\begin{equation*} 
Lu = Hu,\quad u\in D(H) .
\end{equation*} 
Since
\begin{equation} \label{fLSA3} 
\langle Lu , v\rangle  _{ \xHn{{-1}}, \xHn{1} } = - \int  _{ \R^N  }  \nabla u \cdot \nabla v +  \beta (\alpha +1) \int  _{ \R^N  } \frac { u v } {  |x|^2 }  ,
\end{equation} 
for all $u, v\in \xHn{1} (\R^N ) $,  it follows from~\eqref{fPShar} and~\eqref{fLSA1} that 
\begin{equation} \label{fLSA4} 
\langle Lu , u\rangle  _{ \xHn{{-1}}, \xHn{1} } = - \int  _{ \R^N  }  |\nabla u|^2 +  \beta (\alpha +1) \int  _{ \R^N  } \frac { |u|^2 } {  |x|^2 } \le - \nu  \int  _{ \R^N  }  |\nabla u|^2 ,
\end{equation} 
where $\nu = 1- \frac {4\beta (\alpha +1)}{(N-2)^2} >0$.
We deduce by Lax-Milgram's theorem that the map $u \mapsto - Lu +u$ is a homeomorphism $\xHn{1} (\R^N ) \to \xHn{{-1}} (\R^N )$. 
Let now $f\in \xLn{2} (\R^N ) $. It follows that there exists a unique $u\in \xHn{1} (\R^N ) $ such that $-Lu + u= f$. In particular, $Lu= u-f \in \xLn{2} (\R^N ) $, so that $u\in D(H) $ and $- Hu +u= f$. Thus we see that $R( I - H)= \xLn{2} (\R^N ) $. 
In addition, given $u,v\in D(H)$, we deduce from~\eqref{fLSA3} that
\begin{equation*} 
(H u, v)  _{ \xLn{2} }= \langle Lu , v\rangle  _{ \xHn{{-1}}, \xHn{1} } = \langle Lv , u\rangle  _{ \xHn{{-1}}, \xHn{1} } = (H v, u)  _{ \xLn{2} }.
\end{equation*} 
Thus we see that $H$ is symmetric. Moreover, it follows from~\eqref{fLSA4} that
\begin{equation*} 
(H u, u)  _{ \xLn{2} } \le 0,
\end{equation*} 
for all $u\in D(H)$. Therefore $H\le 0$ and $R( I-H) = \xLn{2} (\R^N ) $, so that $H$ is $m$-dissipative. 
Since $H$ is symmetric, we conclude that $H$ is a negative self-adjoint operator (see e.g.~\cite[Proposition~7.6 p.~193]{Brezis}).
In addition, it follows from~\eqref{fLSA4} that 
\begin{equation*} 
 \| u \| _{  \xHn{1 }  }^2 =  \|u \| _{ \xLn{2 } }^2 +  \|\nabla u\| _{ \xLn{2 } }^2 \le  \Bigl( 1+ \frac {1} {\nu} \Bigr) ( \|u \| _{ \xLn{2 } }^2 - (Hu, u) _{ \xLn{2 } }) \le 
 \Bigl( 1+ \frac {1} {\nu} \Bigr)  \|u\| _{ \xLn{2 } }  \|u \| _{ D(H) },
\end{equation*}  
 so that $D(H) \hookrightarrow  \xHn{1}(\R^N ) $.
This proves~\eqref{eLemSA1:b1}, and Property~\eqref{eLemSA1:b2} follows from the standard semigroup theory, see e.g.~\cite{Kato}, Chapter~IX, Theorem~1.24 and Example~1.25. 

Next, let $\DI \in \xCn{\infty } _\Comp (\R^N )$, so that $\DI \in D(H)$, and let $u(t) = e^{tH } \DI $, so that $u\in \mathrm{C} ([0, \infty ), D(H)) \cap \xCn{1} ([0, \infty ),  \xLn{2 } (\R^N ) ) \cap \xCn{\infty } ((0,\infty ), D(H))$ and $\partial _t u= H u$.  Multiplying the equation by $- Hu$, we obtain 
\begin{equation*} 
\frac {d} {dt} (- Hu, u) _{ \xLn{2 }  } = -  \|Hu \| _{  \xLn{2 }  }^2 \le 0,
\end{equation*} 
so that
\begin{equation} \label{fLSA5} 
(-H u(t), u(t) ) _{  \xLn{2 } } \le (-H u(s), u(s) ) _{ \xLn{2 } },\quad 0\le s\le t.
\end{equation} 
Next, multiplying the equation by $u$ and integrating on $(0,t) \times \R^N $,
\begin{equation*} 
\frac {1} {2}  \| u(t) \| _{ \xLn{2 } }^2 + \int _0^t (-Hu(s), u(s) ) _{ \xLn{2 } }  \xdif s = \frac {1} {2}  \| \DI \| _{ \xLn{2 } }^2,
\end{equation*} 
so that by~\eqref{fLSA5} 
\begin{equation*} 
2t (-H u(t), u(t) ) _{ \xLn{2 } } \le \| \DI \| _{ \xLn{2 } }^2 . 
\end{equation*} 
Using~\eqref{fLSA4} and the fact that $(e^{ t H }) _{ t\ge 0 }$ is a semigroup of contractions on $ \xLn{2 } (\R^N ) $, 
we conclude that~\eqref{feLemSA1:b3} holds. Since $\xCn{\infty } _\Comp (\R^N )$ is dense in $ \xLn{2 } (\R^N ) $, 
Property~\eqref{eLemSA1:b3} follows by a standard density argument. 
\end{proof} 

\begin{lmm} \label{eLemLHISP1} 
Given any $\DIbd \in  \xLn{2 } (\R^N ) $, $w(t)= : e^{tH} \DIbd $ satisfies $ w(t) \to \DIbd$ in $ \xLn{2 } (\R^N ) $ as $t\to 0$, and $w$ is a solution of
\begin{equation} \label{fLHISP} 
\partial _t w = \Delta w + \beta (\alpha +1)  |x|^{-2} w,
\end{equation} 
in $ \mathrm{C} ((0,T),  \xLn{2 } (\R^N ) )$, and in particular in ${\mathcal D}' ((0,\infty )\times \R^N)$. 
\end{lmm}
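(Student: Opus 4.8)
The plan is to obtain the lemma directly from the abstract properties of $H$ collected in Lemma~\ref{eLemSA1}, with essentially no new analysis. First, the $\xLn{2}$-convergence $w(t)=e^{tH}\DIbd\to\DIbd$ as $t\to 0$ is nothing but the strong continuity at the origin of the $C_0$ semigroup $(e^{tH})_{t\ge 0}$, so that $w\in\mathrm{C}([0,T),\xLn{2}(\R^N))$ with $w(0)=\DIbd$; this is Lemma~\ref{eLemSA1}~\eqref{eLemSA1:b2}. For the equation itself, I would invoke the analyticity of the semigroup (again Lemma~\ref{eLemSA1}~\eqref{eLemSA1:b2}): it gives $w(t)\in D(H)$ for every $t>0$, the map $t\mapsto w(t)$ of class $\xCn{\infty}((0,\infty),D(H))$, and $\partial_t w(t)=Hw(t)$ for $t>0$. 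Since, by the very definition~\eqref{fPShar:b1} of $H$, $Hw(t)=\Delta w(t)+\beta(\alpha+1)|x|^{-2}w(t)$ as an identity in $\xLn{2}(\R^N)$, and since $D(H)\hookrightarrow\xHn{1}(\R^N)$ by Lemma~\ref{eLemSA1}~\eqref{eLemSA1:b1}, this already yields~\eqref{fLHISP} as an identity in $\mathrm{C}((0,T),\xLn{2}(\R^N))$.

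The remaining task is to pass to the distributional formulation. Here the point to clarify is the meaning of $\Delta w(t)$: for $t>0$ we have $w(t)\in\xHn{1}(\R^N)$, hence $|x|^{-2}w(t)\in\xLn{ {\frac {2N} {N+2}} }(\R^N)\subset\xLn{1}_\Loc(\R^N)$ by the Caffarelli--Kohn--Nirenberg inequality~\eqref{fLSA2}, so that the distributional Laplacian of $w(t)$ coincides with the $\xLn{1}_\Loc$ function $Hw(t)-\beta(\alpha+1)|x|^{-2}w(t)$. Given a test function $\zeta\in\xCn{\infty}_\Comp((0,\infty)\times\R^N)$, I would then integrate the scalar identity $\frac{d}{dt}\int_{\R^N}w(t)\zeta(t)=\int_{\R^N}(Hw(t))\zeta(t)+\int_{\R^N}w(t)\,\partial_t\zeta(t)$ over $t\in(0,\infty)$ (the left-hand side integrates to $0$ by compactness of the support of $\zeta$), substitute $Hw(t)=\Delta w(t)+\beta(\alpha+1)|x|^{-2}w(t)$, and move the space derivatives onto $\zeta$ by integrating by parts in $x$, which is legitimate since $w(t)\in\xHn{1}(\R^N)$. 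This gives
\[
\int_0^\infty\int_{\R^N}w\,\bigl(-\partial_t\zeta-\Delta\zeta-\beta(\alpha+1)|x|^{-2}\zeta\bigr)\,\xdif x\,\xdif t=0,
\]
which is exactly~\eqref{fLHISP} in $\mathcal D'((0,\infty)\times\R^N)$.

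I do not expect a genuine obstacle here: once Lemma~\ref{eLemSA1} is in hand, the statement is a standard consequence of analytic-semigroup theory. The only place deserving a little attention is the identification, for $t>0$, of the distributional Laplacian of $w(t)$ with the $\xLn{2}$ function furnished by the definition of $H$, together with the attendant integration by parts near $x=0$; this rests on the local integrability of $|x|^{-2}w(t)$, which is precisely where the Caffarelli--Kohn--Nirenberg (equivalently Hardy) inequality~\eqref{fLSA2}---and hence, implicitly, the subcriticality condition~\eqref{fPShar}---enters. Everything else is routine.
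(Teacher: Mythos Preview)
Your proposal is correct and follows essentially the same approach as the paper's proof: the paper simply invokes the $C_0$ property for the convergence at $t=0$ and the analyticity of the semigroup for the equation in $\mathrm{C}((0,T),\xLn{2}(\R^N))$, then states ``in particular in $\mathcal D'$'' without further comment. Your additional paragraph spelling out the distributional formulation---using~\eqref{fLSA2} to ensure $|x|^{-2}w(t)\in\xLn{1}_\Loc$ so that the distributional Laplacian of $w(t)$ is an $\xLn{1}_\Loc$ function---is a welcome clarification of a point the paper leaves implicit, but it is not a different method.
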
 

\begin{proof} 
That $ w(t) \to \DIbd$ in $ \xLn{2 } (\R^N ) $ as $t\to 0$ follows from the fact that $(e^{tH}) _{ t\ge 0 }$ is a $C_0$ semigroup on $ \xLn{2 } (\R^N ) $. 
That the equation~\eqref{fLHISP} is satisfied in $\mathrm{C} ((0,T),  \xLn{2 } (\R^N ) )$ follows from the analyticity of the semigroup $(e^{tH} )  _{ t\ge 0 }$ on $\xLn{2 } (\R^N )$.
\end{proof} 

\begin{lmm} \label{eLemLHISP2} 
There exists $\varsigma >0$ such that 
\begin{equation*} 
 \Bigl\|  \frac {1} {h(t, x)}  \int  _{ \R^N  } \Kernel (t, x, y)   1 _{ \{  |y|>n \} } \DIbd  (y)  \xdif y  \Bigr\| _{ \xLn{\infty } (   |x| \le \frac {n} {2}) } \le   \Bigl(  1 + \frac {\sqrt t} {n} \Bigr)^\eta e^{-  \varsigma \frac {n^2} {t} }  \| \DIbd \| _{  \xLn{\infty } },
\end{equation*} 
for all $t>0$ and all $ \DIbd \in \xLn{\infty } (\R^N ) $.
\end{lmm}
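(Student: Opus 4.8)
The plan is to estimate the left-hand side pointwise for $|x|\le n/2$ by inserting the Gaussian heat kernel bound \eqref{fPS12}. Since that bound carries a factor $h(t,x)$ which cancels the prefactor $1/h(t,x)$, the left-hand side is at most
\begin{equation*}
A\, t^{-\frac N2}\int_{\{|y|>n\}} e^{-\frac{|x-y|^2}{\cstpa t}}\, h(t,y)\,|\DIbd(y)|\,\xdif y ,\qquad |x|\le \tfrac n2,
\end{equation*}
where $A,\cstpa>0$ are the constants in \eqref{fPS12} and $h$ is given by \eqref{fPSeta:2}. The key elementary observation is that if $|x|\le n/2$ and $|y|>n$, then $|x-y|\ge |y|-|x|>n/2$ and, simultaneously, $|x-y|\ge |y|-|x|>|y|/2$, hence $|x-y|^2\ge\tfrac18(n^2+|y|^2)$, so that
\begin{equation*}
e^{-\frac{|x-y|^2}{\cstpa t}}\le e^{-\frac{n^2}{8\cstpa t}}\; e^{-\frac{|y|^2}{8\cstpa t}} .
\end{equation*}

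Next I would use $h(t,y)=(1+\sqrt t/|y|)^\eta\le(1+\sqrt t/n)^\eta$ on $\{|y|>n\}$, and pull this factor, together with $e^{-n^2/(8\cstpa t)}$ and $\|\DIbd\|_{\xLn{\infty}}$, out of the integral. What remains is the Gaussian integral
\begin{equation*}
t^{-\frac N2}\int_{\R^N} e^{-\frac{|y|^2}{8\cstpa t}}\,\xdif y=(8\pi\cstpa)^{\frac N2},
\end{equation*}
a constant independent of $t$ and $n$. Collecting these estimates bounds the left-hand side by $A(8\pi\cstpa)^{N/2}\,(1+\sqrt t/n)^\eta\, e^{-n^2/(8\cstpa t)}\,\|\DIbd\|_{\xLn{\infty}}$, which gives the assertion with $\varsigma=\tfrac1{8\cstpa}$ after absorbing the fixed multiplicative constant $A(8\pi\cstpa)^{N/2}$ (this constant plays no role in the only application of this lemma, in the proof of Lemma~\ref{eLemSol1}, where it is merged into the generic constant $C$).

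I do not expect any genuine obstacle here: this is the same splitting-of-the-Gaussian device already used in the proof of Lemma~\ref{ePSS2}. The only point requiring a little care is the geometric inequality, which must bound $|x-y|$ from below by a multiple of $n$ (to produce the decay $e^{-\varsigma n^2/t}$) and, at the same time, by a multiple of $|y|$ (so that the leftover $y$-integral converges and contributes only a $t$- and $n$-independent constant); both inequalities follow immediately from $|x|\le n/2<|y|$.
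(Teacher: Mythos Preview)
Your proof is correct and follows essentially the same approach as the paper: both insert the kernel bound~\eqref{fPS12}, use $h(t,y)\le(1+\sqrt t/n)^\eta$ on $\{|y|>n\}$, and exploit the geometric inequality $|x-y|\ge|y|/2$ (for $|x|\le n/2<|y|$) to split the Gaussian into a factor $e^{-cn^2/t}$ and a factor whose $y$-integral is bounded independently of $t$ and $n$. The only cosmetic difference is that the paper first rescales via $z=x/n$, $\tau=t/n^2$ before applying the geometric bound, whereas you split the Gaussian directly in the original variables; your route is marginally more transparent. Your remark about the residual multiplicative constant $A(8\pi\cstpa)^{N/2}$ is well taken---the paper's proof also produces such a constant, and indeed the lemma as stated should carry a constant in front (which, as you note, is harmless in the application).
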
 

\begin{proof} 
It follows from~\eqref{fPS12} and~\eqref{fPSeta:2} that 
\begin{equation*} 
\begin{split} 
\frac {1} {h(t,x)} \int  _{ \R^N  } \Kernel (t, x, y)    1 _{ \{  |y|>n \} }  | \DIbd |  \, dy   & \le A  \| \DIbd \| _{ L^\infty  } t^{- \frac {N} {2}}  \int  _{   |y| >n }  e^{- \frac { |x-y| ^2 } { \cstpa  t}} h(t,y) \, dy \\ & \le \Bigl(   1 +  \frac {\sqrt t} {n} \Bigr)^\eta  A  \| \DIbd \| _{ L^\infty  } t^{- \frac {N} {2}}  \int  _{   |y| >n }  e^{- \frac { |x-y| ^2 } { \cstpa  t}}   dy   \\ & = \Bigl(  1 + \frac {\sqrt t} {n} \Bigr)^\eta  \Bigl(  \frac {4\pi} {a}  \Bigr)^{ \frac {N} {2}}  A  \| \DIbd \| _{ L^\infty  } e^{ \frac {\cstpa t} {4} \Delta }  1 _{ \{  |y|>n \}} .
\end{split} 
\end{equation*} 
Setting $z=\frac {x} {n}$ and $\tau =\frac {t} {n^2}$, we have  
\begin{equation*} 
( e^{ t \Delta }  1 _{ \{  |y|>n \} } ) (x) = ( e^{\tau \Delta }  1 _{ \{  |y|>1 \} } )  (z) =(4\pi  \tau )^{- \frac {N} {2}} \int  _{  |y|>1 } e^{ - \frac { |z-y|^2 } {4\tau }}.
\end{equation*} 
 If $|x|\le \frac {n} {2}$ then $|z|\le \frac {1} {2}$. For $ |y|\ge 1$ we have $  |z-y|\ge |y| -  |z| \ge  \frac { |y|} {2}$. Thus we see that for $ |x|\le \frac {n} {2}$
\begin{equation*} 
(4\pi )^{ \frac {N} {2}} ( e^{ t \Delta }  1 _{ \{  |y|>n \} } ) (x) =\tau ^{- \frac {N} {2}} \int  _{  |y|>1 } e^{ - \frac { |z-y|^2 } {4\tau }}\le  \tau ^{- \frac {N} {2}} \int  _{  |y|>1 } e^{ - \frac { | y|^2 } {16 \tau }} \le  e^{ - \frac {\varsigma } {\tau }}=e^{ - \frac {\varsigma n^2} {t}}
\end{equation*} 
for all $t>0$, where $\varsigma >0$. 
Hence the result follows.
\end{proof} 

\begin{lmm} \label{eLemLHISP4} 
For every $t>0$, the operator $ e^{t H} $ can be extended to a continuous operator $  \xLn{\infty}  (\R^N ) \to  \xLn{{\frac {2N} {N - 2}}} (\R^N ) + \xLn{\infty } (\R^N ) $.
Moreover, for  every $\DIbd \in \xLn{\infty } (\R^N )$, $ w(t) = e^{ t H} \DIbd $ satisfies the equation~\eqref{fLHISP}  in ${\mathcal D}' ((0,\infty )\times \R^N ) $, and
\begin{equation} \label{eLemLHISP4:1} 
 |w (t, x)| \le C \| \DIbd \| _{ \xLn{\infty }  }  h(t, x) .
\end{equation} 
In addition, $e^{ t H} \DIbd \to \DIbd$ in $\xLn{1} _\Loc  (\R^N ) $ as $t \to 0$. 
\end{lmm}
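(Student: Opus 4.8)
The plan is to work with the kernel representation throughout. For $\DIbd\in\xLn{\infty}(\R^N)$ and $t>0$ set
\begin{equation*}
w(t,x)=\int_{\R^N}\Kernel(t,x,y)\DIbd(y)\xdif y ,
\end{equation*}
an absolutely convergent integral since $\Kernel(t,x,\cdot)\in\xLn{1}(\R^N)$ by~\eqref{fPS12}; this extends $e^{tH}$ because $\Kernel(t,\cdot,\cdot)$ is the kernel of $e^{tH}$ on $\xLn{2}(\R^N)$. First I would record that, exactly as in the proof of Lemma~\ref{ePSS2} (substitute $y=\sqrt t\,z$ in~\eqref{fPS12} and use $\eta<N$), one has $\int_{\R^N}\Kernel(t,x,y)\xdif y\le C\,h(t,x)$; hence
\begin{equation*}
|w(t,x)|\le\|\DIbd\|_{\xLn{\infty}}\int_{\R^N}\Kernel(t,x,y)\xdif y\le C\,\|\DIbd\|_{\xLn{\infty}}\,h(t,x),
\end{equation*}
which is~\eqref{eLemLHISP4:1}. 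Since $h(t,x)\le 2^{\eta}\bigl(1+(\sqrt t)^{\eta}|x|^{-\eta}\bigr)$ and $2\eta<N-2$ by~\eqref{fPSeta}, the function $|x|^{-\eta}1_{\{|x|<1\}}$ lies in $\xLn{\frac{2N}{N-2}}(\R^N)$ and $|x|^{-\eta}1_{\{|x|\ge 1\}}$ in $\xLn{\infty}(\R^N)$; therefore $h(t,\cdot)$, and with it $w(t)$, belongs to $\xLn{\frac{2N}{N-2}}(\R^N)+\xLn{\infty}(\R^N)$ with norm controlled by a constant (depending on $t$) times $\|\DIbd\|_{\xLn{\infty}}$, which is the asserted continuity of the extension.

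To obtain~\eqref{fLHISP} in ${\mathcal D}'$ I would approximate. Put $\DIbd_n=\DIbd\,1_{\{|x|<n\}}\in\xLn{2}(\R^N)\cap\xLn{\infty}(\R^N)$, so that $w_n(t)=e^{tH}\DIbd_n$ solves~\eqref{fLHISP} in ${\mathcal D}'((0,\infty)\times\R^N)$ by Lemma~\ref{eLemLHISP1}, and $w_n(t,x)=\int_{\R^N}\Kernel(t,x,y)\DIbd_n(y)\xdif y$. Dominated convergence in the $y$-integral (the integrands being dominated by $\|\DIbd\|_{\xLn{\infty}}\Kernel(t,x,\cdot)\in\xLn{1}(\R^N)$) gives $w_n(t,x)\to w(t,x)$ for all $t>0$, $x\ne 0$, with the uniform bound $|w_n(t,x)|\le C\|\DIbd\|_{\xLn{\infty}}h(t,x)$. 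Both $h$ and $|x|^{-2}h$ are locally integrable on $(0,\infty)\times\R^N$ — for the second use $|x|^{-2}h(t,x)\le C(|x|^{-2}+|x|^{-2-\eta})$ with $2+\eta<\tfrac{N+2}{2}<N$ — so dominated convergence lets me pass to the limit $n\to\infty$ in the identity
\begin{equation*}
\int_0^{\infty}\!\!\int_{\R^N}w_n\bigl(-\partial_t\zeta-\Delta\zeta-\beta(\alpha+1)|x|^{-2}\zeta\bigr)=0,\qquad\zeta\in\xCn{\infty}_\Comp\bigl((0,\infty)\times\R^N\bigr),
\end{equation*}
and conclude that $w$ satisfies~\eqref{fLHISP} in ${\mathcal D}'$.

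The main obstacle is the convergence $e^{tH}\DIbd\to\DIbd$ in $\xLn{1}_\Loc(\R^N)$ as $t\to 0$, which I would handle by a near/far decomposition. Fix $R>0$ and write $\DIbd=\DIbd^{(1)}+\DIbd^{(2)}$ with $\DIbd^{(1)}=\DIbd\,1_{\{|x|<2R\}}$. Since $\DIbd^{(1)}\in\xLn{2}(\R^N)$, Lemma~\ref{eLemLHISP1} gives $e^{tH}\DIbd^{(1)}\to\DIbd^{(1)}$ in $\xLn{2}(\R^N)$, hence in $\xLn{1}(\{|x|<R\})$. The far part $\DIbd^{(2)}$ vanishes on $\{|x|<R\}$, and Lemma~\ref{eLemLHISP2} applied with $n=2R$ yields, for $|x|\le R$,
\begin{equation*}
\bigl|e^{tH}\DIbd^{(2)}(x)\bigr|\le h(t,x)\Bigl(1+\tfrac{\sqrt t}{2R}\Bigr)^{\eta}e^{-\varsigma(2R)^2/t}\,\|\DIbd\|_{\xLn{\infty}};
\end{equation*}
integrating over $\{|x|<R\}$ and using $\int_{\{|x|<R\}}h(t,x)\xdif x\le C_R$ for $t\le 1$ (again since $\eta<N$), this tends to $0$ as $t\to 0$ because of the factor $e^{-\varsigma(2R)^2/t}$. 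Adding the two contributions gives $\|e^{tH}\DIbd-\DIbd\|_{\xLn{1}(\{|x|<R\})}\to 0$, as required. The only delicate point is the behaviour of the far part near $x=0$, where $h(t,x)$ is unbounded, but there the super-exponentially small Gaussian tail from Lemma~\ref{eLemLHISP2} dominates.
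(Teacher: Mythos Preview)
Your proof is correct and follows essentially the same strategy as the paper: truncate to $\DIbd_n=1_{\{|x|<n\}}\DIbd\in\xLn{2}$, use Lemma~\ref{eLemLHISP1} for the equation, the kernel bound~\eqref{fPS12} for~\eqref{eLemLHISP4:1}, and Lemma~\ref{eLemLHISP2} to control the far part. The only organizational differences are that you define $w$ directly through the kernel integral and use dominated convergence (rather than building $w$ as a limit via a Cauchy estimate), and that your near/far decomposition for the initial data is a clean repackaging of the paper's argument that $w_n\to w$ uniformly in $\frac{1}{h}\,\xLn{\infty}$ combined with $w_n(t)\to\DIbd_n$ in $\xLn{2}$.
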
 

\begin{proof} 
Let $\DIbd \in \xLn{ \infty } (\R^N )$. 
Set $\DIbd ^n = 1 _{ \{  |x|< n\} } \DIbd \in \xLn{2} (\R^N ) $  and  $w_n (t) = e^{t H } \DIbd ^n$. 
It follows from Lemma~\ref{eLemLHISP2} that for $m\ge n$
\begin{equation} \label{eLemLHISP4:2} 
\frac {1} {h}  | w_n (t) - w_m (t)  |  \le \frac {1} {h}
   \int  _{ \R^N  } \Kernel (t, x, y)   | 1 _{ \{ n < |x|< m\} }  \DIbd   | \xdif y   
 \le  \frac {1} {h} 
   \int  _{ \R^N  } \Kernel (t, x, y)    1 _{ \{  |x| > n\} }  | \DIbd   | \xdif y  \goto  _{ n\to \infty  } 0
\end{equation} 
in $\xLn {\infty } ((0,T) \times \{ |x|< T \} )$ for every $T >0$. 
Since $h  \in \xLn{ \infty } ((0,T),  \xLn {{ \frac {2N} {N-2} }}  ( \{ |x|< T \} ))$, we conclude easily that there exists $w\in \xLn {{\frac {2N} {N-2}}}_\Loc  ((0,\infty ) \times \R^N ) $ such that $w_n \to w$ as $n\to \infty $ in $\xLn {\infty }((0,T),  \xLn{{ \frac {2N} {N-2} }}  ( \{ |x|< T \} ))$ for every $T >0$.     
Moreover, since $w_n (t) \to 1 _{ \{  |x|< n\} }  \DIbd  $ in $\xLn{2} (\R^N ) $ as $t\to 0$, by Lemma~\ref{eLemLHISP1},
we deduce that $w(t) \to \DIbd $ in $\xLn{1} _ \Loc (\R^N ) $ as $t \to 0$.
Finally, it follows from~\eqref{fPS12} that 
\begin{equation*} 
 |    w_n (t)  |  \le (4\pi)^{ \frac {N} {2}} h(t)  A e^{ \frac {\cstpa t} {4} \Delta } (  h(t)  |1 _{ \{  |x| < n\} }  \DIbd | )  \le C  \| \DIbd \| _{ \xLn{\infty }  } h(t)   e^{ \frac {\cstpa t} {4} \Delta } (  h(t) ) .
 \end{equation*} 
Note that $h(t, y) \le C( 1 + t^{\frac {\eta } {2}}  |x|^{- \eta} )$. Moreover, since $e^{ \frac {\cstpa t} {4} \Delta }  (  |x|^{- \eta} )\le C (t+  |x|^2) ^{- \eta } \le C  t^{ - \frac {\eta } {2}}  $ by~\cite[Corollary~8.3]{CazenaveDEW}, we see that $  e^{ \frac {\cstpa t} {4} \Delta } (  h(t) ) \le C $ and~\eqref{eLemLHISP4:1} follows.

Let now $\theta \in \xCn{\infty } _\Comp ((0, \infty ) \times \R^N )$ and let $T>0$ be sufficiently large so that 
\begin{equation} \label{eLemLHISP4:3} 
\Supp \theta \subset (0,T) \times \{  |x|< T \}. 
\end{equation} 
Since $\DIbd ^n  \in \xLn{2} (\R^N ) $, it follows from Lemma~\ref{eLemLHISP1} that $w_n $ is a solution of~\eqref{fLHISP}  in ${\mathcal D}' ((0,\infty )\times \R^N )$; and so,
\begin{equation*} 
\begin{split} 
\int  _0^\infty  \int  _{ \R^N  } w (- \partial _t\theta - \Delta \theta - |x|^{-2} \theta  )  =&  \int  _0^\infty  \int  _{ \R^N  } w_n (- \partial _t\theta - \Delta \theta - |x|^{-2} \theta  )  \\ &+ \int  _0^\infty  \int  _{ \R^N  } (w-w_n ) (- \partial _t\theta - \Delta \theta - |x|^{-2} \theta  ) \\ = & \int  _0^\infty  \int  _{ \R^N  } (w-w_n ) (- \partial _t\theta - \Delta \theta - |x|^{-2} \theta  ) .
\end{split} 
\end{equation*}  
On the other hand, $h\in \xLn{1} ((0,T) \times \{  |x|<T \}$, so it follows from~\eqref{eLemLHISP4:2} and~\eqref{eLemLHISP4:3} that
\begin{equation*} 
 \int  _0^T \int  _{ \R^N  } (w-w_n ) (- \partial _t\theta - \Delta \theta - |x|^{-2} \theta  ) \goto _{ n\to \infty  } 0.
\end{equation*} 
Therefore, 
\begin{equation*} 
 \int  _0^\infty  \int  _{ \R^N  } w (- \partial _t\theta - \Delta \theta - |x|^{-2} \theta  ) =0,
\end{equation*} 
showing that  $w$ satisfies~\eqref{fLHISP}  in ${\mathcal D}' ((0,\infty )\times \R^N )$.
\end{proof} 

\begin{lmm} \label{eLemLHISP5} 
Let $T>0$, $f\in \xLn{\infty }   ((0,T), \xLn{2}  (\R^N ) )$, and set
\begin{equation} \label{eLemLHISP5:1} 
w(t) = \int _0 ^t e^{ (t-s) H} f(s) \xdif s,
\end{equation} 
for $0\le t\le T$. It follows that $w \in  \mathrm{C} ( [0,T], \xHn{1} (\R^N ) ) \cap \xWn{{1, \infty }} ( (0,T), \xHn{{-1}} (\R^N ) )$ and that
\begin{equation} \label{eLemLHISP5:2} 
\partial _t w - \Delta w - \beta (\alpha +1)  |x|^{-2} w= f ,
\end{equation} 
in $\xLn{\infty } (( 0,T), \xHn{{-1}} (\R^N ) )$, and in particular in ${\mathcal D}' ( (0,T) \times \R^N )$. 
\end{lmm}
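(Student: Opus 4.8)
The plan is to identify $w$ with the mild solution of the abstract problem $w'=Hw+f$, $w(0)=0$, and to extract its regularity from the smoothing estimate~\eqref{feLemSA1:b3} together with the fact that, since $\xHn{1}(\R^N)$ is the form domain of $H$ (by Lemma~\ref{eLemSA1}~(i) and~\eqref{fLSA4}), the semigroup $(e^{tH})_{t\ge0}$ restricts to a $C_0$-semigroup of contractions on $\xHn{1}(\R^N)$ and extends to a $C_0$-semigroup on $\xHn{{-1}}(\R^N)$ whose generator is the bounded extension $L$ of $H$ from~\eqref{fLSA2:b1}; in particular $\tfrac1h(e^{hH}\varphi-\varphi)\to L\varphi$ in $\xHn{{-1}}(\R^N)$ as $h\to0$, for every $\varphi\in\xHn{1}(\R^N)$. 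All of this is standard spectral calculus: $(1-H)^{1/2}$ is an isometric isomorphism $\xHn{1}\to\xLn{2}$ and $\xLn{2}\to\xHn{{-1}}$ and commutes with $e^{tH}$, so the three realizations of the semigroup are conjugate.

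First I would check that~\eqref{eLemLHISP5:1} is well defined in $\xHn{1}$: by~\eqref{feLemSA1:b3}, $\|e^{(t-s)H}f(s)\|_{\xHn{1}}\le C(1+(t-s)^{-1/2})\|f\|_{\xLn{\infty}((0,T),\xLn{2})}$, which is integrable in $s\in(0,t)$, so the Bochner integral converges in $\xHn{1}$ and $\sup_{[0,T]}\|w(t)\|_{\xHn{1}}\le C(T)\|f\|_{\xLn{\infty}((0,T),\xLn{2})}$, with $w(0)=0$. For $w\in\mathrm{C}([0,T],\xHn{1})$, given $0\le t_1<t_2\le T$ I would split
\begin{equation*}
w(t_2)-w(t_1)=\int_{t_1}^{t_2}e^{(t_2-s)H}f(s)\,\xdif s+(e^{(t_2-t_1)H}-I)w(t_1);
\end{equation*}
the first term has $\xHn{1}$-norm at most $C\int_{t_1}^{t_2}(1+(t_2-s)^{-1/2})\,\xdif s\to0$, and the second tends to $0$ in $\xHn{1}$ because $w(t_1)\in\xHn{1}$ and $e^{hH}\to I$ strongly on $\xHn{1}$; the case $t_1=0$ gives continuity at the origin, and $t_2<t_1$ is symmetric.

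For the equation and the time regularity, I would use the semigroup law in the form $\int_0^t e^{(t+h-s)H}f(s)\,\xdif s=e^{hH}w(t)$ to write, for $h\ne0$,
\begin{equation*}
w(t+h)-w(t)=(e^{hH}-I)w(t)+\int_t^{t+h}e^{(t+h-s)H}f(s)\,\xdif s .
\end{equation*}
Dividing by $h$ and letting $h\to0$: since $w(t)\in\xHn{1}$, $\tfrac1h(e^{hH}-I)w(t)\to Lw(t)$ in $\xHn{{-1}}$; and writing $e^{(t+h-s)H}f(s)=e^{(t+h-s)H}(f(s)-f(t))+e^{(t+h-s)H}f(t)$, the contraction property and the $C_0$ property give $\tfrac1h\int_t^{t+h}e^{(t+h-s)H}f(s)\,\xdif s\to f(t)$ in $\xLn{2}$ at every Lebesgue point $t$ of $s\mapsto f(s)\in\xLn{2}$, i.e.\ for a.e.\ $t$ (for $h<0$ one first replaces $t$ by $t+h$ and $w(t+h)$ by $w(t)$ in the displayed identity). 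Hence $w$ is differentiable a.e.\ from $(0,T)$ into $\xHn{{-1}}$ with $w'(t)=Lw(t)+f(t)$. Moreover $\|w(t+h)-w(t)\|_{\xHn{{-1}}}\le h\,\|f\|_{\xLn{\infty}(\xLn{2})}+\|{\textstyle\int_0^h}Le^{\sigma H}w(t)\,\xdif\sigma\|_{\xHn{{-1}}}\le Ch$, using $\|e^{\sigma H}w(t)\|_{\xHn{1}}\le\|w(t)\|_{\xHn{1}}$ and $L\in{\mathcal L}(\xHn{1},\xHn{{-1}})$, so $w$ is Lipschitz into $\xHn{{-1}}$; therefore $w\in\xWn{{1,\infty}}((0,T),\xHn{{-1}})$ and~\eqref{eLemLHISP5:2} holds in $\xLn{\infty}((0,T),\xHn{{-1}})$, since $w\in\mathrm{C}([0,T],\xHn{1})$ makes $Lw\in\mathrm{C}([0,T],\xHn{{-1}})$ and $f\in\xLn{\infty}((0,T),\xLn{2})\hookrightarrow\xLn{\infty}((0,T),\xHn{{-1}})$. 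Testing~\eqref{eLemLHISP5:2} against $\zeta(t)\varphi(x)$ with $\zeta\in\xCn{\infty}_\Comp(0,T)$ and $\varphi\in\xCn{\infty}_\Comp(\R^N)$, using $\langle Lw(t),\varphi\rangle_{\xHn{{-1}},\xHn{1}}=\int_{\R^N}w(t)\,(\Delta\varphi+\beta(\alpha+1)|x|^{-2}\varphi)$ (integration by parts, legitimate since $w(t)\in\xHn{1}$ and $w(t)|x|^{-2}\in\xLn{{\frac{2N}{N+2}}}_{\Loc}$ by the Caffarelli--Kohn--Nirenberg inequality~\eqref{fLSA2}), rearranging, and using density of linear combinations of such tensor products in ${\mathcal D}((0,T)\times\R^N)$, yields the equation in ${\mathcal D}'((0,T)\times\R^N)$.

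I expect the main obstacle to be the bookkeeping between the three spaces $\xHn{1}\hookrightarrow\xLn{2}\hookrightarrow\xHn{{-1}}$ and the corresponding realizations of the semigroup and of $H$ (resp.\ its extension $L$), so that $(e^{hH}-I)w(t)$ can legitimately be differentiated in $\xHn{{-1}}$ even though $w(t)$ need not lie in $D(H)$, together with the vector-valued Lebesgue-point argument for $\tfrac1h\int_t^{t+h}e^{(t+h-s)H}f(s)\,\xdif s$; once~\eqref{feLemSA1:b3} and the form-domain description of $H$ are available, everything else is routine.
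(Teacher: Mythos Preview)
Your argument is correct and takes a genuinely different route from the paper's. The paper proceeds by approximation: it first treats $f\in\xCn{\infty}_\Comp((0,T)\times(\R^N\setminus\{0\}))$, where $f\in\mathrm{C}([0,T],D(H))$ and the classical theory (Pazy) gives $w\in\mathrm{C}([0,T],D(H))\cap\xCn{1}([0,T],\xLn{2})$ solving the equation in $\mathrm{C}([0,T],\xLn{2})$; then it uses~\eqref{feLemSA1:b3} to derive the a~priori bound $\|w\|_{\xLn{\infty}((0,T),\xHn{1})}\le C\|f\|_{\xLn{4}((0,T),\xLn{2})}$, approximates a general $f\in\xLn{\infty}((0,T),\xLn{2})$ by smooth $f_n$ in $\xLn{4}((0,T),\xLn{2})$, and passes to the limit in both the regularity and the equation. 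You instead work directly with the given $f$, exploiting that $\xHn{1}$ is the form domain of $H$ so that the semigroup is $C_0$ on $\xHn{1}$ and extends to a $C_0$-semigroup on $\xHn{{-1}}$ whose generator is $L$ with domain $\xHn{1}$; this lets you differentiate $(e^{hH}-I)w(t)$ in $\xHn{{-1}}$ for $w(t)\in\xHn{1}$ and run a difference-quotient/Lebesgue-point argument. The paper's approach is more elementary, needing only the classical result for regular data plus a stability estimate; yours is more direct and structural, avoiding approximation entirely but invoking the extrapolation of the semigroup to $\xHn{{-1}}$ (which, as you note, is immediate from the spectral calculus once the form domain is identified with $\xHn{1}$). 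Two minor wording points: $(I-H)^{1/2}$ is an isometric isomorphism from $\xHn{1}$ equipped with the \emph{form} norm to $\xLn{2}$, hence only a (uniformly bounded) isomorphism with the standard $\xHn{1}$ norm; likewise the semigroup is a contraction on $\xHn{1}$ in the form norm, merely uniformly bounded in the standard norm---this does not affect your argument.
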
 

\begin{proof} 
Suppose first $f\in \xCn{\infty } _\Comp ((0,T) \times (\R^N  \setminus \{0\}) ) $. It follows in particular that $f \in  \mathrm{C} ([0,T], D(H))$ so that $w\in \mathrm{C} ( [0,T], D(H)) \cap \mathrm{C} ^1 ( [0,T], \xLn{2} (\R^N ) )$ and equation~\eqref{eLemLHISP5:2} holds in $\mathrm{C} ([0,T], \xLn{2}  (\R^N ) )$, see e.g.~\cite[Chapter~4, Corollary~2.6]{Pazy}.
In particular, $w\in \mathrm{C} [0,T], \xHn{1} (\R^N ) )$ and, using~\eqref{feLemSA1:b3},
\begin{equation}  \label{eLemLHISP5:3} 
 \| w \| _{ \xLn{\infty }  ((0,T), \xHn{1} (\R^N ) ) } \le C  \int _0 ^t  (1 + (t-s)^{- \frac {1} {2}} )  \| f(s) \| _{ \xLn{2 } } \xdif s
  \le C  \| f \| _{ \xLn{4}  ((0,T), \xLn{2} (\R^N ) ) }. 
\end{equation} 
Let now $f\in \xLn{\infty }  ((0,T), \xLn{2}  (\R^N ) )$ and $( f_n) _{ n\ge 1 } \subset \xCn{\infty} _\Comp ((0,T) \times (\R^N  \setminus \{0\}) )$ such that $f_n \to f$ in $\xLn{4}  ((0,T), \xLn{2} (\R^N )  )$.
Let $w_n$ be given by~\eqref{eLemLHISP5:1} with $f$ replaced by $f_n$. 
It follows that $w_n \to w$ in $ \mathrm{C} ([0,T], \xLn{2} (\R^N ) )$. 
Moreover, we deduce from~\eqref{eLemLHISP5:3} with $f$ replaced by $f_n- f_m$ that $w_n$ is a Cauchy sequence in $ \mathrm{C} ([0,T], \xHn{1} (\R^N ) )$. Therefore, $w\in  \mathrm{C} ([0,T], \xHn{1} (\R^N ) ) $ and $w_n \to w$ in $\mathrm{C} ([0,T], \xHn{1} (\R^N ) )$. 
Using~\eqref{fLSA2:b1}, we deduce that 
\begin{equation*} 
\Delta w_n + \beta (\alpha +1)  |x|^{-2} w_n \to \Delta w + \beta (\alpha +1)  |x|^{-2} w,
\end{equation*} 
in $\mathrm{C} ([0,T], \xHn{{-1}} (\R^N ) )$. Since $\partial _t w_n = Hw_n + f_n$, we conclude that
\begin{equation*} 
\partial _t w_n \goto _{ n\to \infty  } \Delta w + \beta (\alpha +1)  |x|^{-2} w +f
\end{equation*} 
in $\xLn{4}  ((0,T) , \xHn{{-1}} (\R^N ) )$. On the other hand, $\partial _t w_n \to \partial _t w$ in ${\mathcal D} ' ((0,T), H^1 (\R^N ) )$, so that~\eqref{eLemLHISP5:2} holds in $\xLn{4} ((0,T) , \xHn{{-1}} (\R^N ) )$. Since all terms in~\eqref{eLemLHISP5:2}, except perhaps $\partial _t w$, belong to $\xLn{ \infty } ((0,T) , \xHn{{-1}} (\R^N ) )$, we see that $\partial _t w \in \xLn{ \infty } ((0,T) , \xHn{{-1}} (\R^N ) )$ and that~\eqref{eLemLHISP5:2} holds in $\xLn{ \infty } ((0,T) , \xHn{{-1}} (\R^N ) )$.
\end{proof} 

\begin{lmm} \label{eLemLHISP7} 
Let $T>0$, $\alpha >0$ and $u,f\in \xLn{\infty } _\Loc ( (0,T) \times (\R^N \setminus \{0\} )$ satisfy
\begin{equation*} 
\partial _t u - \Delta u =  f 
\end{equation*} 
in ${\mathcal D}' ( (0,T) \times ( \R^N \setminus \{0\} ))$. 
It follows that $u \in  \mathrm{C} ((0,T) \times ( \R^N \setminus \{0\})$. 
\end{lmm}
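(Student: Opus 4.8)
This is a purely local interior‑regularity statement for the heat equation; neither the puncture at the origin nor the parameter $\alpha$ (which does not even appear in the equation) plays any role. The plan is to fix an arbitrary point $(t_0,x_0)\in(0,T)\times(\R^N\setminus\{0\})$ and to exhibit $u$ near it as an explicitly continuous function.

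First I localize. Pick $\varphi\in\xCn{\infty}_\Comp((0,T)\times(\R^N\setminus\{0\}))$ with $\varphi\equiv1$ on a neighborhood of $(t_0,x_0)$, and set $w=\varphi u$, which is bounded with compact support since $u\in\xLn{\infty}_\Loc$ on the support of $\varphi$. Since $\nabla u$ is not a priori controlled, I rewrite the offending term $-2\nabla\varphi\cdot\nabla u$ as $-2\operatorname{div}(u\,\nabla\varphi)+2u\,\Delta\varphi$; using $\partial_tu-\Delta u=f$ one then obtains, after extension by zero, the identity in $\mathcal D'(\R^{N+1})$
\[
\partial_tw-\Delta w=g+\operatorname{div}F,\qquad g=\varphi f+(\partial_t\varphi)\,u+(\Delta\varphi)\,u,\quad F=-2u\,\nabla\varphi ,
\]
where $g\in\xLn{\infty}(\R^{N+1})$ and $F\in\xLn{\infty}(\R^{N+1})^N$ are bounded and supported in $\operatorname{supp}\varphi\subset\{t_1\le t<T\}$ for some $t_1>0$; $w$ itself is supported there as well.

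Next I represent $w$ by the heat kernel. Let $\Gamma$ be the heat kernel on $\R^N$, extended by $0$ for $t\le0$, and set $W=\Gamma*g+\sum_i(\partial_{x_i}\Gamma)*F^i=\Gamma*(g+\operatorname{div}F)$; this is well defined because $g,F$ have compact support, it satisfies $\partial_tW-\Delta W=g+\operatorname{div}F$, and it is supported in $\{t_1\le t\}$. Restricted to $0<t<T$ both $\Gamma$ and $\nabla_x\Gamma$ belong to $\xLn{1}(\R^{N+1})$ (one has $\|\nabla_x\Gamma(t,\cdot)\|_{\xLn{1}(\R^N)}=Ct^{-1/2}$, which is integrable at $t=0$); since $g,F\in\xLn{\infty}$, the two convolutions are then bounded and continuous on $(0,T)\times\R^N$ by continuity of translations in $\xLn{1}$. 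Hence $W$ is a bounded continuous function.

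Finally, $h:=w-W$ is a bounded function with $\partial_th-\Delta h=0$ in $\mathcal D'(\R^{N+1})$ and $h\equiv0$ for $t<t_1$. By hypoellipticity of the heat operator $h$ is smooth, and being bounded and vanishing for small $t$ it vanishes identically, by Tychonoff's uniqueness theorem for the Cauchy problem. Therefore $w=W$ is continuous; since $\varphi\equiv1$ near $(t_0,x_0)$, $u$ is continuous there, and as $(t_0,x_0)$ was arbitrary, $u\in\mathrm{C}((0,T)\times(\R^N\setminus\{0\}))$. The only step that requires any care is this last uniqueness argument — everything else is elementary estimation of the heat kernel; alternatively, the whole lemma follows from interior parabolic $\xLn{p}$‑estimates applied after mollifying $u$ and $f$ in space‑time, together with the embedding $W^{2,1}_p\hookrightarrow\mathrm{C}^0$ for $p>N+2$.
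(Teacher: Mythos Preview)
Your proof is correct, and you even anticipate the paper's own argument in your final sentence. The paper proceeds exactly as you sketch at the end: it mollifies $u$ and $f$ in space--time, applies interior parabolic $\xLn{p}$ estimates (citing Lieberman, Theorem~7.22) to show that the mollified sequence $u_n$ is Cauchy in $\xLn{\infty}$ on compact subsets, and concludes via the Sobolev embedding $\xWn{{1,p}}\hookrightarrow \mathrm{C}$ for $p>N+1$.

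Your main argument takes a genuinely different route: you localize with a cutoff, rewrite the equation as $\partial_t w-\Delta w=g+\operatorname{div}F$ with bounded compactly supported data, represent the solution by explicit convolution with the heat kernel and its gradient, and then identify $w$ with this continuous representation via Tychonoff uniqueness. This is more hands-on and avoids quoting parabolic $\xLn{p}$ theory, at the cost of invoking two other nontrivial facts --- hypoellipticity of the heat operator and Tychonoff's uniqueness theorem. (The hypoellipticity step could be replaced by mollifying $h$ directly, which would make the argument entirely elementary.) A small point you leave implicit: for the uniqueness step you need $W$, and hence $h$, bounded on $[t_1,\infty)\times\R^N$, not only on $(0,T)\times\R^N$; this follows immediately from $\|\Gamma(\tau,\cdot)\|_{\xLn{1}}=1$ and $\|\nabla_x\Gamma(\tau,\cdot)\|_{\xLn{1}}=C\tau^{-1/2}$ together with the compact time-support of $g,F$, but it is worth saying. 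In sum, the paper's proof is shorter if one accepts the Lieberman reference as a black box; yours is longer but more self-contained.
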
 

\begin{proof} 
This is standard interior parabolic regularity. For any $0 < \delta < \min\{ 1, \frac {T} {2} \}$, we define $ {\mathcal O}_\delta = (\delta , T- \delta ) \times \{ x\in \R^N ;\, \delta <  |x|< \frac {1} {\delta }\}$. 
Fix $p \in (N+1, \infty )$, so that $ \xWn{{1, p }} (  {\mathcal O}_\delta  ) \hookrightarrow \mathrm{C} (  \overline{ {\mathcal O}_\delta }  )$. Let $(\rho _n) _{ n\ge 1 }$ be a regularizing sequence with $\Supp \rho _n \subset \{ (t^2 +  |x|^2 )^{\frac {1} {2}}< \frac {1} {n} \}$. Fix $0 < \varepsilon  < \min\{ \frac {1} {2}, \frac {T} {4} \}$. It follows that for $n\ge \frac {1} {\varepsilon }$, $u_n=: \rho _n \star u$ and $f_n =: \rho _n \star f$, where the convolution is in $ \R^{1+ N} $, are well defined on $ {\mathcal O}_ \varepsilon $, and that $u_n \to u$ and $f_n \to f$ in $\xLn{p} (  {\mathcal O}_ \varepsilon  )$ as $n \to \infty $. Moreover, $u_n, f_n \in \xCn{\infty}  ( \overline{ {\mathcal O}_ \varepsilon }  )$ and $\partial _t u_n - \Delta u_n= f_n $ in $ {\mathcal O}_ \varepsilon $.
By parabolic interior regularity (see e.g.~\cite[Theorem~7.22]{Lieberman}), 
\begin{equation*} 
\begin{split} 
 \| u_n - u_m \| _{ \xLn{\infty }  ( {\mathcal O}_ {2\varepsilon } ) } & \le C \| u_n - u_m \| _{ \xWn{{1, p}} ( {\mathcal O}_ {2\varepsilon } ) } \\ & \le C ( \| f_n - f_m \| _{ \xLn{p} ( {\mathcal O}_ {\varepsilon } ) } + \| u_n - u_m \| _{ \xLn{p} ( {\mathcal O}_ {\varepsilon } ) } ) \goto _{ n,m \to \infty  } 0.
\end{split} 
\end{equation*} 
Therefore $u_n $ is a Cauchy sequence in $\xLn{\infty } ( {\mathcal O}_ {2\varepsilon } ) $, so that $u \in \mathrm{C} (  \overline{{\mathcal O}_ {2\varepsilon }}  ) $. The result follows by letting $\varepsilon \to 0$.
\end{proof}

\end{document}